\newtheorem{thm}{Theorem}
\newtheorem{defn}{Definition}
\newtheorem{prop}[thm]{Proposition}
\newtheorem{lem}[thm]{Lemma}
\newtheorem{cor}[thm]{Corollary}
\newtheorem{conj}[thm]{Conjecture}
\def\card{\mathrm{Card}}
\def\A{\mathcal{A}}
\newcommand{\EE}{\mathop{\hbox{\rm I\kern-0.17em E}}\nolimits}
\newcommand{\PP}{\mathop{\hbox{\rm I\kern-0.17em P}}\nolimits}
\newcommand{\E}{{\mathbb E}}
\newcommand{\N}{{\mathbb N}}
\newcommand{\Z}{{\mathbb Z}}
\renewcommand{\P}{{\mathbb P}}
\newcommand{\R}{{\mathbb R}}
\newcommand{\C}{{\mathbb C}}
\newcommand{\T}{{\mathbf T}}
\newcommand{\ind}{{\bf 1}}
\newcommand{\Card}{{\rm Card}}
\def\i{\mathbf{i}}
\def\ie{\textit{i.e.} }
\def\eg{\textit{e.g.} }
\def\sign{\mbox{sign}}
\title{Semi-infinite paths of the 2d-Radial Spanning Tree}
\author{Fran\c{c}ois Baccelli, David Coupier, Viet Chi Tran}
\date{\today}
\begin{document}

\maketitle
\begin{abstract}
We study semi-infinite paths of the radial spanning tree (RST) of a Poisson point
process in the plane. We first show that the expectation of the number of intersection points between semi-infinite paths and the sphere with radius $r$ grows sublinearly with $r$. Then, we prove that in each (deterministic) direction, there exists with probability one a unique semi-infinite path, framed by an infinite number of other semi-infinite paths of close asymptotic directions. The set of (random) directions in which there are more than one semi-infinite paths is dense in $[0,2\pi)$. It corresponds to possible asymptotic directions of competition interfaces. We show that the RST can be decomposed in at most five infinite subtrees directly connected to the root. The interfaces separating these subtrees are studied and simulations are provided.
\end{abstract}

\noindent \textbf{Keywords:} stochastic geometry; random tree; semi-infinite path; asymptotic direction; competition interface % insert keywords separated by a semicolon

\noindent \textbf{AMS:} 60D05 % insert the primary Maths Subject Classification number in the first bracket
         % and the secondary ams number(s) in the second bracket
         % e.g. \ams{60E20}{49G03;49F10}

\section{Introduction}

In this paper, we are interested in semi-infinite paths of the 2d-\textit{Radial Spanning Tree} (RST) introduced in \cite{baccellibordenave}. Let us consider a homogeneous Poisson point process (PPP) $N$ on $\mathbb{R}^{2}$ (endowed with its usual Euclidean norm $|.|$) with intensity $1$. Throughout this paper, $N$ is considered in its Palm version: it a.s. contains the origin $O$. The RST is a random graph $\mathcal{T}(N)$ (or merely $\mathcal{T}$) defined as follows. Its vertex set is $N$. Its edge set $E$ contains each pair $\{X,Y\}$, $X,Y\in N$ and $X\not= O$ such that
\begin{equation}
\label{defRST}
|Y| < |X| \; \mbox{ and } \; N \cap B(O,|X|) \cap B(X,|X-Y|) = \emptyset
\end{equation}
(where $B(c,r)$ denotes the open ball with center $c$ and radius $r$). For any $X\in N\setminus\{O\}$, there is a.s. only one $Y\in N$ satisfying (\ref{defRST}). Among the vertices of $N\cap B(O,|X|)$, this is the closest to $X$. This vertex is denoted by $\A(X)$ and called the \textit{ancestor} of $X$. With an abuse of notation, for an edge $e$, we call ancestor of the edge the endpoint that is the ancestor of the other endpoint, which we call the descendant of the edge. The ancestor of $e$ is the endpoint of $e$ that is closer from $O$. With probability $1$, the graph $\mathcal{T}$ admits a tree structure (there is no loop) rooted at the origin $O$. For convenience, we set $\A(O)=O$.

A sequence $(X_{n})_{n\geq 0}$ of vertices of $N$ is a semi-infinite path of the RST $\mathcal{T}$ if, for any $n$, $X_{n}$ is the ancestor of $X_{n+1}$. A semi-infinite path $(X_{n})_{n\geq 0}$ has asymptotic direction $\theta\in [0,2\pi)$ if
$$
\lim_{n\to\infty} \frac{X_{n}}{|X_{n}|} = e^{\i\theta}
$$
(by identifying $\mathbb{R}^{2}$ with the complex plane $\mathbb{C}$).\\

Radial random trees in the plane have been studied in many other papers, including Pimentel \cite{pimentel}, Bonichon and Marckert \cite{bonichonmarckert} or Norris and Turner \cite{norristurner}. The main difficulty in the case of the RST, is that the local rule \eqref{defRST} used for selecting the ancestor implies complex dependences. The latter make it difficult to exhibit natural Markov processes and prevent a direct use of martingale convergence theorems or Lyapunov functions.

However, our study can rely on the following result of \cite[Theorem 2.1]{baccellibordenave} on the RST:

\begin{thm}
\label{HN1}
The following properties hold almost surely:\\
(i) Every semi-infinite path of $\mathcal{T}$ has an asymptotic direction;\\
(ii) For every $\theta\in[0,2\pi)$, there exists at least one semi-infinite path with asymptotic direction $\theta$;\\
(iii) The set of $\theta$'s in $[0,2\pi)$ such that there is more than one semi-infinite path with asymptotic direction $\theta$ is dense in $[0,2\pi)$.
\end{thm}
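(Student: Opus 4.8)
The plan is to treat the three statements in order, building the constructions for (ii) and (iii) on the angular control established for (i). Throughout, the guiding principle is that an edge of $\mathcal{T}$ issued from a vertex at distance $r$ from $O$ points, up to a fluctuation of size $O(1)$, towards the origin: along any semi-infinite path the radius grows linearly while the angle performs an almost centered, small-variance motion.

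For (i), I would fix a semi-infinite path $(X_n)_{n\ge 0}$ and write $X_n = r_n\, e^{\i\theta_n}$. Since $|\A(X)|<|X|$, the radii $r_n$ are strictly increasing, and as $N$ has no finite accumulation point $r_n\to\infty$; a short argument shows $r_n\gtrsim n$, because each ancestor step lowers the radius by an $O(1)$ amount with uniformly non-degenerate law, the local picture of the disk $B(O,|X|)$ near $X$ converging to a half-plane. First I would bound the one-step angular increment by $|\theta_{n+1}-\theta_n|\le C\,\ell_n/r_n$, where $\ell_n=|X_{n+1}-X_n|$ is the edge length, and derive a uniform exponential tail for $\ell_n$ from the emptiness condition in (\ref{defRST}). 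Crucially, by the reflection symmetry of the half-plane limit the tangential component of the edge is asymptotically centered, so that $\EE[\theta_{n+1}-\theta_n\mid\mathcal{F}_n]$ is a curvature correction of order $r_n^{-2}$ and the conditional variance is of order $r_n^{-2}$ as well. Since $\sum_n r_n^{-2}<\infty$, a decomposition into a summable drift plus a martingale with summable variance yields the a.s. convergence $\theta_n\to\theta$, which is the asymptotic direction.

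For (ii), fix $\theta\in[0,2\pi)$ and choose $Z_k\in N$ with $|Z_k|\to\infty$ and $Z_k/|Z_k|\to e^{\i\theta}$. Each ancestral line $Z_k,\A(Z_k),\A^2(Z_k),\dots,O$ is a finite path to the root, and since $\mathcal{T}$ restricted to any ball $B(O,R)$ is a.s. finite, a diagonal extraction produces a semi-infinite path $(X_n)$ arising as a limit of initial segments (oriented away from $O$). The transverse control from (i), applied uniformly, shows that beyond radius $R$ the path of $Z_k$ stays in a cone of half-angle $o(1)$ about direction $\theta$; passing to the limit forces the limiting path to have asymptotic direction exactly $\theta$. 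I would take care to verify that the limit path is genuinely infinite, which holds because the segments reach arbitrarily large radii. For (iii), I would exhibit in every angular interval $(\alpha,\beta)$ a direction carrying at least two semi-infinite paths, the mechanism being bifurcation: a vertex two of whose children root infinite subtrees $\mathcal{T}_1,\mathcal{T}_2$ produces, along the planar boundary between $\mathcal{T}_1$ and $\mathcal{T}_2$, a competition interface whose two flanking semi-infinite paths share a common asymptotic direction. By the translation invariance of the underlying point process and an ergodicity / $0$–$1$ argument, the event that some such interface points into a fixed cone has probability one, and rescaling shows these interface directions are dense; combining with (i)–(ii) gives a dense set of directions of multiplicity at least two.

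The main obstacle is the long-range dependence created by the local rule (\ref{defRST}): the successive ancestor choices are not independent, so the angular process in (i) is not literally a martingale, and the extraction and density arguments cannot invoke independence directly. The remedy, following \cite{baccellibordenave}, is to compare the behaviour of $\mathcal{T}$ far from $O$ with the stationary Directed Spanning Forest, whose half-plane symmetry supplies the centering and whose decorrelation supplies the effective independence needed to sum variances, pass to subsequential limits, and run the ergodic argument.
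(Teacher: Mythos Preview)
The paper does not prove Theorem~\ref{HN1}; it quotes it from \cite[Theorem~2.1]{baccellibordenave} and explains that the argument there follows Howard and Newman \cite[Section~2.3]{howardnewman}: one first proves a purely deterministic statement, namely that any tree in the plane satisfying a certain \emph{straightness} condition (a quantitative bound on how much a path can deviate angularly between two radii) automatically enjoys (i)--(iii); one then shows separately (\cite[Theorem~5.4]{baccellibordenave}) that the RST a.s.\ satisfies this straightness condition. So the probabilistic work is entirely concentrated in a geometric moment estimate on edge lengths and fluctuations, and (i)--(iii) then fall out simultaneously from a soft topological argument on straight trees.

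Your route is genuinely different: you try to prove (i) directly by treating $(\theta_n)$ as an approximate martingale and summing variances, then build (ii) by compactness/extraction and (iii) via competition interfaces. The extraction argument for (ii) and the bifurcation picture for (iii) are in fact close in spirit to what Howard--Newman do once straightness is known. The main gap is in (i). You need $\EE[\theta_{n+1}-\theta_n\mid\mathcal F_n]=O(r_n^{-2})$ and conditional variance $O(r_n^{-2})$, but there is no natural filtration $\mathcal F_n$ here with respect to which these increments are adapted and conditionally well-behaved: the ancestor rule (\ref{defRST}) makes the step from $X_n$ to $X_{n+1}$ depend on the whole of $N\cap B(O,|X_{n+1}|)$, so the process $(\theta_n)$ is not Markov and the ``curvature correction'' and centering are not available as conditional statements. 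You acknowledge this at the end and invoke the DSF comparison, but that comparison (Lemma~\ref{lemme:approximationDSF} in the paper) is local in a ball of radius $r^\alpha$ around a fixed point and does not furnish a global martingale structure along a random infinite path. The virtue of the Howard--Newman detour is precisely that it replaces this delicate conditional analysis by a single almost-sure geometric bound (straightness), after which (i)--(iii) are deterministic consequences with no further appeal to independence or martingales.
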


\noindent
This result is based on a clever method due to Howard and Newman \cite[section 2.3]{howardnewman} proving that the above properties hold for any deterministic tree that satisfies some straightness condition, which is shown to be a.s. satisfied for the RST (\cite[Theorem 5.4]{baccellibordenave}).\\

Understanding finer structural properties of radial random trees,
such as the asymptotic directions of their infinite branches or
the shape of the interfaces that separate subtrees, has been
a recurrent question. See for instance \cite{coupier,coupierheinrich1,coupierheinrich2} for results on the geodesics and interfaces of the last passage percolation tree. The aim of the present paper is to investigate these
questions in the RST.\\

Our first result concerns the number of intersection points between semi-infinite paths of the RST $\mathcal{T}$ and the sphere with radius $r$ centered at the origin; its expectation tends to infinity but slower than $r$ (Theorem \ref{theo:sublin}). The proof is based on the local approximation of the RST, far enough from the origin, by the Directed Spanning Forest (DSF). See \cite{baccellibordenave} for details. Moreover, it has been proved recently (\cite[Theorem 8]{coupiertran}) that there is no bi-infinite path in the DSF. This allows us to conclude.

Subsequently, we focus our attention on semi-infinite paths with deterministic directions. Proposition \ref{prop:<2} states that, for any given $\theta\in [0,2\pi)$, there is almost surely exactly one semi-infinite path with direction $\theta$. Let $\gamma_{0}$ be the one corresponding to $\theta=0$. Proposition \ref{prop:<2} provides a further description of the subtree of the RST made up of $\gamma_{0}$ and all the branches emanating from it.

Finally, we study the subtrees of the RST rooted at the children of $O$. To do it, each of these subtrees is painted with a different color. This process produces the \textit{Colored RST}. Each of these colored subtrees can be bounded or not. Since the origin can have at most 5 descendants with probability 1, there are at most 5 distinct unbounded subtrees rooted at $O$. We prove in Theorem \ref{arbresinfinis} that their number may be equal to $1$, $2$, $3$, $4$ or $5$ with positive probability. The border between two colored subtrees is called \textit{competition interface}. Any unbounded competition interface admits an asymptotic direction (Proposition \ref{prop:cvps}). This direction is random and corresponds to the one of (at least) two semi-infinite paths, as in Part (iii) of Theorem \ref{HN1}.

It is worth pointing out here that our proofs strongly rely on the planarity of the RST and the non-crossing property of its branches (see Lemma \ref{lemm:croisement} in appendix). They cannot be carried to an arbitrary dimension.\\

The paper is organized as follows. In Section \ref{section:sublinear}, the sublinear character of the expected number of intersection points between semi-infinite paths and the sphere with radius $r$ is established. Section \ref{section:directiondeterministe} contains results on semi-infinite paths with deterministic directions. The colored RST and competition intefaces are defined in Section \ref{section:coloredRST}. Finally, open questions and numerical studies are gathered in Section \ref{section:conjectures}.

\section{Sublinearity of the number of semi-infinite paths}
\label{section:sublinear}

Let $r$ be a positive real number. Let us denote by $\chi_{r}$ the number of intersection points of the sphere $S(O,r)=\{r e^{\i \theta},\theta\in [0,2\pi)\}$ with the semi-infinite paths of the RST. The main result of this section states that the expectation of $\chi_{r}$ is sublinear.

\begin{thm}
\label{theo:sublin}
The following limit holds:
$$
\lim_{r\to\infty} \E \Big(\frac{\chi_{r}}{r}\Big) = 0 ~.
$$
\end{thm}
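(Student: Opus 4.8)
The plan is to reinterpret $\chi_{r}$ combinatorially and then to reduce the estimate to a local property of the Directed Spanning Forest. First I observe that an edge $\{\A(X),X\}$ of $\mathcal{T}$ (with descendant $X$) lies on some semi-infinite path if and only if the subtree of $\mathcal{T}$ rooted at $X$ is infinite; since $\mathcal{T}$ is a.s. locally finite, by König's lemma this is equivalent to $X$ having an infinite line of descent. Thus, up to the contribution of edges whose chord meets $S(O,r)$ twice (negligible, as discussed below), $\chi_{r}$ equals the number of edges crossing $S(O,r)$ whose descendant has infinite descendance. The crucial point is monotonicity: if $X$ has infinite descendance then (being locally finite and unbounded) its subtree reaches every sphere $S(O,r+L)$, $L>0$. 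Hence, writing
\begin{equation*}
D_{r}^{L}=\#\Big\{\{\A(X),X\}\in E:\ [\A(X),X]\cap S(O,r)\neq\emptyset,\ \text{the subtree rooted at }X\text{ meets }\{|z|\geq r+L\}\Big\},
\end{equation*}
one has $\chi_{r}\leq 2\,D_{r}^{L}$ for every $L>0$. It therefore suffices to prove that $\lim_{L\to\infty}\limsup_{r\to\infty}\E(D_{r}^{L}/r)=0$.

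Next I would compute $\E(D_{r}^{L})$ with the Mecke formula. Writing the sum over descendants $X\in N$ and integrating out the added point, $\E(D_{r}^{L})=\int_{\R^{2}}\P_{x}(\mathcal{E}^{L}_{r}(x))\,dx$, where $\mathcal{E}^{L}_{r}(x)$ is the event that, after adding a point at $x$, the edge issued from $x$ crosses $S(O,r)$ and the subtree rooted at $x$ reaches radius $r+L$. Because the edge lengths of the RST have uniformly good tails (Baccelli--Bordenave), the integrand is concentrated on an annulus $\{\,|x|=r+O(1)\,\}$, so $\E(D_{r}^{L})$ is of order $r\,\bar p^{\,L}_{r}$, where $\bar p^{\,L}_{r}$ denotes a typical value of $\P_{x}(\mathcal{E}^{L}_{r}(x))$ for $x$ at radius $\approx r$. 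The problem is thus reduced to showing that this per-point probability stays small once $L$ is large, uniformly in $r$.

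This is where the local approximation of the RST by the DSF enters. Fix $L$ and let $r\to\infty$: after translating the origin to $x$ and rotating, the sphere $S(O,r)$ becomes, in any fixed window, a line, the relation ``closer to $O$'' becomes the directed relation toward a fixed direction, and the law of the RST restricted to the slab $\{r\leq|z|\leq r+L\}$ converges to that of the DSF restricted to a slab of width $L$ in the drift direction. Consequently $\bar p^{\,L}_{r}\to p^{DSF}_{L}$, where $p^{DSF}_{L}$ is the Palm probability that the typical point of the DSF is the descendant endpoint of an edge crossing the boundary line and has descendance extending a distance $L$ against the drift. The events ``descendance extends at least $L$'' decrease, as $L\to\infty$, to the event that the typical point has infinite descendance; by \cite[Theorem 8]{coupiertran} the DSF contains no bi-infinite path, which is exactly the statement that no point has infinite descendance, so $p^{DSF}_{L}\downarrow 0$. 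Taking first $r\to\infty$ and then $L\to\infty$ yields $\limsup_{r}\E(\chi_{r}/r)\leq 2\lim_{L}p^{DSF}_{L}=0$, and since $\chi_{r}\geq 0$ the claimed limit follows.

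The main obstacle is the passage from the local weak convergence of the RST to the DSF to the convergence of the density $\bar p^{\,L}_{r}\to p^{DSF}_{L}$ of expected counts: local convergence alone does not control expectations, so I would need a domination or uniform-integrability argument, based on quantitative tail bounds for edge lengths and for the radial extent of subtrees, to rule out contributions from atypically long edges and rare far-reaching branches and to justify restricting the Mecke integral to the annulus. A secondary technical point is the order of limits (one must send $r\to\infty$ for fixed $L$ before letting $L\to\infty$), together with the control of the few edges whose chord meets $S(O,r)$ twice; both are handled by the same tail estimates.
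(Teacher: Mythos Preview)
Your proposal shares the paper's two key ingredients---the local DSF approximation and the absence of bi-infinite paths in the DSF---and your monotonicity device (replace ``infinite descendance'' by ``descendance reaching radius $r+L$'') is exactly the idea behind the paper's Step~1, with your $L$ playing the role of their $R$. The decomposition, however, is different: the paper does not use Mecke, but instead writes $\E(\chi_r)=\pi r\,\E(\widetilde\chi_r)$ by rotational invariance (where $\widetilde\chi_r$ counts crossings of a fixed arc of length~$2$), and then applies Cauchy--Schwarz, $\E(\widetilde\chi_r)\le\sqrt{\E(\widetilde\chi_r^2)}\,\sqrt{\P(\widetilde\chi_r\ge1)}$. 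The uniform second-moment bound (their Lemma~\ref{lemme:moment2}) is a separate, somewhat involved geometric estimate on long edges crossing a short arc; it is what stands in for the domination you would need in the Mecke integral. If your route went through, it would bypass that lemma.

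There is, however, a genuine gap in your argument, and it is not quite the one you flag. The obstacle is not only passing from local convergence to convergence of \emph{integrated} densities; it is that your per-point event $\mathcal E^{L}_{r}(x)$ is itself \emph{not local}. The condition ``the subtree rooted at $x$ reaches radius $r+L$'' is not determined by the RST in any ball $B(x,M)$ of fixed size: every vertex of that subtree lies in the thin annulus $\{|x|\le|z|\le r+L\}$, but nothing prevents the descending path from wandering arbitrarily far in the \emph{angular} direction before gaining radial distance $L$. Hence the coupling of Lemma~\ref{lemme:approximationDSF} (which only matches $\mathcal T$ and $\mathcal T_{-e_x}$ on balls of fixed radius around $(r,0)$) does not by itself yield $\bar p^{\,L}_r\to p^{DSF}_L$; your sentence about convergence of ``the law of the RST restricted to the slab'' asserts more than what is available. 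The paper resolves precisely this point in its Step~1: from any semi-infinite path crossing the short segment $\{\tilde r\}\times[-1,1]$ it extracts the sub-path $(Z_0,\dots,Z_n)$ up to its \emph{first} exit from $B((\tilde r,0),R)$; by construction $Z_1,\dots,Z_{n-1}$ stay inside that ball, and an empty-region argument forces $Z_0,Z_n\in B((\tilde r,0),2R)$ with high probability, so the whole event becomes local and the DSF comparison is legitimate. To complete your route you would need an analogous localization---for instance, replace ``subtree reaches $r+L$'' by ``some descending path from $x$ first exits $B(x,L')$ via edges of length at most $K$'', which is determined by $N\cap B(x,L'+2K)$ and is still implied (up to an edge-length tail) by the original event---before invoking the DSF limit.
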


The idea of the proof is as follows. For $r>0$, we introduce the points $A_r=r e^{\i / r}$ and $B_r=r e^{-\i /r}$ of the sphere $S(O,r)$. In the sequel, we will denote by $[A_r,B_r]$ the line segment with extremities $A_r$ and $B_r$ and by $a(A_r, B_r)=\{re^{\i \theta}, \theta\in [-1/r,1/r]\}$ the arc of $S(O,r)$ with extremities $A_r$ and $B_r$ and containing the point $(r,0)$. This arc is by construction of length 2. We denote by $\widetilde{\chi}_{r}$ the number of intersection points between semi-infinite paths of the RST and $a(A_r, B_r)$. By the rotational invariance of the PPP $N$, $\E(\chi_r)=\pi r \ \E(\widetilde{\chi}_r)$. Hence, using an additional moment condition, the proof of Theorem \ref{theo:sublin} will be shown to amount to proving that
\begin{equation}
\label{step1}
\lim_{r\to\infty} \P(\widetilde{\chi}_r\geq 1) = 0 ~.
\end{equation}
To prove \eqref{step1}, notice that far enough from the origin, the RST can be locally approximated by a directed forest named Directed Spanning Forest (DSF, see \cite{baccellibordenave}). The DSF $\mathcal{T}_{-e_x}$ with direction $-e_x=-(1,0)$ is a graph built on the PPP $N$ and in which each vertex $X$ has as ancestor the closest point of $N$ among those with strictly smaller abscissa. This construction generates a family of trees, i.e. a forest, which bears similarities with other directed forests introduced in the literature (see e.g. \cite{athreyaroysarkar,FP,gangopadhyayroysarkar}). \\
As $r$ tends to infinity, the neighborhood of $(r,0)$ in the RST increasingly looks similar to the neighborhood of $O$ in the DSF. Hence, the probability $\P(\widetilde{\chi}_r\geq 1)$ that there exists an infinite path crossing $a(A_r,B_r)$ is close to that of having a path of the DSF crossing $\{0\}\times[-1,1]$ and very long in the direction $e_x$. Such a phenomenon is rare since the DSF is known to have a.s. only one topological end \cite{coupiertran}.\\

In order to prove Theorem \ref{theo:sublin}, we will need the two following lemmas whose proofs are deferred to the end of the section.

\begin{lem}
\label{lemme:moment2}
For any $r>0$, the number of edges of the RST that intersect an arc of $S(O,r)$ of length 1 has finite second order moment and moreover :
$$
\limsup_{r\rightarrow+\infty}\E(\widetilde{\chi}_r^2)<+\infty ~.
$$
\end{lem}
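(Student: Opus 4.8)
The plan is to bound $\widetilde{\chi}_r$ by the (larger) number of \emph{edges} of the RST that meet the arc, and then estimate the second moment of that count through the Mecke formula. Every intersection point of a semi-infinite path with $a(A_r,B_r)$ lies on an edge of the RST meeting the arc, and $a(A_r,B_r)$ splits into two sub-arcs of length $1$; so, up to an elementary bound $\E(\widetilde{\chi}_r^2)\le C\,\E(Z^2)$ coming from rotational invariance, it suffices to control
$$
Z \;=\; \sum_{X\in N}\ind\{[X,\A(X)]\cap I\neq\emptyset\},
$$
the number of RST edges meeting a fixed arc $I\subset S(O,r)$ of length $1$, uniformly for $r$ large. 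Note that $Z$ is exactly the quantity in the first assertion of the lemma, and it dominates the contribution of semi-infinite paths.

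First I would record the geometric core of the estimate. If $[X,\A(X)]$ meets $I$ at a point $P$, then, $P$ lying on the segment, $|X-\A(X)|\ge |X-P|\ge \dist(X,I)$; moreover the descendant is the farther endpoint, so $|X|\ge r$. Setting $R_X:=B\big(X,\dist(X,I)\big)\cap B(O,|X|)$ and using that $\A(X)$ is the nearest point of $N\cap B(O,|X|)$ to $X$, the crossing event forces $N\cap R_X=\emptyset$. Since $I$ sits at radius $r$ and $|X|\ge r$, the ball $B\big(X,\dist(X,I)\big)$ is centred on $S(O,|X|)$, so a fixed fraction of it lies inside $B(O,|X|)$ in the relevant regime (local flatness of the sphere, the far angular cases being superexponentially rare); this gives $|R_X|\ge c\,\dist(X,I)^2$ with $c>0$ uniform over $|X|\ge r$ for $r$ large. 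Hence each crossing event has probability at most $e^{-c\,\dist(X,I)^2}$.

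For the first moment, the Mecke formula yields $\E(Z)=\int_{\R^2}\P(A_x)\,dx$, where $A_x$ is the crossing event for the edge issued from an added point $x$; the integrand is $\le e^{-c\,\dist(x,I)^2}\ind\{|x|\ge r\}$, whose integral over the exterior of $S(O,r)$ near $I$ is $O(1)$ (a Gaussian in the radial variable times the length of $I$). For the second-moment cross term I would use the two-point Mecke formula, $\E\big(\sum_{X\ne Y}\ind_X\ind_Y\big)=\iint \P(A_x\cap A_y)\,dx\,dy$, the ancestors now computed in $N\cup\{x,y\}$. The key point handling the dependence is that $A_x$ (resp. $A_y$) still forces $N\cap R_x=\emptyset$ (resp. $N\cap R_y=\emptyset$), since adding $y$ can only shorten the edge of $x$; therefore
$$
\P(A_x\cap A_y)\le \P\big(N\cap(R_x\cup R_y)=\emptyset\big)=e^{-|R_x\cup R_y|}\le e^{-|R_x|/2}\,e^{-|R_y|/2},
$$
using $|R_x\cup R_y|\ge\max(|R_x|,|R_y|)\ge\tfrac12(|R_x|+|R_y|)$. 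The double integral then factorises into $\big(\int e^{-|R_x|/2}\,dx\big)^2=O(1)$, uniformly in $r$.

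Combining the two terms gives $\limsup_{r\to\infty}\E(Z^2)<\infty$, hence the stated bound on $\E(\widetilde{\chi}_r^2)$; the finiteness of $\E(Z^2)$ at each fixed $r>0$ follows from the same integrals, the area lower bound $|R_x|\ge c_r\,\dist(x,I)^2$ holding with a positive $r$-dependent constant. The main obstacle is the cross term: the two crossing events are not independent, being driven by the same point process, and a naive union bound on probabilities would be too weak. The device above—translating each crossing into the emptiness of an explicit ball whose area dominates $\dist(\cdot,I)^2$, and bounding the area of the union from below by half the sum—is what restores a clean, integrable product-form estimate.
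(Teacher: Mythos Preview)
Your argument is correct and takes a genuinely different route from the paper's. Both proofs rest on the same geometric observation: an edge of the RST crossing the arc and with descendant $X$ far from it forces the lune $B(O,|X|)\cap B(X,\dist(X,I))$ to be empty of $N$, and this lune has area of order $\dist(X,I)^{2}$. What differs is how this is turned into a second-moment bound.

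The paper replaces the arc by a pair of tangent segments, splits the edge count into a ``close'' part (descendants within a fixed distance~$c$ of the segment, dominated by a Poisson count in a bounded set) and a ``far'' part, and then discretises the far descendants to nearby lattice points~$z$, showing that each contributes only if a deterministic lune $\Lambda(z,r)$ is empty. The second moment is then obtained by summing $\P(\Lambda(z,r)\cap N=\emptyset)$ and the corresponding pair sums over $z\in\Z^{2}$, exploiting a monotonicity in~$r$ to get uniformity.

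Your approach skips the tangent replacement and the lattice discretisation entirely, working directly with the arc and the two-point Slivnyak--Mecke formula. The cross term is handled by the neat inequality $|R_{x}\cup R_{y}|\ge \tfrac12(|R_{x}|+|R_{y}|)$, which factorises the bound and reduces everything to a single Gaussian-type integral $\int e^{-c\,\dist(x,I)^{2}/2}\,dx$. This is more streamlined and avoids the bookkeeping of the close/far split and the lattice map $X\mapsto z(X)$; the price is that one must invoke Mecke, which the paper's more elementary summation avoids. A small remark: your parenthetical about ``local flatness'' and ``far angular cases'' is unnecessary --- the lower bound $|B(X,\rho)\cap B(O,|X|)|\ge c\rho^{2}$ holds with a universal constant for any $X\in S(O,|X|)$ and $0<\rho\le 2|X|$ (a scaling argument reduces it to a continuous positive function on $(0,2]$), so no case distinction is needed.
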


Lemma \ref{lemme:approximationDSF} specifies how the RST is approximated by the DSF: far from the origin (around the point $(r,0)$) and locally (for the neighborhood of radius $r^\alpha$ of $(r,0)$). Notice that since the distribution of the DSF is invariant by translation along $e_x$, $\mathcal{T}_{-e_x}\cap B((r,0),r^\alpha)$ and $\mathcal{T}_{-e_x}\cap B(O,r^\alpha)$ have same distribution.

\begin{lem}
\label{lemme:approximationDSF}
Let $\mathcal{T}$ and $\mathcal{T}_{-e_x}$ respectively denote the RST and DSF of direction $-e_x$ constructed on the same PPP $N$.
Then, for $0<\alpha<1/3$:
\begin{equation}
\lim_{r\rightarrow +\infty} \P\big(\mathcal{T}\cap B((r,0),r^\alpha)=\mathcal{T}_{-e_x}\cap B((r,0),r^\alpha)\big)=1.
\end{equation}
The approximation also holds if we replace $r^\alpha$ by a constant radius $R$.
\end{lem}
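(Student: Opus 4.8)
The plan is to reduce the equality of the two graphs inside the ball to the equality, for every point $X$ of $N$ in a slightly enlarged ball, of its RST-ancestor $\A(X)$ and its DSF-ancestor $\A_{-e_x}(X)$ (its ancestor in $\mathcal{T}_{-e_x}$), and then to estimate the probability of a discrepancy pointwise by a local geometric comparison. Fix $X\in N$ near $(r,0)$ and write $X=(r+s,t)$. Then $\A(X)$ is the nearest neighbour of $X$ in the disk $D:=B(O,|X|)$, while $\A_{-e_x}(X)$ is the nearest neighbour of $X$ in the half-plane $H:=\{Y:Y_1<X_1\}$. Both $D$ and $H$ carry $X$ on their boundary, and the key point is that near $X$ these boundaries are close: the inward normal to $\partial D$ points towards $O$ and hence differs from the inward normal $-e_x$ to $\partial H$ by an angle $\approx t/r$, while $\partial D$ departs from its tangent line at curvature rate $1/|X|\approx 1/r$. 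Consequently $D\cap H$ is locally essentially a half-plane, and $D\triangle H$ is a thin lens around the vertical line through $X$.

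First I would record a deterministic sufficient condition for agreement. Let $\rho_0:=\dist\big(X,\,N\cap(D\cap H)\big)$. If there is no point of $N$ in $(D\triangle H)\cap B(X,\rho_0)$, then the nearest neighbour of $X$ in $D$, the one in $H$ and the one in $D\cap H$ all coincide, so $\A(X)=\A_{-e_x}(X)$. Hence a discrepancy at $X$ forces a point of $N$ in $(D\triangle H)\cap B(X,\rho_0)$. Since $D\cap H$ and $D\triangle H$ are disjoint, the radius $\rho_0$ (measurable with respect to the configuration in $D\cap H$) is independent of the configuration in $D\triangle H$; conditioning on $\rho_0$ and using that $D\cap H$ is locally a half-plane yields a Gaussian tail $\P(\rho_0>\rho)\le e^{-c\rho^2}$ uniformly in $X$. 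Writing $\partial D$ in coordinates $(u,v)$ centred at $X$ (with $v$ vertical), its equation is $u=-\,(tv+v^2/2)/|X|+o(\cdot)$, so the width of $D\triangle H$ at height $v$ is $\approx |tv+v^2/2|/|X|$ and $(D\triangle H)\cap B(X,\rho)$ has area $O\big((|t|\rho^2+\rho^3)/r\big)$, the tilt term $|t|\rho/r$ dominating the curvature term. Taking expectation over $\rho_0$ (with $\E[\rho_0^2],\E[\rho_0^3]=O(1)$) gives, uniformly for $X$ in the ball,
\[
\P\big(\A(X)\neq\A_{-e_x}(X)\big)\;\le\;\frac{C\,(|t|+1)}{r}\;=\;O\!\left(r^{\alpha-1}\right).
\]

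Next I would sum over the points of $N$ by the Mecke–Campbell formula: the expected number of discrepant vertices in $B((r,0),r^\alpha)$ is at most $\int_{B((r,0),r^\alpha)}\P_X\big(\A(X)\neq\A_{-e_x}(X)\big)\,dX$, where $\P_X$ is the Palm probability with a point added at $X$. With $X=(r+s,t)$ and $|t|\le r^\alpha$ this is $O\big(r^{-1}(r^{3\alpha}+r^{2\alpha})\big)=O(r^{3\alpha-1})$, which tends to $0$ exactly when $\alpha<1/3$; this is where the threshold in the statement originates. To upgrade agreement of ancestors to the stated equality $\mathcal{T}\cap B=\mathcal{T}_{-e_x}\cap B$ with $B:=B((r,0),r^\alpha)$, I would run the estimate on the enlarged ball $B((r,0),r^\alpha+L)$ with $L:=\log r$, and intersect with the high-probability event that every edge of either graph meeting a neighbourhood of $B$ has length at most $L$ (a consequence of the fast decay of edge lengths in the RST near $(r,0)$ and in the DSF). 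On this event every edge intersecting $B$ has both endpoints inside $B((r,0),r^\alpha+L)$, where all ancestors coincide, so the two graphs agree in $B$; since $L=o(r^\alpha)$ the bound $O(r^{3\alpha-1})$ is unchanged. The statement for a constant radius $R$ follows by monotonicity, as $\{\mathcal{T}\cap B((r,0),r^\alpha)=\mathcal{T}_{-e_x}\cap B((r,0),r^\alpha)\}\subseteq\{\mathcal{T}\cap B((r,0),R)=\mathcal{T}_{-e_x}\cap B((r,0),R)\}$ once $R\le r^\alpha$.

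The main obstacle I expect is making the local geometric comparison uniform and rigorous: controlling the area of the lens $D\triangle H$ together with the half-plane-type lower bound on $D\cap H$ simultaneously for all $X$ in the ball (including those with $|t|$ as large as $r^\alpha$, where the tilt of $\partial D$ is most pronounced), and converting the heuristic "nearest-neighbour" picture into the clean independence argument of the second paragraph. Once that is in place, the Mecke integration, the edge-length control and the bookkeeping producing the exponent $3\alpha-1$ are routine.
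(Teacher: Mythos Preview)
Your approach is correct and shares the paper's high-level strategy---bound $\P(\A(X)\neq\A_{-e_x}(X))$ pointwise for $X$ near $(r,0)$, then sum over the points of $N$ in the ball---but the execution differs in two places. First, you re-derive the pointwise discrepancy bound geometrically (the lens $D\triangle H$ has area $O((|t|\rho^2+\rho^3)/r)$, giving $\P(\A(X)\neq\A_{-e_x}(X))=O((|t|+1)/r)$), whereas the paper simply quotes this as \cite[Lemma~3.4]{baccellibordenave}. Second, for the summation you use the Mecke--Campbell formula to compute the expected number of discrepant vertices directly as $\int_B O((|t|+1)/r)\,dX=O(r^{3\alpha-1})$; the paper instead controls the \emph{number} of points by a Poisson concentration inequality (\cite[Lemma 11.1.1]{talagrand}), writing
\[
\P(\text{discrepancy})\le \P\big(N(B)>r^\beta\big)+r^\beta\sup_{X\in B}\P(\A(X)\neq\A_{-e_x}(X)),
\]
and then optimises over $\beta$, which yields the constraints $\beta>2\alpha$ and $\alpha+\beta<1$, hence $\alpha<1/3$. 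Your Mecke route is slightly cleaner and lands on the same threshold without an auxiliary parameter; the paper's route avoids Palm calculus at the cost of an extra concentration step. You are also more careful than the paper about the distinction between ``all ancestors in $B$ agree'' and ``$\mathcal{T}\cap B=\mathcal{T}_{-e_x}\cap B$'' as subsets of $\R^2$: the paper implicitly reads $\mathcal{T}\cap B$ as the edge set $\{(X,\A(X)):X\in N\cap B\}$, which makes the identity $\{\mathcal{T}\cap B\neq\mathcal{T}_{-e_x}\cap B\}=\bigcup_{X\in N\cap B}\{\A(X)\neq\A_{-e_x}(X)\}$ tautological, whereas your enlargement by $L=\log r$ plus edge-length control handles the geometric interpretation honestly.
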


\begin{proof}[Proof of Theorem \ref{theo:sublin}]
\textbf{Step 1:} Let us prove \eqref{step1}. First of all, notice that all the paths which intersect the arc $a(A_r, B_r)$ necessarily intersect the segment $[A_r,B_r]$ (the converse is not necessarily true). The segment $[A_r,B_r]$ is perpendicular to the horizontal axis, and all its points have abscissa $\tilde{r}=r \cos(1/ r)$. Its length is $2r \sin(1/r)\leq 2$. \\
Heuristically, the event $\{\widetilde{\chi}_r\geq 1\}$ (consisting in the existence of at least one semi-infinite path crossing $a(A_r, B_r)$) is hence close to the existence of a path of the RST crossing the vertical segment $[A_r ,B_r]$ and then surviving until a large radius.\\
For $R>0$, let us hence consider the event where there exists a path of the RST crossing $ \{\widetilde{r}\}\times [-1,1] \supset [A_r, B_r]$ before intersecting the sphere $S((\tilde{r},0),R)$.\\
Our purpose is to show that the probability of this event is close to the probability that in the DSF $\mathcal{T}_{-e_x}$, there exists a path intersecting $\{0\}\times [-1,1]$ and then $S(O,R)\cap \{x>0\}$. To show that such an approximation holds, let us prove that our event is local in the sense of Lemma \ref{lemme:approximationDSF}.\\

\begin{figure}[!ht]
\begin{center}
\psfrag{a}{\footnotesize{$Z_{0}$}}
\psfrag{b}{\footnotesize{$Z_{1}$}}
\psfrag{c}{\footnotesize{$Z_{n\!-\!1}$}}
\psfrag{d}{\footnotesize{$Z_{n}$}}
\psfrag{e}{\footnotesize{$S((\tilde{r},0),R)$}}
\psfrag{f}{\tiny{$-1$}}
\psfrag{g}{\tiny{$+1$}}
\includegraphics[width=7cm,height=6.5cm]{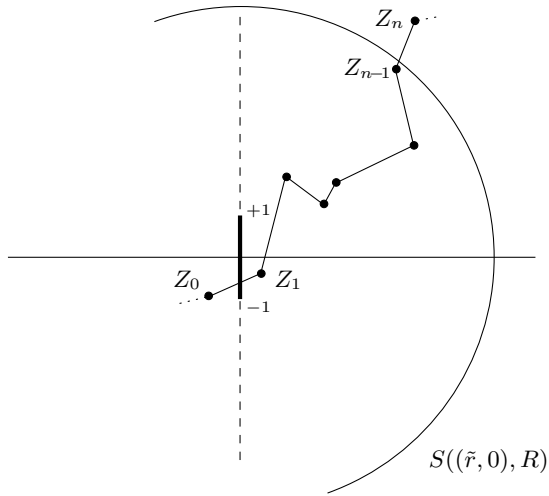}
\end{center}
\caption{\label{fig:Zn} {\small \textit{Here is the sub-path $Z_{0},\ldots,Z_{n}$ of the RST crossing the vertical segment $\{\widetilde{r}\}\times [-1,1]$ (in bold) and the sphere $S((\tilde{r},0),R)$. On this picture, $Z_{0}$ and $Z_{n}$ belong to $B((\widetilde{r},0),2R)$ which occurs with high probability.}}}
\end{figure}

Let us consider a path of the RST crossing $\{\widetilde{r}\}\times [-1,1]$ and afterwards $S((\tilde{r},0),R)$, i.e. towards descendants as is described below. From this path, we can extract a sub-path $(Z_{0},\ldots,Z_{n})$ crossing only once $\{\widetilde{r}\}\times [-1,1]$ (between $Z_{0}$ and $Z_{1}$) and $S((\tilde{r},0),R)$ (between $Z_{n-1}$ and $Z_{n}$). See Figure \ref{fig:Zn}. We show that this path is included in the ball $B((\widetilde{r},0),2R)$, so that the local approximation of the RST by the DSF (see Lemma \ref{lemme:approximationDSF}) holds. If $Z_{0}$ is outside the ball $B((\tilde{r},0),R)$, then $B(Z_{1},|Z_{1}-Z_{0}|)$ contains $B((\tilde{r},0),R/2)$ for $R$ large enough. Consequently, the set $B(O,\tilde{r})\cap B((\tilde{r},0),R/2)$ is empty of points of the PPP $N$. So, $Z_{0}$ belongs to $B((\tilde{r},0),R)$ with high probability as $r,R$ tend to infinity (with $r$ tending to infinity faster than $R$). The same is true about the endpoint $Z_{n}$ and the ball $B((\tilde{r},0),2R)$. To sum up, given $\varepsilon>0$ and $r,R$ large enough,
\begin{align}
\P( \widetilde{\chi}_r \geq 1 )  \leq & \; \P \left(\begin{array}{c}
\mbox{there exists a path of the RST crossing} \\
\mbox{$\{\tilde{r}\}\times[-1;1]$ and afterwards $S((\tilde{r},0),R)$,} \\
\mbox{whose endpoints belong to $B((\tilde{r},0),2R)$.}
\end{array}\right) + \varepsilon \nonumber\\
  \leq & \; \P \left(\begin{array}{c}
\mbox{there exists a path of the DSF crossing} \\
\mbox{$\{0\}\times[-1;1]$ and afterwards $S(O,R)$}
\end{array}\right) + 2 \varepsilon ~.\label{DSFcontresens}
\end{align}
Theorem 8 of \cite{coupiertran} says each path of the DSF is a.s. finite towards descendants. Then, the probability in the right-hand side of (\ref{DSFcontresens}) tends to $0$ as $R$ tends to infinity. This means that $\P(\widetilde{\chi}_r\geq 1)$ is smaller than $3\varepsilon$, i.e. (\ref{step1}).\\

\textbf{Step 2:} Let us prove that $\E(\chi_{r})$ is sublinear. As mentioned at the beginning of the section, it is sufficient to show $
\lim_{r\to\infty} \E (\widetilde{\chi}_r) = 0$.\\
Since by the Cauchy-Schwarz inequality:
$$
\E\big( \widetilde{\chi}_r\big) \leq \sqrt{\E \big(\widetilde{\chi}_r^{2}\big)} \sqrt{\P(\widetilde{\chi}_{r}\geq 1)} ~,
$$
the desired limit follows from (\ref{step1}) if we prove that $\limsup_{r\rightarrow+\infty}\E(\widetilde{\chi}_{r}^{2})$ is finite, which is the result of Lemma \ref{lemme:moment2}.
\hfill $\Box$ \end{proof}

The section ends with the proofs of Lemmas \ref{lemme:moment2} and \ref{lemme:approximationDSF}.

\begin{proof}[Proof of Lemma \ref{lemme:moment2}]
Let $A_r$ and $B_r$ be as in the proof of Theorem \ref{theo:sublin}. Let $W_r$ be the intersection point of the two tangents to $S(O,r)$ that pass through the points $A_r$ and $B_r$. See the left part of Figure \ref{fig:arc}.

\begin{figure}[!ht]
\begin{center}
\psfrag{O}{\footnotesize{$O$}}
\psfrag{A}{\footnotesize{$A_r$}}
\psfrag{B}{\footnotesize{$B_r$}}
\psfrag{Z}{\footnotesize{$W_r$}}
\psfrag{S}{\footnotesize{$S(O,r)$}}
\psfrag{a}{\scriptsize{$\mathcal{A}(X)$}}
\psfrag{b}{\footnotesize{$(r,0)$}}
\psfrag{c}{\scriptsize{$J(X)$}}
\psfrag{d}{\footnotesize{$I_{r}\oplus B(0,c)$}}
\psfrag{e}{\footnotesize{$z(X)$}}
\psfrag{f}{\footnotesize{$X$}}
\begin{tabular}{cp{1cm}c}
\includegraphics[width=5.5cm,height=5cm]{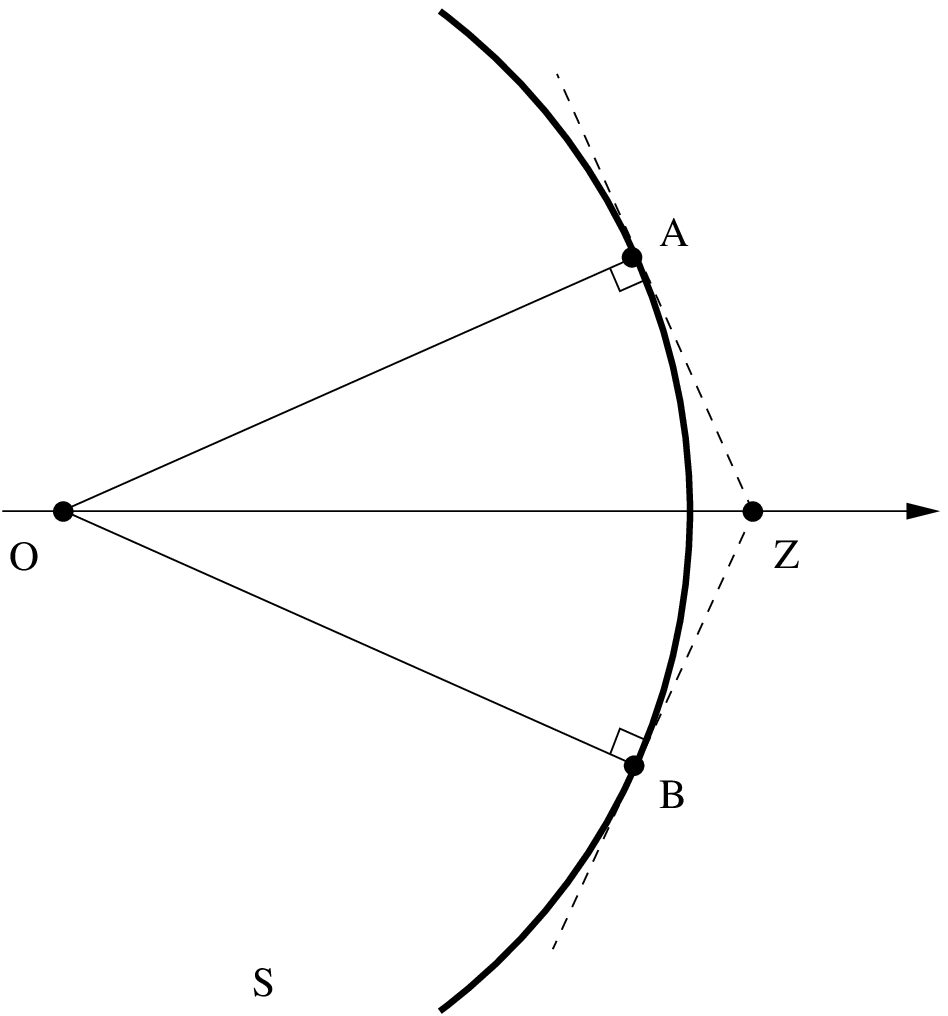} & &
\includegraphics[width=6.5cm,height=5cm]{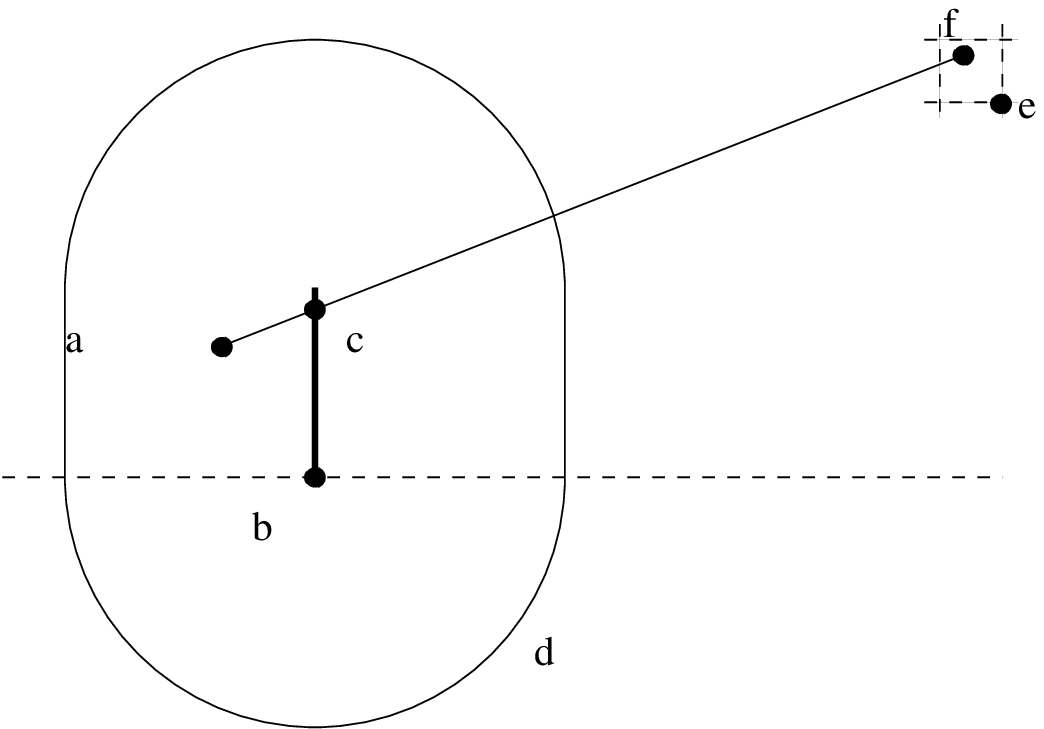}
\end{tabular}
\end{center}
\caption{\label{fig:arc} {\small \textit{On the left: The sphere $S(O,r)$ with center $O$ and radius $r$ is represented in bold. The tangents (the dotted lines) to $S(O,r)$ respectively at $A_r$ and $B_r$, intersect the horizontal axis on $W_r$. On the right: The segment $[X,\mathcal{A}(X)]$ crosses $I_{r}$ (in bold) on $J(X)$. On this picture, $X$ is outside $I_{r}\oplus B(0,c)$.}}}
\end{figure}

%Since semi-infinite paths do not die,

The number of semi-infinite paths that cross $a(A_r,B_r)$ is upper bounded by the number $\check{\chi}_r+\widehat{\chi}_r$ of edges of the RST which intersect $[A_r,W_r]\cup [W_r,B_r]$, where $\check{\chi}_r$ (resp. $\widehat{\chi}_r$) denotes the number of edges crossing $[A_r,W_r]$ (resp. $[W_r,B_r]$) and whose ancestors belong to the same half plane delimited by the line supporting $[A_r,W_r]$ (resp. $[W_r,B_r]$) as $O$. Since $\check{\chi}_r$ and $\widehat{\chi}_r$ are identically distributed:
\begin{equation}
\E(\widetilde{\chi}_{r}^{2})\leq \E((\check{\chi}_{r}+\widehat{\chi}_r)^{2})\leq 4 \E(\check{\chi}_r^2),\label{etape5}
\end{equation}
and it is sufficient to show that $\limsup_{r\to\infty}\E(\check{\chi}_r^2)$ is finite. By rotational invariance, the distribution of $\check{\chi}_r$ is also the distribution of the number of edges with ancestors of smaller abscissa and that cross the vertical segment $I_r=[(r,r\tan(1/r)),(r,0)]$. With an abuse of notation, we will denote again by $\check{\chi}_r$ the last random variable. Notice also that the length of $I_r$ is bounded by 2 as soon as $r$ is sufficiently large. Let $c>2+3\sqrt{2}$ (a technical condition needed in the sequel), and let us use $\oplus$ for the Minkowski addition. Then:
\begin{equation}
\check{\chi}_r=\check{\chi}_r^{\leq c}+\check{\chi}_r^{>c} ~\mbox{ a.s. } ,\label{decomp}
\end{equation}
where $\check{\chi}_r^{\leq c}$ (resp. $\check{\chi}_r^{>c}$) denotes the number of these edges with descendants belonging to $I_r\oplus B(O,c)$ (resp. being at a distance at least $c$ from $I_r$). $\check{\chi}_r^{\leq c}$ is upper bounded by $\card(N\cap (I_r\oplus B(O,c)))$ and admits a moment of order 2 that is bounded independently of $r$. It remains to study $\check{\chi}_r^{>c}$. Our idea is that each long edge is accompanied by a large empty space, so that it is rare that many long edges intersect $I_r$.\\
Let us consider an edge $[\mathcal{A}(X),X]$ that crosses $I_r$ at $J(X)$ and such that the distance from $X$ to $I_r$ is larger than $c$. If $X=(x,y)$, then let us consider $z(X)$ the point with coordinates $([x]+1,\sign(y)[|y|])$, where $[x]$ denotes the integer part of $x$. Among the points with integer coordinates that have an abscissa larger than $x$ and which are closer to the abscissa axis than $X$, $z(X)$ is the point that is the closest to $X$. See the right part of Figure \ref{fig:arc}.\\
By construction, $|X-z(X)|\leq \sqrt{2}$. Hence $B(O,|z(X)|-\sqrt{2})\subset B(O,|X|)$, where the radius of the first ball is positive as soon as $r\geq \sqrt{2}$. Let us consider the ball $B(z(X),|z(X)-(r,0)|-2-2\sqrt{2})$. For our choice of $c$, $|z(X)-(r,0)|-2-2\sqrt{2}\geq c-\sqrt{2}-2-2\sqrt{2}\geq 0$ and the radius is positive. If $U\in B(z(X),|z(X)-(r,0)|-2-2\sqrt{2})$, then
\begin{equation}
|U-X|\leq |U-z(X)|+\sqrt{2}\leq |z(X)-(r,0)|-2-\sqrt{2}\leq |X-J(X)|\leq |X-\mathcal{A}(X)|,
\end{equation}and thus $B(z(X),|z(X)-(r,0)|-2-2\sqrt{2})\subset B(X,|X-\mathcal{A}(X)|)$. As a consequence, if we introduce $\Lambda(z,r)=B(O,|z|)\cap B(z,|z-(r,0)|-2-2\sqrt{2})$ for $z=(z_x,z_y)\in \Z^2$ and $r$ sufficiently large, we have that $\card(N\cap \Lambda(z(X),r))\leq \card(N\cap B(O,|X|)\cap B(X,|X-\mathcal{A}(X)|)),$ the latter quantity being 0 since $[\mathcal{A}(X),X]$ is an edge of the RST, implying that there is no point of $N$ in $\Lambda(z(X),r)$. Thus, for $r\geq \sqrt{2}$:
\begin{equation}
\check{X}^{>c}_r\leq Y_r:= \sum_{\substack{z=(z_x,z_y)\in \Z^2 \\ z_x\geq r}} \ind_{\{\Lambda(z,r)\cap N=\emptyset\}}, ~ \mbox{ a.s.}\label{2.7}
\end{equation}Notice that if $r$ and $r'$ are such that $z_x\geq r'\geq r$, then $\Lambda(z,r)\subset \Lambda(z,r')$. This implies that $r\mapsto Y_r$ is almost surely a decreasing function of $r$. Then, if we fix $r_0 \geq \sqrt{2}$, $\forall r\geq r_0$, $\check{X}^{>c}_r\leq Y_{r_0}$. The volume of $\Lambda(z,r_0)$ is of the order of $|z|^2$ and for a given integer $\rho\geq r_0^2$, the number of points $z$ such that $|z|^2=\rho$ is of the order of $\sqrt{\rho}$. Thus, for two positive constants $C$ and $C'$:
\begin{align}
\E\big(Y_{r_0}\big)=\sum_{\substack{z=(z_x,z_y)\in \Z^2 \\ z_x\geq r_0}} \P\big(\Lambda(z,r_0)\cap N=\emptyset\big)\leq C\sum_{\rho\geq r_0^2} \sqrt{\rho} e^{-C'\rho}<+\infty.\label{etape3}
\end{align}It now remains to prove that $\E(Y_{r_0}^2)<+\infty$. For this, we compute
\begin{multline}
\E\Big(Y_{r_0}(Y_{r_0}-1)\Big)=  \sum_{\substack{z=(z_x,z_y)\in \Z^2,\\ z'=(z'_x,z'_y)\in \Z^2 \\ z_x\geq r_0, \ z'_x\geq r_0\\ z_x\not=z'_x}} \E\left(\ind_{\{\Lambda(z,r_0)\cap N=\emptyset\}}\ind_{\{\Lambda(z',r_0)\cap N=\emptyset\}}\right)\\
 \leq  2 \sum_{\rho\geq r_0^2} \sum_{\substack{z=(z_x,z_y)\in \Z^2,\\ |z_x|^2=\rho }} \sum_{\substack{z'=(z'_x,z'_y)\in \Z^2,\\ z'_x\geq r_0 \\ |z'|\leq |z|}} \P\Big(\Lambda(z,r_0)\cap N=\emptyset\Big)
 \leq  C \sum_{\rho\geq r_0^2} \rho^{3/2}e^{-C'\rho}<+\infty\label{etape4}
\end{multline}for two positive constants $C$ and $C'$. \eqref{etape3} and \eqref{etape4} show that $\E(Y_{r_0}^2)<+\infty$ and this concludes the proof.
\hfill $\Box$ \end{proof}

\begin{proof}[Proof of Lemma \ref{lemme:approximationDSF}]
We follow here the proof of Baccelli and Bordenave \cite[Section 3.6]{baccellibordenave}, where the case of a fixed radius $R$ is considered. Recall that $\mathcal{T}$ and $\mathcal{T}_{-e_x}$ are the RST and DSF with direction $-e_x$, constructed on the same PPP $N$. We denote by $\mathcal{A}(X)$ and $\mathcal{A}_{-e_x}(X)$ the ancestors of $X$ in $\mathcal{T}$ and $\mathcal{T}_{-e_x}$. Let $r>0$, $\alpha>0$ and $\beta>0$.
\begin{multline*}
\P\big(\mathcal{T}\cap B((r,0),r^\alpha) \neq \mathcal{T}_{-e_x}\cap B((r,0),r^\alpha)\big)=  \P\Big(\bigcup_{X\in N\cap B((r,0),r^\alpha)}\big\{\mathcal{A}(X)\not=\mathcal{A}_{-e_x}(X)\big\}\Big)\\
\begin{aligned}
%\leq & \P\big(N(B((r,0),r^\alpha))> r^\beta\big)+r^\beta \ C \ \Big(\frac{r^\alpha}{(r-r^\alpha)}+\frac{1}{r-r^\alpha}\Big)\nonumber\\
%\leq & \exp\Big(-r^\beta \log \big(\frac{r^{\beta-2\alpha}}{e \pi}\big)\Big)+r^\beta \ C \ \Big(\frac{r^\alpha}{(r-r^\alpha)}+\frac{1}{r-r^\alpha}\Big),
\leq & \P\big(N(B((r,0),r^\alpha))> r^\beta\big)+r^\beta \ C \sup_{X\in N\cap B((r,0),r^\alpha)} \P\big(\mathcal{A}(X)\not=\mathcal{A}_{-e_x}(X)\big) \nonumber\\
\leq & \exp\Big(-r^\beta \log \big(\frac{r^{\beta-2\alpha}}{e \pi}\big)\Big)+r^\beta \ C \ \frac{r^\alpha+1}{r-r^\alpha},
\end{aligned}
\end{multline*}by using \cite[Lemma 11.1.1]{talagrand} for the first term in the r.h.s. and \cite[Lemma 3.4]{baccellibordenave} for the second term. The first term converges to 0 iff $\beta>2\alpha$ and the second term converges to 0 iff $\alpha<1$ and $\beta+\alpha<1$. As a consequence, we see that for any $\alpha<1/3$ we can choose $\beta>2/3$, so that both terms converges to 0 when $r\rightarrow +\infty$.
\hfill $\Box$ \end{proof}

\section{Semi-infinite paths in a given direction}\label{section:directiondeterministe}

In this section, we fix a direction $\theta\in [0,2\pi)$ and are interested in the semi-infinite paths with asymptotic direction $\theta$. Our first result (Section \ref{section:unique}) refines Theorem \ref{HN1} and states that there exists a.s. a unique semi-infinite path with direction $\theta$. We deduce from this a precise description of the semi-infinite path with direction $\theta$ (Section \ref{section:description0}).\\

For the proof, let us introduce further notation. We define as $\mathcal{T}_{X}$ the subtree of $\mathcal{T}$ consisting of $X\not=O$ and all its descendants, i.e. all the vertices of $\mathcal{T}$ that have $X$ in their ancestry. This tree is naturally rooted at $X$.\\
%If $\mathcal{T}_X$ is unbounded, then we can define the right-most and left-most semi-infinite paths, $\underline{\gamma}_X$ and $\overline{\gamma}_X$, of $\mathcal{T}_X$. In this case, the subtree $\mathcal{T}_{X}$ indeed contains semi-infinite paths. \\
If $\mathcal{T}_X$ is unbounded, then we can construct two particular semi-infinite paths that we call the right-most and left-most semi-infinite paths, $\underline{\gamma}_X$ and $\overline{\gamma}_X$, of $\mathcal{T}_X$. The construction follows.\\
Put $X_{0}=X$. Let
$$K_0=\Card\{Y\in N,\ \mathcal{A}(Y)=X_0\mbox{ and }\mathcal{T}_Y\mbox{ is unbounded }\}$$
be the number children of $X_0$ with infinite descendance. Since the number of children of a given vertex is a.s. finite (see \cite[Section 3.3.2.]{baccellibordenave}) and since $X_{0}$ has infinitely many descendants, $K_0\geq 1$ a.s. It is possible to rank these offspring $X_0^1,\dots,X_0^{K_0}$ by increasing order of the oriented angles $\widehat{\mathcal{A}(X_0)X_0 X_0^k}$ for $k\in \{1,\dots,K_0\}$. Define $X_{1}$ as the child of $X_{0}$ corresponding to the largest value of these angles. Iterating this construction, a semi-infinite path $\overline{\gamma}_{X}=(X_{n})_{n\in\mathbb{N}}$ rooted at $X$ is built.\\
In the same way, a semi-infinite path $\underline{\gamma}_{X}$ rooted at $X$ is constructed such that, among the semi-infinite paths of $\mathcal{T}_{X}$, $\underline{\gamma}_{X}$ is the lowest one (in the trigonometric sense). Consequently, any given semi-infinite path in $\mathcal{T}_{X}$ is trapped between $\underline{\gamma}_{X}$ and $\overline{\gamma}_{X}$ (in the trigonometric sense).

\subsection{Uniqueness}
\label{section:unique}

Part $(iii)$ of Theorem \ref{HN1} ensures the existence of random directions with at least two semi-infinite paths. However, there is no more than one semi-infinite path with a deterministic direction (Proposition \ref{prop:<2}). This result completes Part $(ii)$ of Theorem \ref{HN1}.

\begin{prop}
\label{prop:<2}
For all $\theta\in[0,2\pi)$, there a.s. exists exactly one semi-infinite path with asymptotic direction $\theta$ in the RST.
\end{prop}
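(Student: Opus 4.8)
The plan is to separate existence from uniqueness. The existence of at least one semi-infinite path with direction $\theta$ is exactly part $(ii)$ of Theorem \ref{HN1}, so the real content of the statement is uniqueness for a fixed \emph{deterministic} $\theta$. Writing $\Theta$ for the (random) set of directions that carry at least two semi-infinite paths, I must show that $p:=\P(\theta\in\Theta)=0$ for each fixed $\theta$. The two ingredients I would combine are the rotational invariance of the model and a deterministic combinatorial fact: that $\Theta$ is almost surely countable.

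First, rotation invariance. Since the PPP $N$ (in its Palm version, rooted at $O$) is invariant in law under rotations about the origin, and the local rule \eqref{defRST} is rotation-equivariant, the whole RST is rotation-equivariant; hence $\P(\theta\in\Theta)$ does not depend on $\theta$, and I call this common value $p$. By Fubini, $\E\big(\mathrm{Leb}(\Theta)\big)=\int_0^{2\pi}\P(\theta\in\Theta)\,d\theta=2\pi p$, so it suffices to prove that $\mathrm{Leb}(\Theta)=0$ almost surely; this forces $p=0$, which is the claim.

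The heart of the argument is therefore to show $\mathrm{Leb}(\Theta)=0$, which I would obtain by proving that $\Theta$ is a.s.\ countable. Fix a bad direction $\theta\in\Theta$ and take two distinct semi-infinite paths with this direction. Extending them back through the ancestry to the root, they share a common initial segment and split at a last common vertex $V$, entering two distinct children of $V$. The key planar input is Lemma \ref{lemm:croisement}: branches of $\mathcal{T}$ do not cross, so the ends of the tree carry a cyclic order compatible with their asymptotic directions. Consequently, for each child $d$ of $V$ whose subtree $\mathcal{T}_{d}$ is unbounded, the set of directions realized by $\mathcal{T}_{d}$ is a closed interval $[\alpha_{d},\beta_{d}]$, with endpoints realized by the extremal paths $\underline{\gamma}_{d}$ and $\overline{\gamma}_{d}$. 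Non-crossing forces the intervals attached to distinct children of $V$, listed in planar order, to be non-overlapping except possibly at shared endpoints. Since $\theta$ is realized in two different children of $V$, it must be one of these finitely many boundary directions; as $V$ has a.s.\ finitely many children (\cite[Section 3.3.2]{baccellibordenave}), only finitely many bad directions split at $V$. Summing over the countably many vertices $V\in N$, the set $\Theta$ is a countable union of finite sets, hence countable and Lebesgue-null.

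The main obstacle is the deterministic planar step: making precise that non-crossing yields a genuine monotone correspondence between the cyclic order of the ends and their asymptotic directions, and that the direction-ranges of the subtrees hanging at a common vertex are closed intervals meeting only at their endpoints. This is exactly where planarity is essential, and where Lemma \ref{lemm:croisement} together with the extremal paths $\underline{\gamma}_{X},\overline{\gamma}_{X}$ does the work; once this monotonicity is established, the per-vertex finiteness and the countability of $\Theta$ follow, and the rotation-invariance/Fubini argument closes the proof. I would also remark that the countability of $\Theta$ is consistent with, and sharpens, part $(iii)$ of Theorem \ref{HN1}, which asserts only density.
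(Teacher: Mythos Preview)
Your proof is correct and follows essentially the same architecture as the paper's: rotation invariance plus Fubini reduce the question to showing that the random set $\Theta$ of multi-path directions has Lebesgue measure zero, and this is obtained by exhibiting $\Theta$ as countable via the vertices of $N$ and the extremal paths $\underline{\gamma}_X,\overline{\gamma}_X$. The only noteworthy difference is in the countability bookkeeping: you route each $\theta\in\Theta$ through its bifurcation vertex $V$ and argue via the interval structure of the children's direction-sets, whereas the paper picks \emph{any} vertex $X$ on one path but not the other and observes directly that one of $\underline{\gamma}_X,\overline{\gamma}_X$ is trapped between the two given paths and hence has direction $\theta$; this bypasses the interval discussion entirely and makes the ``at most two bad directions per vertex'' count immediate.
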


The idea of the proof of Proposition \ref{prop:<2} is classical: see \cite{howardnewman2} for first passage percolation models defined from homogeneous PPP on $\mathbb{R}^{2}$ and \cite{FP} for a directed last passage percolation model on the lattice $\mathbb{Z}^{2}$. Thanks to Fubini's theorem, we get that for Lebesgue almost every $\theta$ in $[0,2\pi)$, there is at most one semi-infinite path with asymptotic direction $\theta$ with probability $1$. Actually, this statement holds for all $\theta\in[0,2\pi)$ by the isotropic character of the PPP $N$.

\begin{proof}
Let us denote by $U(\theta)$ the event that there exist at least two different semi-infinite paths in the RST with asymptotic direction $\theta$. Now, assume the event $U(\theta)$ occurs and let $\gamma_{1}$ and $\gamma_{2}$ be two such semi-infinite paths. Let $X$ be a point of the PPP $N$ belonging to $\gamma_{1}$ but not to $\gamma_{2}$. Thus the semi-infinite sub-path of $\gamma_1$ rooted at $X$ belongs to $\mathcal{T}_X$. Then, one of the two semi-infinite paths $\underline{\gamma}_{X}$ and $\overline{\gamma}_{X}$ is trapped between $\gamma_{1}$ and $\gamma_{2}$, by planarity and since paths are non-intersecting (see Lemma \ref{lemm:croisement} in appendix). So, it also admits $\theta$ as asymptotic direction.\\
Let us denote by $\lambda$ the Lebesgue measure on $[0,2\pi)$. We are interested in the Lebesgue measure of the set $\{\theta ; U(\theta) \}$ of directions $\theta\in [0,2\pi)$ where the event $U(\theta)$ is satisfied. The previous remark implies:
\begin{eqnarray*}
\E \lambda \{\theta ; U(\theta) \} & = & \int_{\Omega} \int_{0}^{2\pi} \ind_{U(\theta)}(\omega) \; d\theta \; d\P(\omega) \\
& \leq & \int_{\Omega} \sum_{X\in N(\omega)} \ind_{{\scriptstyle \mathcal{T}_X\;\mbox{\small{unbounded}}}} \int_{0}^{2\pi} \ind_{{\scriptstyle \underline{\gamma}_{X} \;\mbox{\small{or}}\; \overline{\gamma}_{X} \;\mbox{\small{admits}}\; \theta \;\mbox{\small{as}}} \atop \scriptstyle \mbox{\small{asymptotic direction}}}(\omega) \; d\theta \; d\P(\omega)
\end{eqnarray*}
($N$ is a.s. countable). For a given point $X\in N$ and a given $\omega\in \Omega$, the indicator function
$$
\ind_{{\scriptstyle \underline{\gamma}_{X} \;\mbox{\small{or}}\; \overline{\gamma}_{X} \;\mbox{\small{admits}}\; \theta \;\mbox{\small{as}}} \atop \scriptstyle \mbox{\small{asymptotic direction}}}(\omega)
$$
is equal to $1$ for at most two different angles in $[0,2\pi)$. Its integral is then equal to zero. Using the Fubini's theorem,
$$
\int_{0}^{2\pi} \P (U(\theta)) \; d\theta = \E \lambda \{\theta ; U(\theta) \} = 0 ~.
$$
So, the probability $\P(U(\theta))$ is zero for Lebesgue a.e. $\theta$ in $[0,2\pi)$. Actually, this is true for every $\theta$ in $[0,2\pi)$ thanks to the isotropic character of the PPP $N$. Combining this with Part $(ii)$ of Theorem \ref{HN1}, the announced result follows.
\hfill $\Box$ \end{proof}

\subsection{Further description of the semi-infinite path with direction 0}
\label{section:description0}

In the rest of this section, we discuss some consequences of Proposition \ref{prop:<2}.
For any given $\theta\in [0,2\pi)$, let us denote by $\gamma_{\theta}$ the semi-infinite path of the RST, started at the origin and with asymptotic direction $\theta$. It is a.s. well defined by Proposition \ref{prop:<2}. Since the distribution of the RST is invariant by rotation, we will henceforth assume that $\theta=0$.\\

Let us recall that $\widetilde{\chi}_r$ denotes the number of intersection points of $a(A_r, B_r)$ with the semi-infinite paths of the RST and that $\widetilde{\chi}_r\rightarrow 0$ in probability, by \eqref{step1}. We have

\begin{cor}
$\limsup_{r\rightarrow \infty}\widetilde{\chi}_r\geq 1$ a.s.
\end{cor}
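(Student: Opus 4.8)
The plan is to realise the $\limsup$ through the single path $\gamma_0$, the a.s. unique semi-infinite path with asymptotic direction $0$ (Proposition~\ref{prop:<2}), and to show that $\gamma_0$ meets the arcs $a(A_r,B_r)$ for an unbounded set of radii. The clean sufficient condition I would use is that $\gamma_0$ \emph{crosses the positive $x$-axis at arbitrarily large abscissas}. Indeed, write $\gamma_0=(X_n)_{n\ge 0}$ with $X_n=(x_n,y_n)$; since the asymptotic direction is $0$ we have $x_n\to+\infty$ and $y_n/x_n\to 0$, so for $n$ large any edge $[X_n,X_{n+1}]$ with $y_ny_{n+1}<0$ has both endpoints of positive abscissa and thus meets the positive $x$-axis at a point $(x^\ast,0)$ with $x^\ast\to\infty$. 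Such a point lies on $S(O,x^\ast)$ at angle $0\in[-1/x^\ast,1/x^\ast]$, hence belongs to $a(A_{x^\ast},B_{x^\ast})$; it is an intersection point of the semi-infinite path $\gamma_0$ with that arc, so $\widetilde{\chi}_{x^\ast}\ge 1$. Therefore it suffices to prove that the transversal coordinate $y_n$ changes sign infinitely often.

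To obtain the sign changes I would combine the reflection symmetry with a zero-one law. Let $\sigma:(x,y)\mapsto(x,-y)$; it preserves the law of $N$ and fixes the direction $0$, so by equivariance of the RST under isometries and the uniqueness in Proposition~\ref{prop:<2} one has $\gamma_0(\sigma N)=\sigma\,\gamma_0(N)$, i.e. the transversal sequence of $\gamma_0(\sigma N)$ is $(-y_n)$. Setting $B=\{y_n>0 \text{ infinitely often}\}$ and $C=\{y_n<0\text{ infinitely often}\}$, the identity $N\overset{d}{=}\sigma N$ then gives $\P(B)=\P(C)$. If, in addition, $B$ obeys a zero-one law we are done: were $\P(B)=0$, then also $\P(C)=0$, forcing $y_n$ to be eventually both $\le 0$ and $\ge 0$, hence eventually $0$, which is impossible almost surely; thus $\P(B)=\P(C)=1$ and $\P(B\cap C)=1$, that is, $y_n$ changes sign infinitely often a.s.

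The heart of the proof, and the step I expect to be the main obstacle, is precisely this zero-one law: that the asymptotic-side event $B$ is measurable with respect to the tail $\sigma$-field of $N$, or equivalently that modifying $N$ in a bounded region does not affect whether $y_n>0$ infinitely often. This amounts to a stabilisation property of $\gamma_0$, namely that its far-away behaviour is determined by $N$ on the complements of large balls up to a finite discrepancy; it is delicate because the ancestor rule \eqref{defRST} creates long-range dependence, and it is where I would concentrate the effort. I would stress that the local approximation by the DSF (Lemma~\ref{lemme:approximationDSF}) controls the geometry scale by scale but does \emph{not} by itself yield this global recurrence, since the outward direction of $\gamma_0$ corresponds to the descendant direction of the DSF, along which paths are a.s. finite. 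Finally, I would observe that no path of asymptotic direction $\theta\neq 0$ can meet $a(A_r,B_r)$ for large $r$ (its crossing angles converge to $\theta$, while the arc subtends $[-1/r,1/r]\to\{0\}$), so $\gamma_0$ is the only possible contributor at large radii; this is fully consistent with $\widetilde{\chi}_r\to 0$ in probability (eq.~\eqref{step1}), the crossings occurring only on a sparse, unbounded set of radii.
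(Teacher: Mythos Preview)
Your reduction to ``$y_n$ changes sign infinitely often'' is sound, and the symmetry step $\P(B)=\P(C)$ is correct. But the zero-one law for $B$ is a genuine gap, as you yourself flag: you give no argument that $B$ is measurable with respect to the tail $\sigma$-field of $N$, and indeed a bounded perturbation of $N$ can rewire the early portion of $\gamma_0$ in a way whose propagation along the path is not controlled by anything in the paper. Without the zero-one law you only obtain $\P(B)>0$ from the symmetry, not $\P(B)=1$, and the argument does not close.

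The paper's proof is entirely different and sidesteps this obstacle; it never tracks $\gamma_0$. One argues by contradiction: if $\widetilde\chi_r=0$ for all $r>r_0$ on an event of positive probability, then no semi-infinite path crosses the positive $x$-axis beyond $r_0$. Planarity then lets one isolate two semi-infinite paths, one eventually below the axis and one eventually above, with no semi-infinite path trapped between them; Parts (i)--(ii) of Theorem~\ref{HN1} force both to have asymptotic direction $0$, contradicting Proposition~\ref{prop:<2}. The point is to use uniqueness to rule out \emph{two} paths of direction $0$, rather than to establish recurrence of the \emph{single} path $\gamma_0$. Incidentally, this same extremal-path trick would fill your gap without any zero-one law: on $B^c$ the path $\gamma_0$ lies eventually in the closed lower half-plane, and the construction produces a second semi-infinite path of direction $0$ eventually above the axis, again contradicting uniqueness; so $\P(B^c)=0$ directly.
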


\begin{proof}
Assume that there exists with positive probability a (random) radius $r_{0}$ such that $\widetilde{\chi}_r=0$ whenever $r>r_{0}$. Let us work on the set where this event is realized. In this case, no semi-infinite path crosses the abscissa axis after $r_0$. Then, we can exhibit two semi-infinite paths, say $\gamma$ and $\gamma'$, respectively below and above the horizontal axis, and satisfying the following property: there is no semi-infinite path in the RST, different from $\gamma$ and $\gamma'$, and trapped between them (in the trigonometric sense). Parts $(i)$ and $(ii)$ of Theorem \ref{HN1} force $\gamma$ and $\gamma'$ to have the same asymptotic direction, namely $0$. Such a situation never happens by Proposition \ref{prop:<2}. In other words,
$$
\P \left( \limsup_{r\to\infty} \widetilde{\chi}_r \geq 1 \right) \, = \, 1 ~.
$$
\hfill $\Box$ \end{proof}

From vertices of $\gamma_{0}$ (different from $O$), some paths (finite or not) emanate, forming together an unbounded subtree of the RST $\mathcal{T}$ for which $\gamma_{0}$ can be understood as the spine. The next results describe the skeleton of this subtree.\\
Let us denote by $V_{\infty}^{+}$ and $V_{\infty}^{-}$ the set of points $X\in N\cap \gamma_{0}\setminus\{O\}$ from which (at least) another semi-infinite path emanates, respectively above and below $\gamma_{0}$. Of course, $V_{\infty}^{+}$ and $V_{\infty}^{-}$ may have a nonempty intersection.

\begin{cor}
\label{corol:skeleton}
\begin{enumerate}
\item Almost surely, $V_{\infty}^{+}$ and $V_{\infty}^{-}$ are of infinite cardinality.
\item For $r>0$, let us denote by $D_r$ the set of directions $\alpha\in[0,2\pi)$ with a semi-infinite path starting from a point $X$ in $V_{\infty}^{+}\cup V_{\infty}^{-}$ with modulus $|X|>r$. Then, there a.s. exist two nonincreasing sequences $(\alpha_{r})_{r>0}$ and $(\beta_{r})_{r>0}$ of positive r.v.'s such that
$$
D_r = [-\alpha_{r},\beta_{r}] \; \mbox{ (modulo $2\pi$) and } \;  \lim_{r\to+\infty} \alpha_{r} = \lim_{r\to+\infty} \beta_{r} = 0 ~.
$$
\item Let $v_{\infty}^{r}$ be the cardinality of $( V_{\infty}^{+}\cup V_{\infty}^{-} )\cap B(O,r)$. Then,
$$
\lim_{r\to\infty} \E \frac{v_{\infty}^{r}}{r} = 0 ~.
$$
\end{enumerate}
\end{cor}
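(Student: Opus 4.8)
The plan is to organize everything around a single geometric quantity. For each $\theta>0$, Proposition \ref{prop:<2} gives a unique semi-infinite path $\gamma_{\theta}$ issued from $O$ with direction $\theta$, and since distinct paths do not cross (Lemma \ref{lemm:croisement}), $\gamma_{\theta}$ coincides with $\gamma_{0}$ along an initial segment and then diverges, above $\gamma_{0}$, at a well-defined vertex $X_{\theta}\in V_{\infty}^{+}$. I would first record a monotonicity fact: if $0<\theta'<\theta$, then $\gamma_{\theta'}$ is trapped between $\gamma_{0}$ and $\gamma_{\theta}$; as these two coincide up to $X_{\theta}$, so must $\gamma_{\theta'}$, whence $|X_{\theta'}|\ge|X_{\theta}|$. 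Thus $\theta\mapsto|X_{\theta}|$ is nonincreasing for $\theta>0$.

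The heart of the matter, and the step I expect to be the main obstacle, is to show that $|X_{\theta}|\to\infty$ as $\theta\to0^{+}$. I would argue by contradiction. By monotonicity the limit $L=\lim_{\theta\to0^{+}}|X_{\theta}|$ exists in $(0,+\infty]$; suppose $L<\infty$. Since $N$ has a.s. finitely many points in any ball and distinct moduli, $X_{\theta}$ is eventually a fixed vertex $X^{*}$ for all small $\theta>0$. As $X^{*}$ has finitely many children (\cite[Section 3.3.2]{baccellibordenave}) and $\gamma_{0}$ leaves $X^{*}$ through one of them, some single other child $Y^{*}$ above $\gamma_{0}$ is used by $\gamma_{\theta}$ along a sequence $\theta_{n}\downarrow0$; beyond $Y^{*}$ these paths live in $\mathcal{T}_{Y^{*}}$ and have directions $\theta_{n}$. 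The lowest semi-infinite path $\underline{\gamma}_{Y^{*}}$ of $\mathcal{T}_{Y^{*}}$ then has direction $\le\inf_{n}\theta_{n}=0$, while non-crossing with $\gamma_{0}$ forces its direction to be $\ge0$; hence it has direction exactly $0$. Prepending the ancestry segment $O\to\cdots\to X^{*}\to Y^{*}$ produces a second semi-infinite path from $O$ with direction $0$, distinct from $\gamma_{0}$ because $Y^{*}$ is not the child of $X^{*}$ used by $\gamma_{0}$. This contradicts Proposition \ref{prop:<2}, so $L=+\infty$.

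Part 1 follows at once: as $|X_{\theta}|\to\infty$ with each $|X_{\theta}|<\infty$, the vertices $X_{\theta}$ take infinitely many distinct values, all in $V_{\infty}^{+}$, so $V_{\infty}^{+}$ is a.s. infinite; the symmetric argument with $\theta\to0^{-}$ handles $V_{\infty}^{-}$. For Part 2, I would identify $D_{r}\cap(0,\pi)=\{\theta>0:|X_{\theta}|>r\}$: the inclusion $\supseteq$ is clear, and for $\subseteq$, any upward emanating path of direction $\alpha$ from a point $X$ with $|X|>r$, once prepended with the $\gamma_{0}$-segment up to $X$, is a direction-$\alpha$ path from $O$, hence equals $\gamma_{\alpha}$ by Proposition \ref{prop:<2}, so $X=X_{\alpha}$. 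Since $\theta\mapsto|X_{\theta}|$ is nonincreasing, this set is an interval $(0,\beta_{r}]$ with $\beta_{r}=\sup\{\theta:|X_{\theta}|>r\}$; it is nonempty (so $\beta_{r}>0$) because $|X_{\theta}|\to\infty$, nonincreasing in $r$ by the nesting of the defining sets, and $\beta_{r}\to0$ because each fixed $\theta_{0}>0$ has $|X_{\theta_{0}}|<\infty$, hence $\theta_{0}\notin D_{r}$ for $r>|X_{\theta_{0}}|$. The symmetric construction below $\gamma_{0}$ yields $\alpha_{r}$, giving $D_{r}=[-\alpha_{r},\beta_{r}]$ modulo $2\pi$ (up to the behaviour at the endpoints and at $0$, where $\gamma_{0}$ itself sits).

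Finally, Part 3 reduces directly to the sublinearity already proved. To each $X\in(V_{\infty}^{+}\cup V_{\infty}^{-})\cap B(O,r)$ I would associate the first crossing point of $S(O,r)$ by one semi-infinite path emanating from $X$; since $|X|<r$, such a crossing exists, and since distinct branch points send their emanating paths into edge-disjoint parts of the tree (no re-merging toward descendants), these crossing points are a.s. distinct. This produces an injection into the set counted by $\chi_{r}$, so $v_{\infty}^{r}\le\chi_{r}$ a.s., and therefore $\E(v_{\infty}^{r})/r\le\E(\chi_{r})/r\to0$ by Theorem \ref{theo:sublin}. The only delicate point here is the a.s. distinctness of the crossing points, which rests on planarity and on the fact that two distinct branches of the tree share no edge.
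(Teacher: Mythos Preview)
Your proof is correct and follows essentially the same approach as the paper: the contradiction argument producing a second semi-infinite path of direction $0$ (violating Proposition~\ref{prop:<2}) is the paper's key step for Part~1, the paper likewise identifies $D_r$ as the interval of directions reachable from the first vertex of $\gamma_0$ beyond radius $r$ and uses the bifurcation point $X_{0,\beta}$ to show $\beta_r\to 0$, and Part~3 is exactly the inequality $v_\infty^r\le\chi_r$ together with Theorem~\ref{theo:sublin}. Your organization around the monotone map $\theta\mapsto|X_\theta|$ is a clean unifying thread that the paper uses only implicitly; one small slip is the claim ``$X=X_\alpha$'' in the $\subseteq$ direction of Part~2 (the path witnessing $\alpha\in D_r$ could run along $\gamma_0$ before branching), but since $X$ then lies on the common segment of $\gamma_0$ and $\gamma_\alpha$ you still get $|X_\alpha|\ge|X|>r$, so the conclusion stands.
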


The first two assertions of Corollary \ref{corol:skeleton} say that an infinite number of unbounded subtrees emanate from the semi-infinite path $\gamma_{0}$. Each of them covers a whole interval of asymptotic directions whose length tends to $0$ as its starting point (on $\gamma_{0}$) is far from the origin. The last assertion of Corollary \ref{corol:skeleton} can be understood as follows: the expected density of points of $\gamma_{0}$ from which emanates another semi-infinite path is zero. For this purpose, recall that the cardinality of $\gamma_{0}\cap B(O,r)$ is of order $r$ (see \cite[Theorem 2.5]{baccellibordenave}).

\begin{proof}
On the event $\{V_{\infty}^{+}\mbox{ is finite}\}$, let us consider the point $Y$ of $V_{\infty}^{+}$ of highest modulus. Let $Y''$ be the child of $Y$ belonging to $\gamma_0$. From the definition of $V_\infty^+$, the set of points $\{X\in N,\ \mathcal{A}(X)=Y,\ \widehat{Y'' Y X}>0 \mbox{ and }\mathcal{T}_X\mbox{ is unbounded}\}$ is non empty. The points of this set can be ranked by increasing values of $\widehat{Y'' Y X}$s. Let $Y'$ be the point of this set corresponding to the smallest positive angle $\widehat{Y'' Y X}$. In the subtree $\mathcal{T}_{Y'}$, we can define the lowest semi-infinite path $\underline{\gamma}_{Y'}$ as defined in the beginning of Section \ref{section:directiondeterministe}. Thanks to Part $(i)$ of Theorem \ref{HN1}, $\underline{\gamma}_{Y'}$ has an asymptotic direction, say $\theta$. Moreover, using the planarity and  the non-crossing property of paths together with the definition of points $Y$, $Y'$, all the paths between $\gamma_0$ and $\underline{\gamma}_{Y'}$ (in the trigonometric sense) are finite. Using Part $(ii)$ of Theorem \ref{HN1}, we deduce that $\underline{\gamma}_{Y'}$ and $\gamma_0$ have the same asymptotic direction, i.e. $\theta=0$. Now, by Proposition \ref{prop:<2}, such a situation never occurs. So the set $V_{\infty}^{+}$ is a.s. infinite. The same goes for $V_{\infty}^{-}$.\\

Let us prove the second part of Corollary \ref{corol:skeleton}. For any $r$, let us consider the point $X$ of smallest modulus among the points of $\gamma_0\cap B(O,r)^{c}$. The semi-infinite paths $\underline{\gamma}_{X}$ and $\overline{\gamma}_{X}$ have a.s. asymptotic directions, say respectively $-\alpha_{r}$ and $\beta_{r}$ (modulo $2\pi$) with $\alpha_{r}\geq 0$, $\beta_{r}\geq 0$. Hence, $(\alpha_{r})_{r>0}$ and $(\beta_{r})_{r>0}$ are by construction non-increasing sequences of positive real numbers (by Proposition \ref{prop:<2}).\\
Let us consider $D_r$ the set of directions corresponding to semi-infinite paths starting from points in $V_{\infty}^{+}\cup V_{\infty}^{-}$ with modulus greater than $r$. $D_r$ contains $\alpha_r$ and $\beta_r$ defined above. The bi-infinite path obtained by concatenation of $\underline{\gamma}_{X}$ and $\overline{\gamma}_{X}$ divides $\R^{2}$ into two unbounded regions. Since the paths of the RST cannot cross, $D_r$ is included in the real interval $[-\alpha_{r},\beta_{r}]$. This also forces any given semi-infinite path with asymptotic direction $\alpha$ in $[-\alpha_{r},\beta_{r}]$ to go through the vertex $X$. By Part $(ii)$ of Theorem \ref{HN1}, $D_r$ is then an interval. It follows $D_{r}=[-\alpha_{r},\beta_{r}]$.\\
Finally, let us respectively denote by $\bar{\alpha}$ and $\bar{\beta}$ the limits of sequences $(\alpha_{r})_{r>0}$ and $(\beta_{r})_{r>0}$. Let $\beta>0$. Let $X_{0,\beta}$ be the bifurcation point of $\gamma_0$ and the semi-infinite path with asymptotic direction $\beta$ (whose existence and uniqueness are given by Proposition \ref{prop:<2}). Then, $\bar{\beta}\geq \beta$ implies that $\beta$ belongs to any interval $D_{r}$. In other words,
$$
\P (\bar{\beta}\geq \beta) = \lim_{r\to\infty} \P (|X_{0,\beta}|>r) = 0 ~.
$$
As a consequence, $\bar{\beta}$ is a.s. equal to $0$. Similarly, we can prove $\bar{\alpha}=0$.

The third part of Corollary \ref{corol:skeleton} follows directly from the inequality $v_{\infty}^{r}\leq \chi_{r}$, where $\chi_{r}$ counts the intersection points of $S(O,r)$ with the semi-infinite paths of the RST, and Theorem \ref{theo:sublin}.
\hfill $\Box$ \end{proof}

\section{The Colored RST}
\label{section:coloredRST}

The aim of this section is to describe the subtrees of the RST $\mathcal{T}$ rooted at the children of $O$. In order to distinguish them, let us start with allocating a color or a label (denoted by integers $i,j,k\ldots$) to each child $X$ of $O$. Recall for this purpose there are a.s. at most $5$ (see Lemma 3.2 in \cite{baccellibordenave}). Then, we paint all the vertices of the subtrees $\mathcal{T}_{X}$ with the color of $X$. This process provides a coloration of points of $N\setminus \{O\}$. Finally, each segment $[X,\A(X)]$, for $X\in N\setminus \{O\}$, is painted with the color of $X$. This can be done without ambiguity thanks to the planarity and the non-crossing property of paths. See Figure \ref{simu_fb}.\\
There are several ways to label the subtrees of $\mathcal{T}$ rooted at $O$ with the colors. We use the notation $i$, $j$, etc. when the labeling is not important and can be any labeling (a possibility among others is to start from the angle 0 (abscissa axis) and label by 1 the subtree rooted at the first descendant of $O$ that we encounter when exploring the directions in the trigonometric sense).\\
However, to take advantage of the exchangeability of the different colored trees, one may also proceed as follows. To each of the direct descendants of $O$, a uniform independent r.v. is attached. We then define the tree with color 1 as the tree consisting of the offspring of the descendant with the smallest uniform r.v. This amounts to choosing one of the descendants at random for the first tree. When we proceed so, we use the notation $\underline{1}, \dots \underline{i},\dots $ for the labels.\\

\begin{figure}[ht]
\begin{center}
\begin{tabular}{ccc}
(a) & (b) & (c) \\
\includegraphics[width = 0.3\textwidth,trim=0cm 0cm 0cm 0cm]{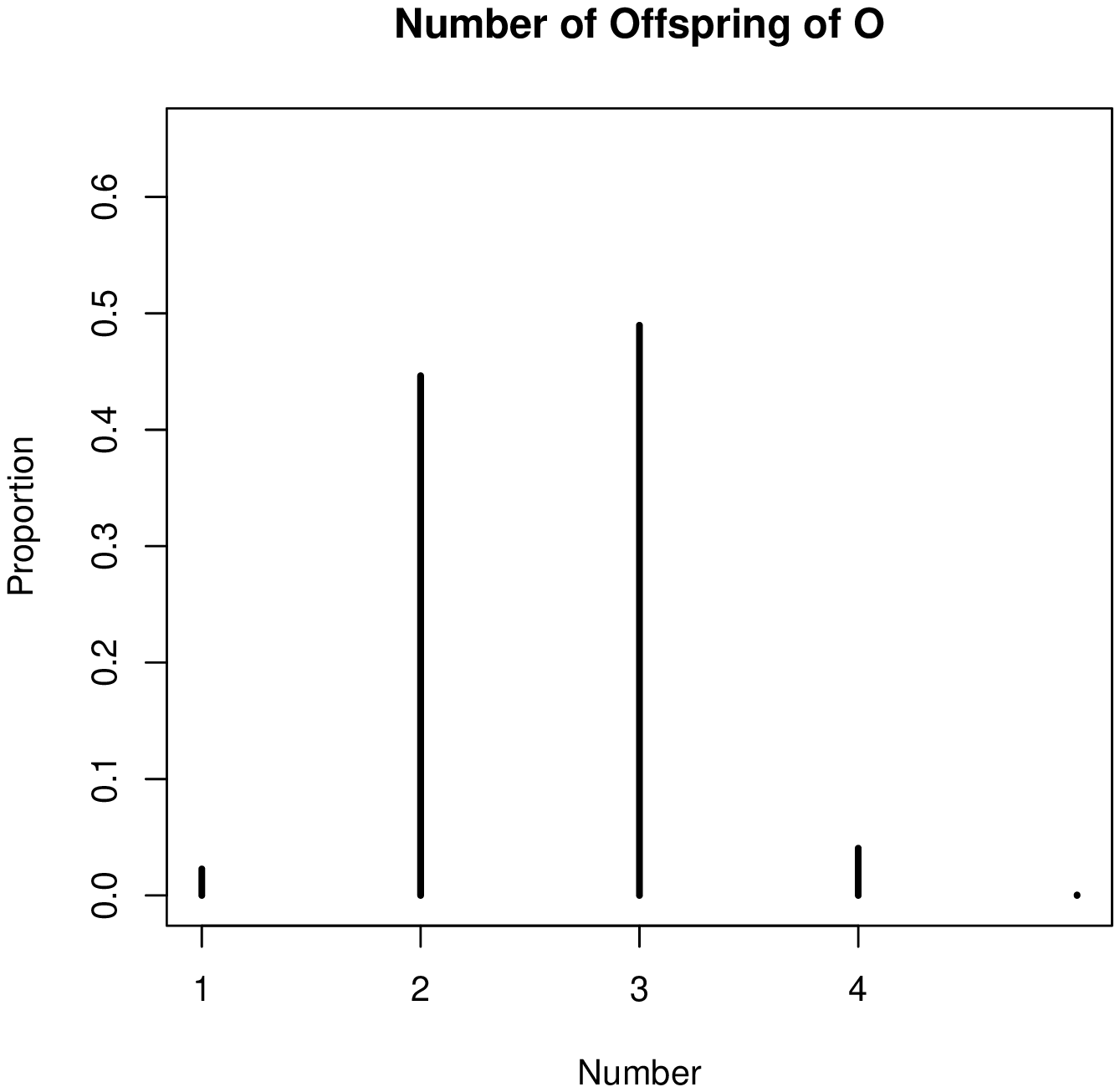} &
\includegraphics[width = 0.3\textwidth,trim=0cm 0cm 0cm 0cm]{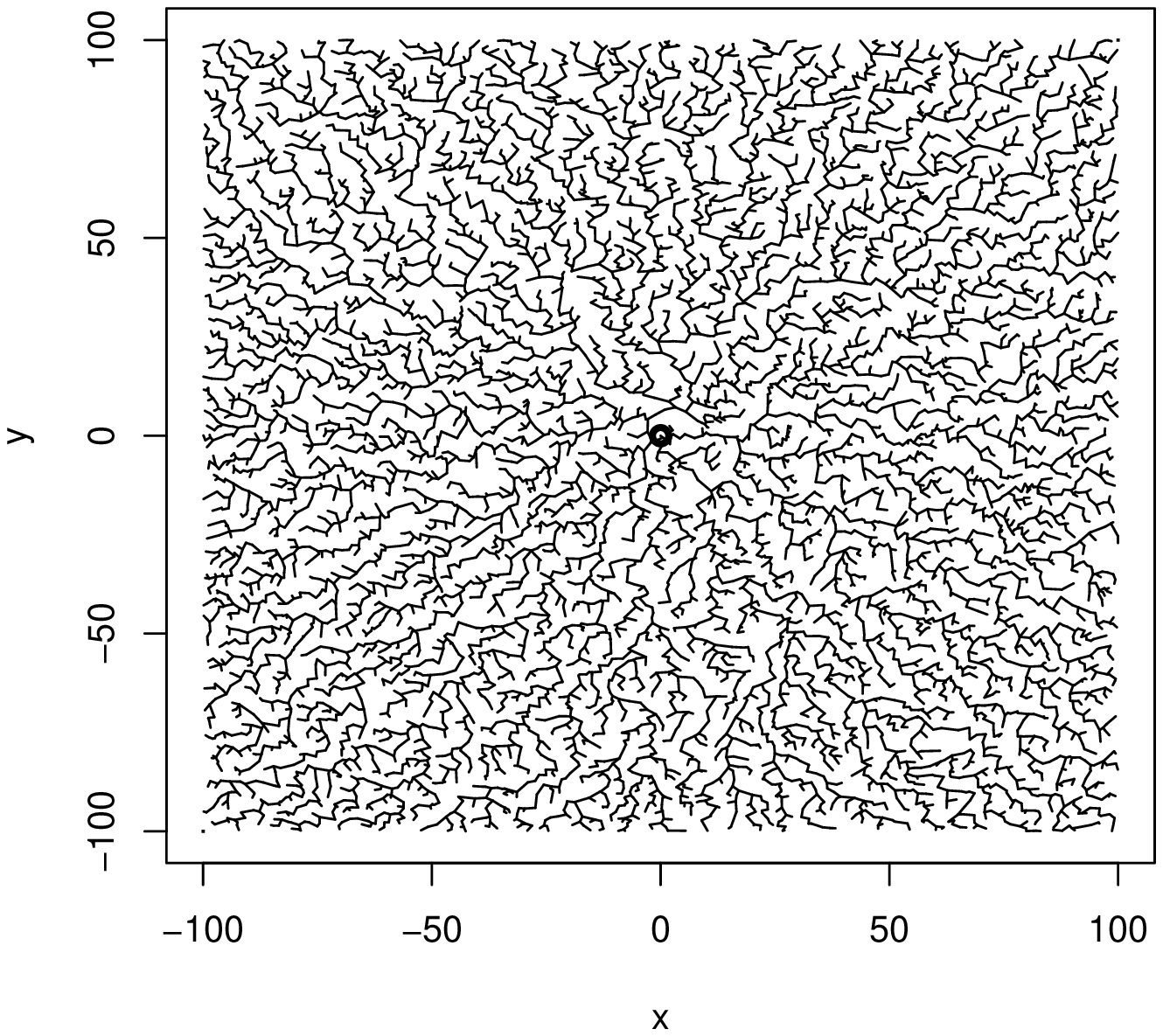} &
\includegraphics[width = 0.3\textwidth,trim=0cm 0cm 0cm 0cm]{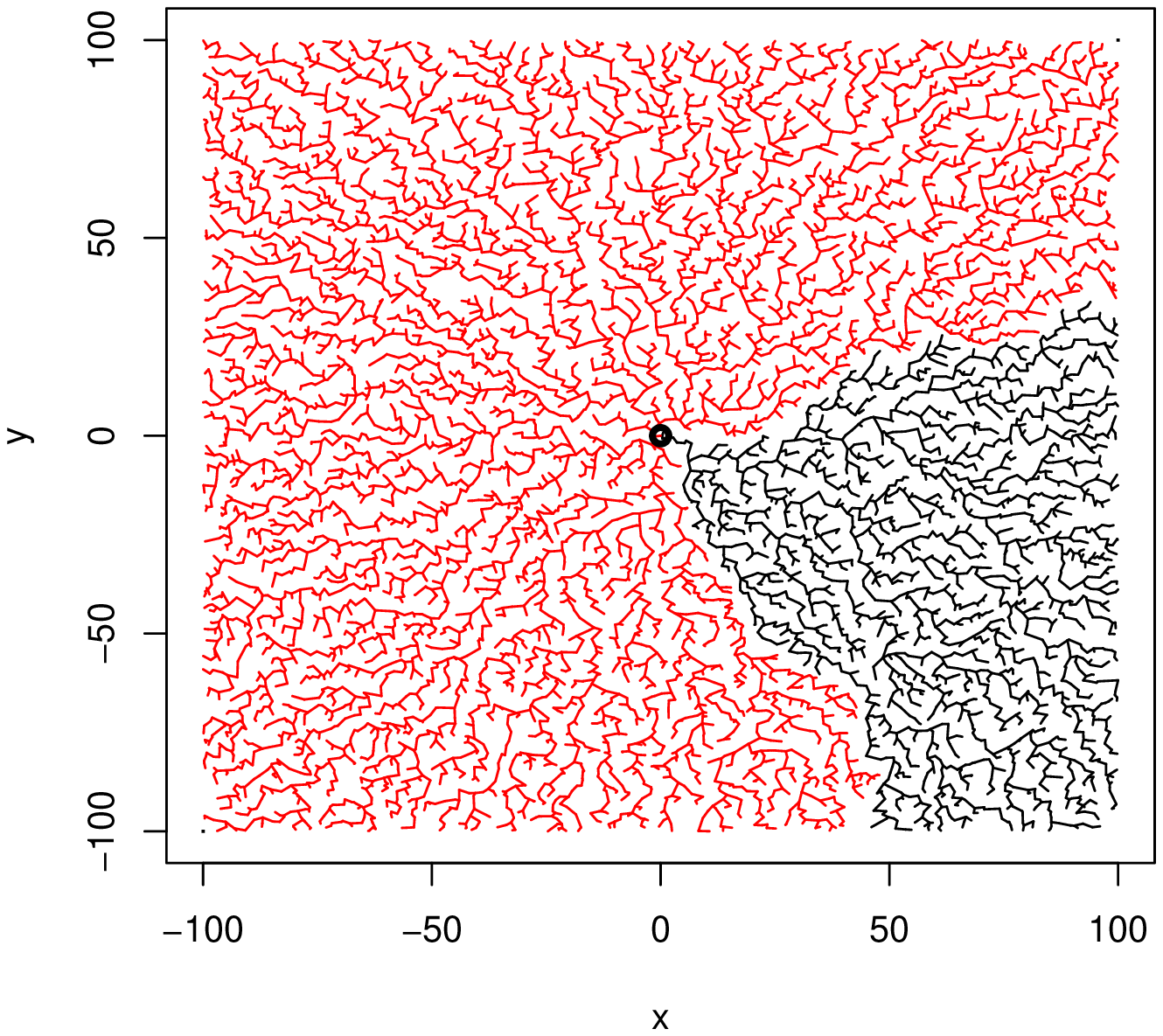}\\
(d) & (e) & (f) \\
\includegraphics[width = 0.3\textwidth,trim=0cm 0cm 0cm 0cm]{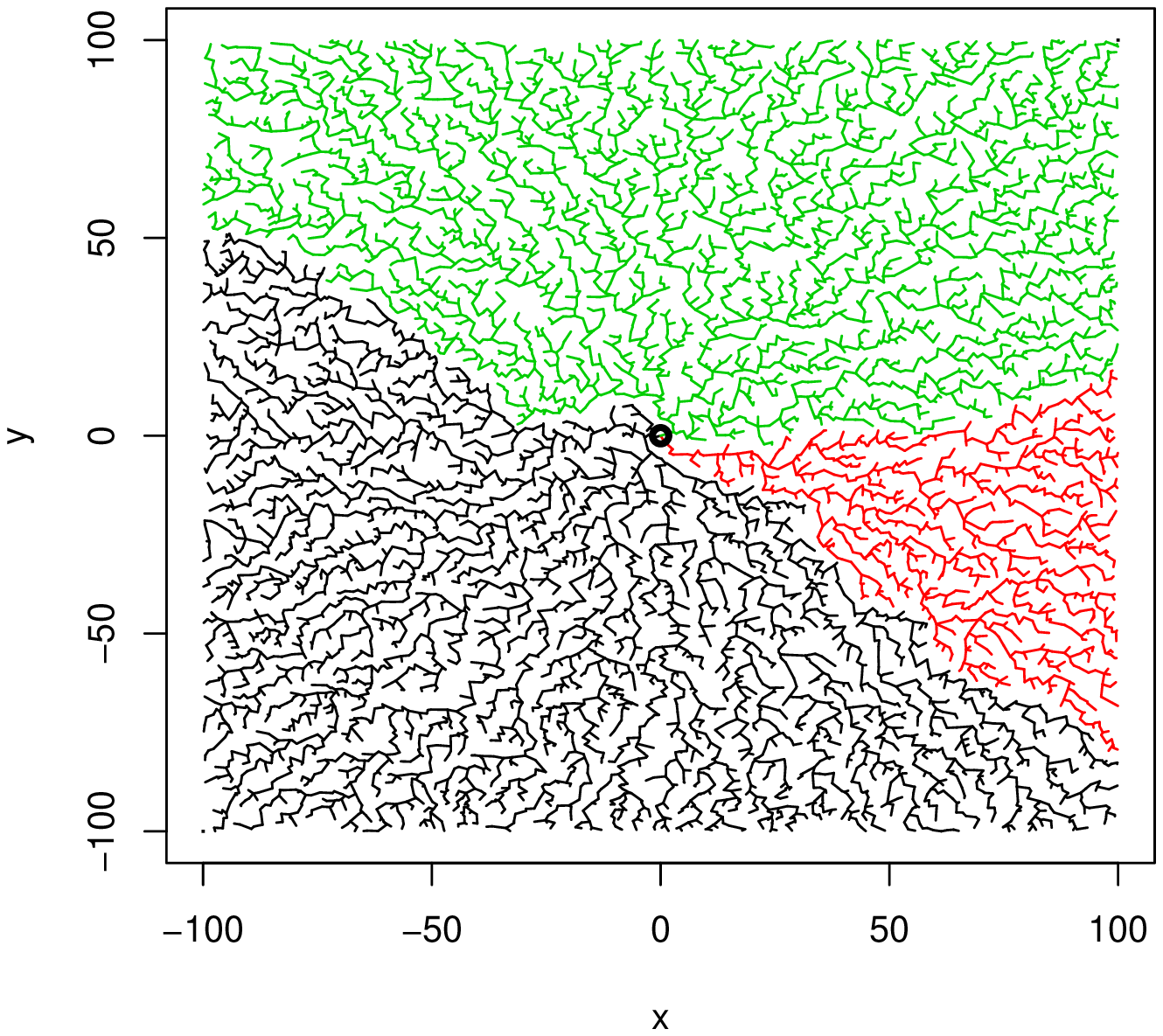} &
\includegraphics[width = 0.3\textwidth,trim=0cm 0cm 0cm 0cm]{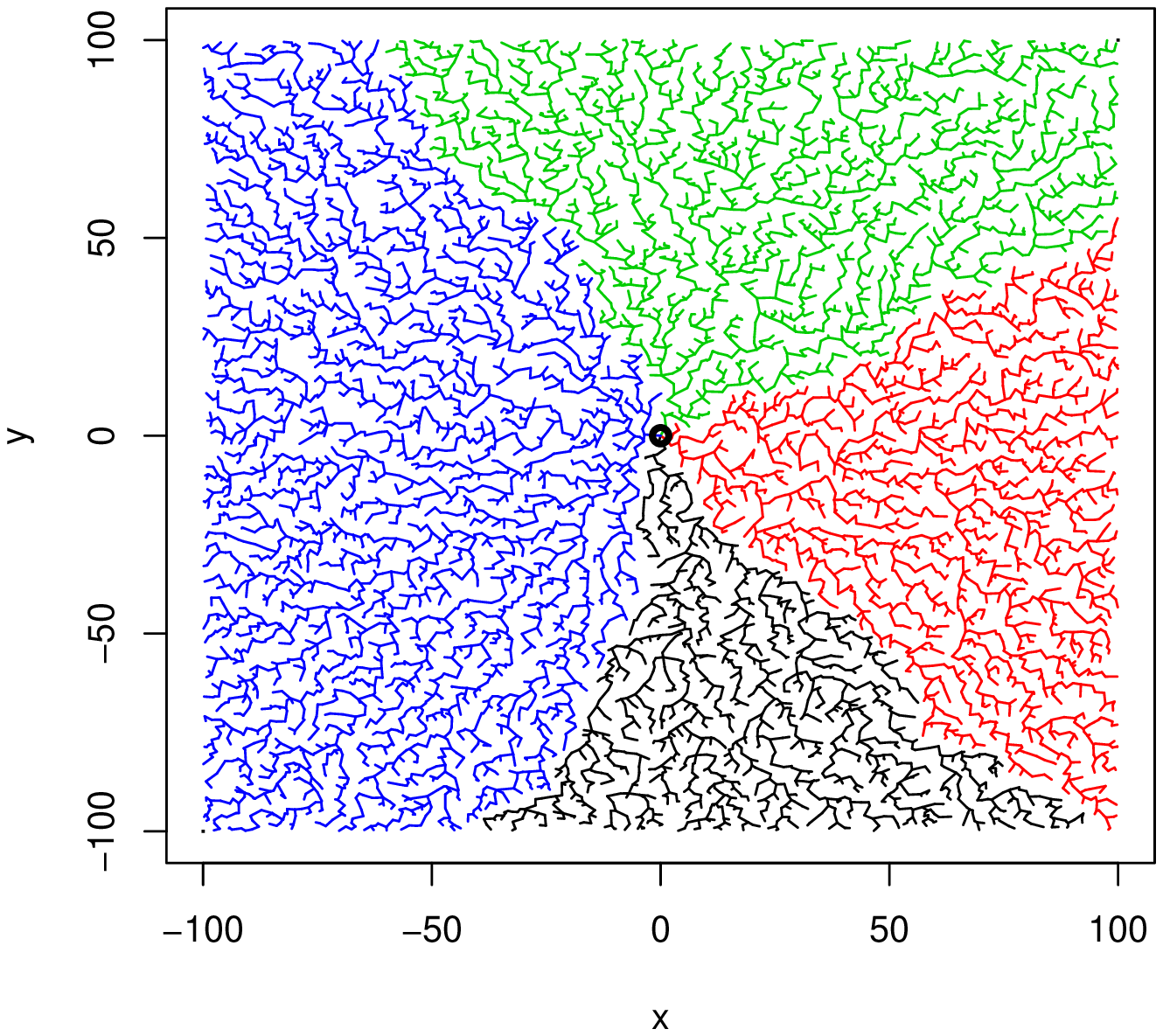} &
\includegraphics[width = 0.3\textwidth,trim=0cm 0cm 0cm 0cm]{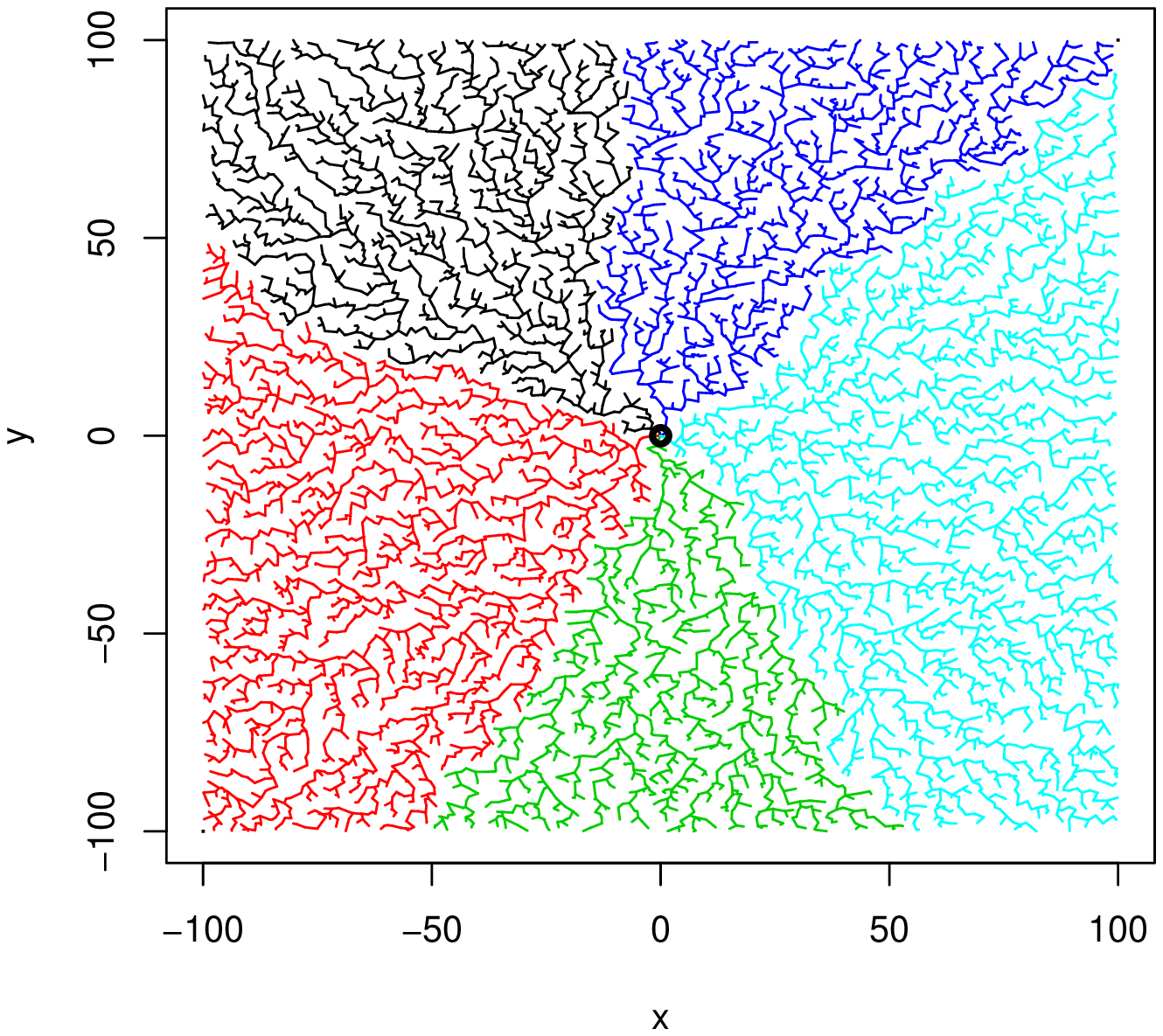}\\
\end{tabular}\end{center}\vspace{-0.5cm}
\caption{{\small \textit{(a) Empirical distribution for the number of children of $O$. Over 5000 simulations, 1 (resp. 2, 3, 4 and 5) child is obtained in 114 (resp. 2232, 2449,  203, 2) cases. Simulations of $m=1,\dots,5$ subtrees of the RST rooted at the children of $O$ are given from (b) to (f). They seem to be unbounded.}}}
\label{simu_fb}
\end{figure}

The next step is to define the \textit{competition interfaces}, i.e. the borders between the subtrees of the RST rooted at the children of the origin. To do so, let us introduce the spatially embedded version of the RST $\mathcal{T}$, denoted by $\mathbf{T}$, as the following subset of $\R^{2}$:
$$
\mathbf{T} = \bigcup_{X\in N\setminus \{O\}} [ X , \A(X) ] ~.
$$
For any positive real number $r$, the normalized trace of $\mathbf{T}$ over the sphere $S(O,r)$ is
$$
\mathbf{T}_{r} = \frac{1}{r} \left( \mathbf{T} \cap S(O,r) \right) ~.
$$
An element $u$ of $\mathbf{T}_{r}$ inherits its color from the element $ru\in\mathbf{T}$. So, for any given color $i$, we denote by $\mathbf{T}_{r}(i)$ the points of $\mathbf{T}_{r}$ with color $i$. By the noncrossing paths property of the RST, the points of $\mathbf{T}_{r}$ are ``gathered'' on the unit sphere $S(O,1)$ according to their color. This can be formalized as follows: for any $r>0$, $\theta_{1},\theta_{2},\theta_{3},\theta_{4}\in[0,2\pi)$ such that $(\theta_{1}-\theta_{3})(\theta_{2}-\theta_{3})>0$, $(\theta_{1}-\theta_{4})(\theta_{2}-\theta_{4})<0$,
$e^{\i\theta_{1}},e^{\i\theta_{2}}\in\mathbf{T}_{r}(i)$ and $e^{\i\theta_{3}},e^{\i\theta_{4}}\in\mathbf{T}_{r}$, at least one of the two points $e^{\i\theta_{3}}$ and $e^{\i\theta_{4}}$ is of color $i$.\\
For all $(\theta,\theta')\in[0,2\pi)^{2}$, let us denote by $a(\theta,\theta')$ (resp. $\overline{a}(\theta,\theta')$) the arc of the unit sphere from $e^{\i\theta}$ to $e^{\i\theta'}$ in the trigonometric sense, without (resp. with) the end points $e^{\i\theta}$ and $e^{\i\theta'}$. Furthermore, let $\T(i)$ be the subset of $\T$ with color $i$.

\begin{defn}[Competition interfaces]
\label{def:interface}
Given a couple of colors $(i,j)$ with $i\not=j$, there exists at most one couple $(\theta,\theta')\in[0,2\pi)^{2}$ such that
$$
e^{\i\theta} \in \mathbf{T}_{r}(i) , \; e^{\i\theta'} \in \mathbf{T}_{r}(j) \; \mbox{ and } \; a(\theta,\theta') \cap \mathbf{T}_{r} = \emptyset ~.
$$
When such a couple $(\theta,\theta')$ exists, we denote by $\theta_{r}(i,j)\in[0,2\pi)[$ the (direct) angle of the line coming from $O$ and bisecting the arc $a(\theta,\theta')$ in two equal parts. In this case, the competition interface between the sets $\T(i)$ and $\T(j)$ is defined as the curve:
$$
\varphi(i,j) = \{ r e^{\i\theta_r(i,j)}\in \C , \; \beta(i,j) < r < \partial(i,j) \} ~,
$$
where $\beta(i,j)$ and $\partial(i,j)$ are respectively defined as the infimum and the supremum of the set $\{r>0 , \theta_r(i,j) \mbox{ exists} \}$.
\end{defn}

\begin{figure}[!ht]
\begin{center}
\psfrag{i}{\small{$e^{\i\theta}$}}
\psfrag{j}{\small{$e^{\i\theta'}$}}
\psfrag{t}{\small{$\theta_{r}(i,j)$}}
\psfrag{d}{\small{$\Delta$}}
\includegraphics[width=6cm,height=6cm]{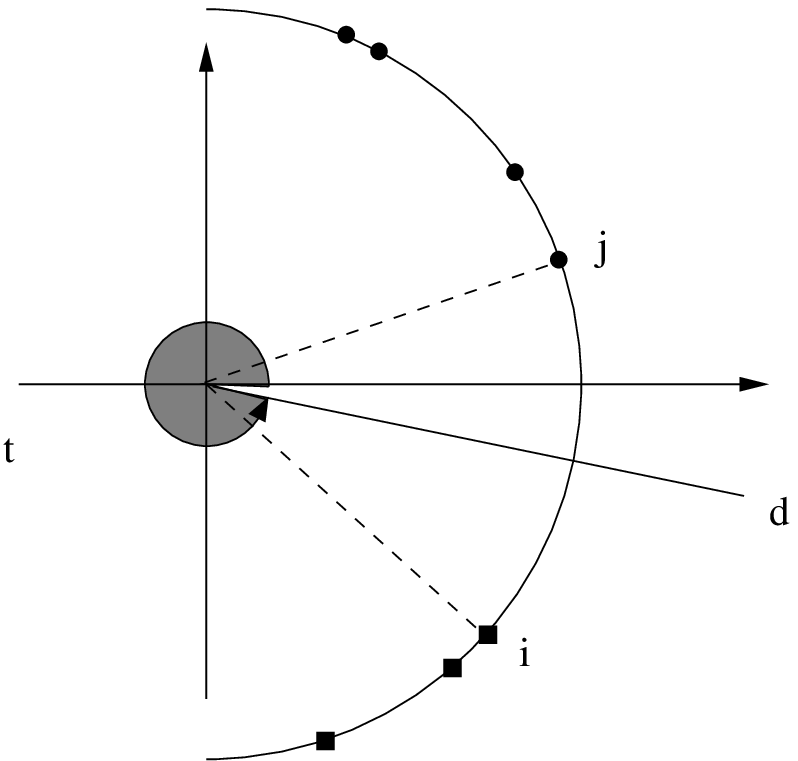}
\end{center}
\caption{\label{fig:angle_ij} {\small \textit{On the unit sphere, the black squares are points of $\mathbf{T}_{r}(i)$ while black circles are points of $\mathbf{T}_{r}(j)$. The arc $a(\theta,\theta')$ is divided in two equal parts by the line $\Delta$ whose angle (represented in grey) is $\theta_{r}(i,j)$.}}}
\end{figure}

From $\beta(i,j)$ to $\partial(i,j)$, the trees $\T(i)$ and $\T(j)$ evolve in the plane side by side, separated by the competition interface $\varphi(i,j)$. The real numbers $\beta(i,j)$ and $\partial(i,j)$ can respectively be interpreted as the birth and death times of the competition interface $\varphi(i,j)$. When $\partial(i,j)=+\infty$, both sets $\T(i)$ and $\T(j)$ are unbounded. When $\partial(i,j)<+\infty$, one of the two sets $\T(i)$ and $\T(j)$ is included in the closed ball $\overline{B}(O,\partial(i,j))$, say $\T(j)$. In this case, $\partial(i,j)$ coincides with another death time $\partial(j,k)$ and two situations may occur according to the color $k$. Either $k=i$ which means $i$ is the only existing color outside the ball $\overline{B}(O,\partial(i,j))$ and there is no competition interface beyond that ball. Or $k$ is a third color (different from $i$ and $j$). Then, the competition interface $\varphi(i,k)$ extends $\varphi(i,j)$ and $\varphi(j,k)$ (until its de!
 ath time $\partial(k,j)$). Its birth time satisfies:
$$
\beta(i,k) = \partial(i,j) = \partial(j,k) > 0 ~.
$$
Let us remark that the application $r\mapsto\theta_{r}(i,j)$ may be discontinuous. Finally, notice that $\theta_r(i,j)\not= \theta_r(j,i)$ and that one may exist and the other not. So, we distinguish the interfaces $\varphi(i,j)$ and $\varphi(j,i)$.\\

Our first result states there can be up to five unbounded competition interfaces with positive probability.

\begin{thm}
\label{arbresinfinis}
For any $m\in\{1,2,3,4,5\}$, there exist (exactly) $m$ unbounded subtrees of $\mathcal{T}$ with different colors, with positive probability. In other words, for any $m\in\{0,2,3,4,5\}$, there exist (exactly) $m$ unbounded competition interfaces, with positive probability.
\end{thm}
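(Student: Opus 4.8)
The plan is to establish that each value $m\in\{1,2,3,4,5\}$ for the number of unbounded colored subtrees is attained with positive probability by constructing, for each $m$, an explicit configuration of the PPP $N$ in a bounded region near the origin that forces exactly $m$ children of $O$ to have unbounded descendance, and then arguing that this configuration has positive probability. The central device is a local event $E_m$, depending only on $N\cap B(O,R)$ for some fixed $R$, that (a) prescribes exactly $m$ children of $O$, and (b) geometrically constrains the angular sectors available to each child so that exactly the desired number of subtrees can escape to infinity. Since $N$ is a Poisson process, any event determined by the configuration in a bounded ball that can be realized by placing finitely many points in prescribed open regions (and keeping a surrounding annulus empty) has positive probability; the main work is to \emph{decouple} the local prescription near $O$ from the global survival/extinction of the subtrees at infinity.

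\textbf{First,} I would treat the two extreme and most instructive cases. For $m=5$, I want a configuration with five children of $O$ spread nearly symmetrically around the origin, each ``owning'' an angular sector of width roughly $2\pi/5$; the goal is to show each of the five subtrees is unbounded. For $m=1$, I want a configuration where a single child $X$ of $O$ sits very close to $O$ and dominates, forcing all other points to descend from $X$ (or from $O$ through $X$), so that only one subtree survives. The key structural fact I would lean on, already available in the excerpt, is that \emph{every} deterministic direction $\theta$ carries a.s. a unique semi-infinite path (Proposition \ref{prop:<2}), and by Theorem \ref{HN1}(ii) such paths exist in all directions. Combined with the non-crossing/planarity property (Lemma \ref{lemm:croisement}), a child $X$ of $O$ whose sector of the unit circle contains a deterministic direction $\theta$ must have an unbounded subtree, because the unique semi-infinite path in direction $\theta$ emanating from $O$ is forced through $X$.

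\textbf{The crux} is therefore to convert ``$X$ owns an open angular sector'' into ``$\mathcal{T}_X$ is unbounded'' and, conversely, to suppress unwanted subtrees. For suppression (needed for $m<5$), I would place the extra children of $O$ so close to the root, or so hemmed in by their siblings, that by the RST rule \eqref{defRST} every point of $N$ far from $O$ selects an ancestor routing it away from that child; concretely, one arranges that the sector assigned to the unwanted child contains \emph{no} deterministic direction surviving to infinity, i.e. its angular interval on $S(O,1)$ collapses. For survival, I would exploit that once a child $X$ controls a genuine open arc of directions at radius $R$, the non-crossing property pins the semi-infinite paths of those intermediate directions inside $\mathcal{T}_X$, giving unboundedness. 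Decoupling is handled by conditioning only on $N\cap B(O,R)$, requiring a point-free annulus $B(O,R')\setminus B(O,R)$ to freeze the ancestry structure inside, and then letting the complement of the ball be a fresh independent PPP whose a.s. behaviour (existence of semi-infinite paths in every direction, Theorem \ref{HN1}) is unaffected by the conditioning.

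\textbf{The main obstacle} I anticipate is rigorously guaranteeing the survival count is \emph{exactly} $m$ rather than merely ``at least $m$'' or ``at most $m$''. Getting a lower bound on the number of unbounded subtrees is comparatively easy via the deterministic-direction argument above; the genuinely delicate part is the upper bound, i.e. certifying that a prescribed child has a \emph{bounded} subtree with positive probability, since boundedness is a global event not determined by any bounded region. I would address this by showing that the relevant child's angular sector has zero surviving width (its left-most and right-most semi-infinite paths $\underline{\gamma}_X,\overline{\gamma}_X$, if they existed, would be squeezed to coincide with a sibling's boundary path), and then invoking Proposition \ref{prop:<2}: two distinct children cannot both carry the unique path of a shared boundary direction, so one of them must die. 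The final statement about competition interfaces is then immediate bookkeeping: $m$ unbounded subtrees arranged cyclically around $O$ produce exactly $m$ unbounded interfaces when $m\geq 2$, and none when $m=1$, matching the claimed set $\{0,2,3,4,5\}$.
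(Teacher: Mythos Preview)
Your overall strategy---combine the a.s.\ existence of semi-infinite paths (Theorem~\ref{HN1}) with a local modification of the PPP near $O$---is in the right spirit and close to what the paper does. However, the central step of your survival argument contains a genuine gap. You assert that ``a child $X$ of $O$ whose sector of the unit circle contains a deterministic direction $\theta$ must have an unbounded subtree, because the unique semi-infinite path in direction $\theta$ emanating from $O$ is forced through $X$.'' The difficulty is that the ``sector owned by $X$'' is not determined by the local configuration in $B(O,R)$: it is \emph{defined} as the set of asymptotic directions whose semi-infinite paths pass through $X$, which is a global object. Merely placing $X$ at angular position $\theta$ does not force the semi-infinite path of direction $\theta$ to route through $X$ rather than through a sibling; the RST enjoys no such angular monotonicity. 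Your decoupling device (empty annulus plus fresh PPP outside) freezes the ancestries \emph{within} $B(O,R)$ but gives no control over which child the incoming paths from outside ultimately attach to. The argument is therefore circular at exactly the point where the work lies.

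The paper sidesteps this with a different mechanism. It first fixes integers $r_1,\ldots,r_m$ so that, with positive probability, the semi-infinite paths of directions $2k\pi/m$ lie inside thin cones $C_{2k\pi/m,\varepsilon,r_k}$ from radius $r_k$ onward (event $A_\varepsilon$). The key observation is that $A_\varepsilon$ depends only on $N$ restricted to those cones, so one may \emph{thin} $N$ by deleting every point of $B(O,R)$ outside the cones without destroying $A_\varepsilon$. One then resamples the deleted region by an independent PPP $\hat N$ conditioned on a very explicit event $B_\varepsilon$: chains of closely spaced points along each ray $\{t e^{\i 2k\pi/m}\}$ from near $O$ up to the mouth of the $k$-th cone, together with ``landing runways'' bordering the cones to prevent a path from drifting into a neighbouring cone's chain. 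These chains \emph{force} each $\gamma_k$ to connect to a distinct child $Y_k$ of $O$, eliminating any appeal to abstract sector ownership. For $m\in\{3,4\}$ an extra local condition pins the degree of $O$ at exactly $m$; $m=2$ uses a cardioid-shaped arrangement; $m=1$ is a hand-built six-point configuration guaranteeing $O$ has a single child. Your suppression argument via ``zero surviving width'' and Proposition~\ref{prop:<2} is then unnecessary, since the number of children of $O$ is controlled exactly and each is shown to carry an explicit semi-infinite path.
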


\begin{table}[ht]
\begin{center}
\begin{tabular}[p]{cc}
(a) & (b) \\
\includegraphics[width = 0.3\textwidth,trim=0cm 0cm 0cm 0cm]{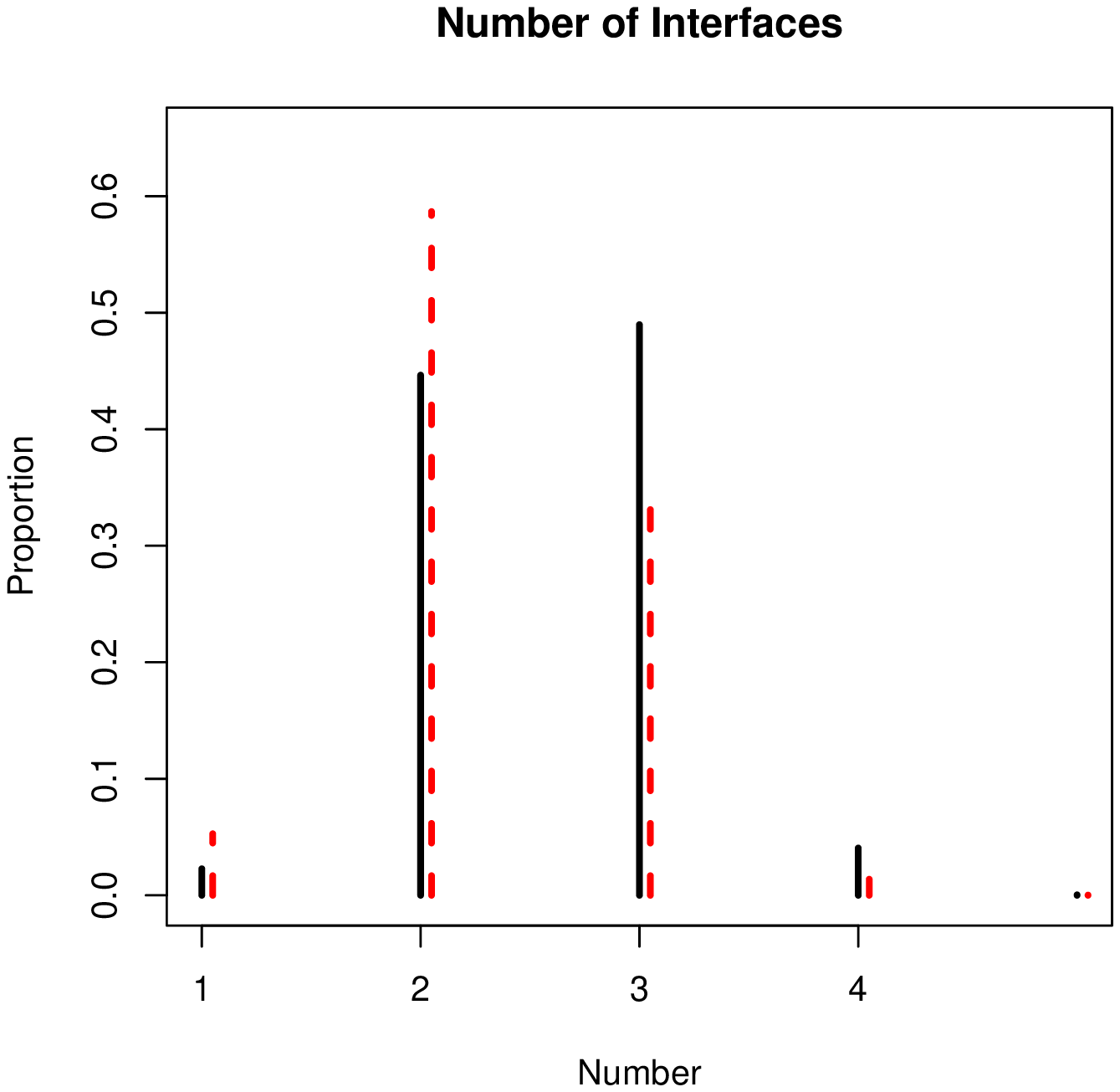}
&
\vtop{\vspace{-3cm} \hbox{
\begin{tabular}{|c|ccccc|}
\hline
m & 1 & 2 & 3 & 4 & 5 \\
\hline
Children of $O$ & 2.28 & 44.64 & 48.98&  4.06 & 0.04 \\
Unbounded subtrees & 5.28& 58.68 & 34.64 & 1.38 & 0.02 \\
\hline
\end{tabular}}}
\end{tabular}
\end{center}\vspace{-0.5cm}
\caption{{\small \textit{(a) Empirical distributions, obtained after $N=5000$ simulations, for the number of children of $O$ (in black plain lines) and for the number of unbounded subtrees (in red dotted lines). Percentages are given in the table (b). The two distributions are different since the tree associated with each child of $O$ is not necessarily unbounded. About its second line, let us point out the cases $m\in\{4,5\}$ are very rare (less than $2\%$ of the simulations) compared with the cases $m\in\{2,3\}$ (more than $93\%$). Actually, configurations corresponding to $m\in\{4,5\}$ are very constrained around the origin, therefore rare.}}}
\label{fig3}
\end{table}

Our proof relies on Part $(i)$ of Theorem \ref{HN1}. Thinning and local modification of the PPP are other ingredients.

\begin{proof}
We consider the cases $m\in \{1,\dots,5\}$ separately.\\
\fbox{$\mathbf{m=5}$} Our purpose is to construct a set of configurations of $N$, with a positive probability, on which there are five children of the origin $O$ giving birth to infinite subtrees.\\
For any $1\leq k\leq 5$, Part $(i)$ of Theorem \ref{HN1} ensures the existence a.s. of a semi-infinite path $\gamma_{k}$ with asymptotic direction $2k\pi/5$. Hence, for $\varepsilon>0$ and with probability $1$, there exists a (random) radius $r_{k}$ such that $\gamma_{k}$ is included in the cone section
$$
C_{2k\pi/5,\varepsilon,r_{k}} = \left\{ \rho e^{\i\theta} \; ; \; \rho > r_{k} \; \mbox{ and } \; \left| \theta- 2k\pi/5 \right| < \varepsilon \right\}
$$
for any integer $1\leq k\leq 5$. Without loss of generality, we can require that $\gamma_{k}$ starts from a vertex $X_{k}\in N$ whose norm satisfies $r_{k}<|X_{k}|\leq r_{k}+1$ and for $r_{k}$ to be a positive integer. Hence, writing
$$
A_{\varepsilon}(r_{1},\ldots,r_{5}) = \left\{ \begin{array}{c}
\mbox{ for any } 1\leq k\leq 5 , \mbox{ there exists a semi-infinite path } \gamma_{k} \\
\mbox{ included in the cone } C_{2k\pi/5,\varepsilon,r_{k}} \mbox{ and starting from } \\
\mbox{ a vertex } X_{k} \mbox{ satisfying } r_{k}<|X_{k}|\leq r_{k}+1
\end{array} \right\} ~,
$$
we get that for all $\varepsilon>0$, there exist some (deterministic) radii $r_{1},\ldots,r_{5}\in\N^*$ such that $A_{\varepsilon}(r_{1},\ldots,r_{5})$ occurs with positive probability.\\
Let $R=\max\{r_{k}+1 ; 1\leq k\leq 5\}$ and $V_{\varepsilon}(r_{1},\ldots,r_{5})$ be the complementary set of the five cones in the ball $B(O,R)$:
$$
V_{\varepsilon}(r_{1},\ldots,r_{5}) = B(O,R) \setminus \Big[\Big( \cup_{k=1}^{5} C_{2k\pi/5,\varepsilon,r_{k}} \Big) \cup \{O\} \Big] ~.
$$
Now, we are going to change the configuration of the PPP $N$ in $V_{\varepsilon}(r_{1},\ldots,r_{5})$ in such a way that the $X_k$'s are all of different colors. Let $\widetilde{N}=N\cap V_{\varepsilon}^{c}(r_{1},\ldots,r_{5})$ be the thinned PPP obtained by deleting all the points of $N$ belonging to $V_{\varepsilon}(r_{1},\ldots,r_{5})$ (\eg Jacod and Shiryaev \cite{jacod}, II.4.b). It is crucial to remark that deleting the points of $V_{\varepsilon}(r_{1},\ldots,r_{5})$ does not affect the occurrence of $A_{\varepsilon}(r_{1},\ldots,r_{5})$. In other words, if $N$ satisfies the event $A_{\varepsilon}(r_{1},\ldots,r_{5})$, so does $\widetilde{N}$;
$$
\P \left( \widetilde{N} \in A_{\varepsilon}(r_{1},\ldots,r_{5}) \right) \geq \P \left( N \in A_{\varepsilon}(r_{1},\ldots,r_{5}) \right) > 0 ~.
$$
Now, let us consider a PPP $\hat{N}$ on $V_{\varepsilon}(r_{1},\ldots,r_{5})$ with intensity $1$. Let us denote by $r$ the minimum of the $r_k$'s.\\
The event $\hat{N}\in B_{\varepsilon}(r_{1},\ldots,r_{5})$ is defined by the three following conditions.
\begin{itemize}
\item[$(\clubsuit)$] For any $k\in\{1,\dots,5\}$, if $r_{k}>r$ then for all integers $r \leq n \leq r_{k}-1$,
$$
\hat{N} \left( B(n e^{\i 2k\pi/5} , \varepsilon) \right) = 1 ~,
$$
else
$$
\hat{N} \left( B(r e^{\i 2k\pi/5} , \varepsilon) \cap B(O,r)  \right) = 1 ~.
$$
\item[$(\diamondsuit)$] For any $k\in \{1,\ldots,5\}$ and for all integers $n$ such that $0\leq n\leq (R-r_k-1)/2\varepsilon$:
$$
\hat{N}\left(B\big((r_k+1+2n\varepsilon) e^{\i (2k\pi/5\pm2\varepsilon)},\varepsilon\big) \cap V_{\varepsilon}(r_{1},\ldots,r_{5}) \right)=1 ~.
$$
\item[$(\heartsuit)$] The previous points are the only ones of $\hat{N}$.
\end{itemize}
It is clear that the event $\hat{N}\in B_{\varepsilon}(r_{1},\ldots,r_{5})$ occurs with positive probability, for all $\varepsilon>0$. Roughly speaking, the points of $\hat{N}$ introduced in $(\clubsuit)$ form a chain from $re^{\i 2k\pi/5}$ to $(r_{k}-1)e^{\i 2k\pi/5}$, for any index $k$ such that $r_{k}>r$. See Figure \ref{fig:5arbres}.

\begin{figure}[!ht]
\begin{center}
\includegraphics[width=11cm,height=11cm]{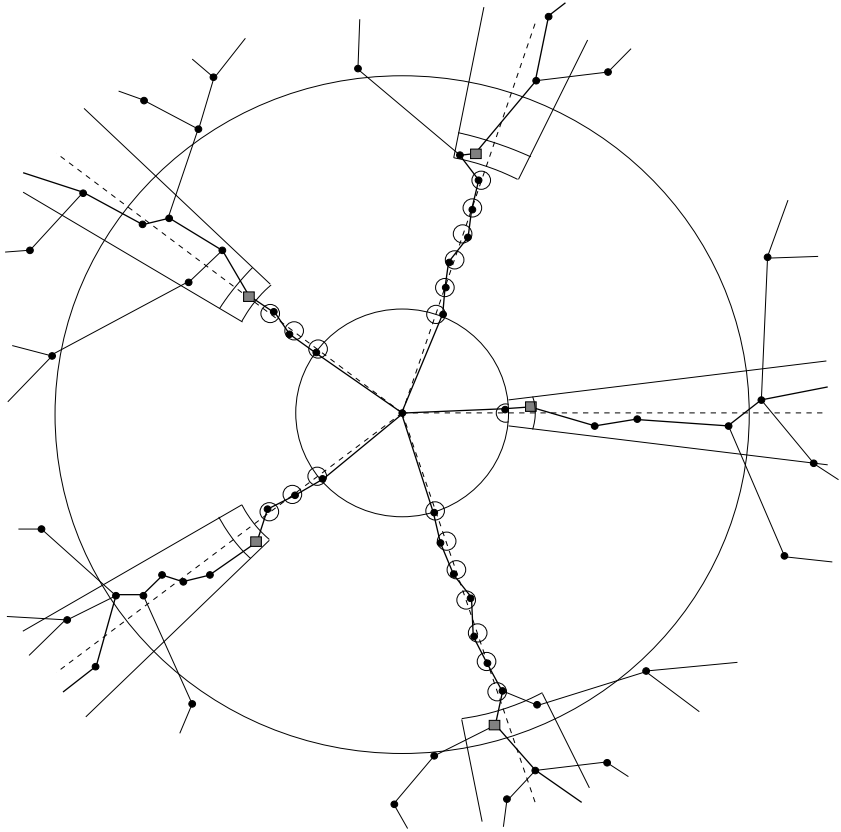}
\end{center}
\caption{\label{fig:5arbres} {\small \textit{RST of the PPP $N$ satisfying both events $A_{\varepsilon}(r_{1},\ldots,r_{5})$ and $B_{\varepsilon}(r_{1},\ldots,r_{5})$. Beware the fact that, in order not to overload the figure, the condition $(\diamondsuit)$ of $B_{\varepsilon}(r_{1},\ldots,r_{5})$ has not been represented. The two balls are centered at $O$ with radii $r=\min_{k\in \{1,\dots,5\}} r_{k}$ and $R=\max_{k\in \{1,\dots,5\}} r_{k}+1$. The $X_{k}$'s are represented by big gray squares while the other points of $N$ by small black circles.}}}
\end{figure}

\noindent
On Figure \ref{fig:5arbres}, imagine that $R=r_{4}+1$ is much larger than $r=r_{5}$ (indeed, we have no control on the $r_{k}$'s). Henceforth, the semi-infinite path $\gamma_{5}$ could prefer to branch on the points of $\hat{N}$ introduced in $(\clubsuit)$ and with direction $8\pi/5$ rather than on $X_{5}$. To prevent this situation from occurring, we contain each path $\gamma_k$ in the cone $C_{2k\pi/5,3\varepsilon,r_{k}}$ thanks to the points of $\hat{N}$ introduced in $(\diamondsuit)$. These points form ``landing runways'' for the $\gamma_{k}$'s (they may also change slightly the $\gamma_{k}$'s).\\
Let us denote by $A_{\varepsilon}$ and $B_{\varepsilon}$ the events $A_{\varepsilon}(r_{1},\ldots,r_{5})$ and $B_{\varepsilon}(r_{1},\ldots,r_{5})$. Then,
$$
\{ \widetilde{N} \in A_{\varepsilon} \} \cap \{ \hat{N} \in B_{\varepsilon} \} \subset \{ \widetilde{N}+\hat{N} \in A_{3\varepsilon} \cap B_{\varepsilon} \} ~,
$$
where $\widetilde{N}+\hat{N}$ denotes the superposition of the two processes $\hat{N}$ and $\widetilde{N}$. These two processes can also be assumed independent. In this case, $\widetilde{N}+\hat{N}$ is still a PPP on $\mathbb{R}^{2}$. It follows:
\begin{eqnarray*}
\P ( N \in A_{3\varepsilon} \cap B_{\varepsilon} ) & = & \P ( \widetilde{N}+\hat{N} \in A_{3\varepsilon} \cap B_{\varepsilon} ) \\
& \geq & \P ( \widetilde{N} \in A_{\varepsilon} \; , \hat{N} \in B_{\varepsilon} ) \\
& \geq & \P ( \widetilde{N} \in A_{\varepsilon} ) \P ( \hat{N} \in B_{\varepsilon} ) \; > \; 0 ~.
\end{eqnarray*}
To conclude the proof, it remains to prove that the above event implies the existence of (at least) five unbounded subtrees of $\mathcal{T}$ with different colors. Actually, there will be exactly five ones since the degree of $O$ is a.s. upperbounded by $5$. Let us denote by $Y_{k}$ the point of $N$ belonging to the ball $B(r e^{\i 2k\pi/5},\varepsilon)$. On the event $N\in A_{3\varepsilon}\cap B_{\varepsilon}$, the point $X_{k}$ is a descendant of $Y_{k}$ for any $k$. Hence, the subtrees rooted at $Y_{1},\ldots,Y_{5}$ are unbounded. Finally, it suffices to remark the $Y_{k}$'s have $O$ as common ancestor. Indeed, each $Y_{k}$ is at distance from $e^{\i 2k\pi/5}$ smaller than $\varepsilon$. So,
\begin{eqnarray*}
| Y_{k+1} - Y_{k} | & \geq & | re^{\i 2(k+1)\pi/5} - re^{\i 2k\pi/5} | - 2 \varepsilon \\
& \geq & 2 r \sin(\pi/5)-2\varepsilon\\ %r \sqrt{2(1 - \cos(2\pi/5))} - 2 \varepsilon \\
& \geq & 1.17 r - 2 \varepsilon ~,
\end{eqnarray*}
which is larger than the maximal distance between $Y_{k}$ and $O$, i.e. $r+\varepsilon$, for $\varepsilon$ small enough (using $r\in\N^*$).

\medskip
\noindent
\fbox{$\mathbf{m\in\{3,4\}}$} The previous construction applied to $m\in\{3,4\}$ allows us to state that with positive probability, the origin $O$ has at least $m$ descendants from which $m$ unbounded trees arise. Now, so as to ensure the number of unbounded subtrees of different colors is exactly $m$, an additional precaution must be taken. Precisely, a fourth condition is added to the event $\hat{N}\in B_{\varepsilon}$:
\begin{itemize}
\item[$(\spadesuit)$] For any $k\in\{1,\dots,m\}$, the argument of the point $Y_{k}$ of $\hat{N}\cap B(r e^{\i 2k\pi/m},\varepsilon)$ belongs to $(2k\pi/m-\varepsilon,2k\pi/m)$.
\end{itemize}
Thanks to $(\spadesuit)$, each sector of the ball $B(O,r+1)$ with angle $2\pi/m$ contains (at least) one of the points $Y_1,\dots Y_m$. Assume $N\in A_{3\varepsilon}\cap B_{\varepsilon}$ which still occurs with positive probability. By construction, the origin $O$ has exactly $m$ children in the ball $B(O,R)$. Let us consider a point $X\in N\setminus \{O,Y_1,\dots,Y_m\}$ such that $|X|\geq R\geq r+1$. Then $B(O,|X|)\cap B(X,|X|)$ contains a sector of the ball $B(O,r+1)$ with angle $2\pi/3$ and so one of the $Y_1,\dots Y_m$. The origin $O$ cannot be the ancestor of $X$. This proves that $O$ is exactly of degree $m$ and ends the proof.\\
This latter argument no longer works when $m$ is equal to $1$ or $2$.

\medskip
\noindent
\fbox{$\mathbf{m=2}$} Following the construction for $m=5$, there exists $r_1 $ and $r_2>0$ such that there exist with positive probability two semi-infinite paths $\gamma_1$ and $\gamma_2$ included in the cones $C_{0,\varepsilon,r_1}$ and $C_{\pi,\varepsilon,r_2}$. The following event has a positive probability:
\begin{itemize}
 \item For a given increasing subsequence $(\theta_j)_{j\in \N}$ of $[0,\pi)$ with a sufficiently small step, and for a sufficiently small $\varepsilon>0$:
$$N\Big(B\big((r_1\wedge r_2)(1+\cos(\theta_j))e^{\i \theta_j},\varepsilon\big)\Big)=1,\qquad N\Big(B\big(-(r_1\wedge r_2)(1+\cos(\theta_j))e^{\i \theta_j},\varepsilon\big)\Big)=1,$$
\item For all integers $n$ and $m$ such that $0\leq n\leq (r_2 -r_1)/2\varepsilon$ and $0\leq n\leq (r_1 -r_2)/2\varepsilon$, if they exist:
$$N\Big(B\big((r_1+2n\varepsilon,0),\varepsilon\big)\Big)=1,\qquad N\Big(B\big((0,r_2-2n\varepsilon),\varepsilon\big)\Big)=1.$$
\item The rest of $B(O,r_1\vee r_2)$ is empty.
\end{itemize}
The idea is that in $B(O,r_1\wedge r_2)$, the points are roughly aligned following the reunion of two cardiods $\{\rho(\theta)=\pm (1+\cos(\theta)),\, \theta\in [0,\pi)\}$. Notice that this curve is differentiable at $O$ with  a horizontal tangent. If $r_1<r_2$, we add points along the line segment $[(r_1,0),(r_2,0)]$. If the $\theta_j$'s define a sufficiently fine subdivision of $[0,\pi)$, then there cannot be more than two descendants of $O$ by construction. We conclude as in the case $m=5$.\\
We remark that the two semi-infinite paths previously built have asymptotic directions opposed to the argument of the descendant of $O$ from which they stem.

\medskip
\noindent
\fbox{$\mathbf{m=1}$} Since the RST $\mathcal{T}$ is unbounded, it suffices to prove that the origin $O$ may have only one child with positive probability.\\
From $z_{1}=e^{\i\pi/3}$, we build five complex numbers $z_{2},\ldots,z_{6}$ by the following induction: for $k\geq 2$, $z_{k}=|z_{k}| e^{\i k\pi/3}$ whose modulus $|z_{k}|$ is such that $|z_{k}-z_{k-1}|<|z_{k}|$. This construction forces $|z_{k}|>|z_{k-1}|$. Let $\varepsilon>0$ small enough such that $|z_{k}|-\varepsilon>|z_{k-1}|+\varepsilon$. Hence, the six balls $B(z_{1},\varepsilon),\ldots,B(z_{6},\varepsilon)$ do not overlap. Let $\Omega_{\varepsilon}$ be the event
$$
\forall 1\leq k \leq 6 , \; N(B(z_{k},\varepsilon)) = 1 \; \mbox{ and } \; N(B(O,|z_{6}|+\varepsilon)) = 7
$$
(these 7 points including the origin). For all $\varepsilon>0$, $\P(\Omega_{\varepsilon})>0$. So, it remains to choose $\varepsilon>0$ small enough in order to ensure that, on the event $\Omega_{\varepsilon}$, the origin $O$ has only one child.\\
Let us denote by $X_{k}$ the point of $N\cap B(z_{k},\varepsilon)$. Since $N\cap B(O,|X_{1}|)$ is reduced to $O$, the ancestor of $X_{1}$ is the origin $O$. Thus, for $2\leq k\leq 6$, we can choose $\varepsilon$ such that
$$
|X_{k}-X_{k-1}| \leq |z_{k}-z_{k-1}| + 2\varepsilon < |z_{k}-O| - \varepsilon \leq |X_{k}-O| ~.
$$
This condition does not prove that $X_{k-1}$ is the ancestor of $X_{k}$, but it is not $O$. Finally, let $X$ be a point of the PPP $N$ which does not belong to $B(O,|z_{6}|+\varepsilon)$. The set $B(X,|X|)\cap B(O,|X|)$ contains an angular sector of the ball $B(O,|z_{6}|+\varepsilon)$ with central angle $2\pi/3$. So, it also contains one of the $X_{k}$'s, preventing $X$ from being a child of $O$. To sum up, $X_{1}$ is the only child of the origin $O$.
\hfill $\Box$ \end{proof}

Let $\Omega(i,j)$ be the event corresponding to an unbounded competition interface $\varphi(i,j)$. It occurs with a positive probability thanks to Theorem \ref{arbresinfinis}. Recall that $\varphi(i,j)$ separates the two colored subtrees $\T(i)$ and $\T(j)$ according to the trigonometric sense.\\
The next result states that $\varphi(i,j)$ has a.s. an asymptotic direction on the event $\Omega(i,j)$. In other words, if $\T(i)$ is unbounded then it asymptotically behaves as a cone.

\begin{prop}
\label{prop:cvps}
On the event $\Omega(i,j)$, the sequence $(\theta_{r}(i,j))_{r>\beta(i,j)}$ converges a.s. to a random angle $\theta(i,j)\in[0,2\pi)$.
\end{prop}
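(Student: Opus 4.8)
The plan is to trap the interface angle $\theta_r(i,j)$ between the two semi-infinite paths that border it, and to prove that these two paths share a single asymptotic direction. Let $U$ and $V$ be the children of $O$ rooting the colored subtrees $\T(i)=\mathcal{T}_U$ and $\T(j)=\mathcal{T}_V$. On $\Omega(i,j)$ both subtrees are unbounded, so the extreme semi-infinite paths defined at the beginning of Section~\ref{section:directiondeterministe} are available. Since $j$ sits immediately counterclockwise from $i$ across the interface, the relevant boundary paths are the counterclockwise-most path $\overline{\gamma}_U$ of $\T(i)$ and the clockwise-most path $\underline{\gamma}_V$ of $\T(j)$. By Part $(i)$ of Theorem~\ref{HN1} these admit asymptotic directions, say $\theta^+$ and $\theta^-$. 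Because the modulus strictly increases along any branch (the ancestor is always closer to $O$), for $r$ large each of these paths crosses $S(O,r)$ exactly once, at an angle $\phi^+(r)$, resp.\ $\phi^-(r)$, with $\phi^+(r)\to\theta^+$ and $\phi^-(r)\to\theta^-$. By the non-crossing property (Lemma~\ref{lemm:croisement}) the whole subtree $\T(i)$ is confined to the angular sector delimited by $\underline{\gamma}_U$ and $\overline{\gamma}_U$; hence the counterclockwise-most point of $\mathbf{T}_r(i)$, namely the endpoint $e^{\i\theta}$ of the gap arc defining $\theta_r(i,j)$, is exactly the crossing of $\overline{\gamma}_U$, so $\theta=\phi^+(r)$. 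Symmetrically $\theta'=\phi^-(r)$.

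The key step will be to show $\theta^+=\theta^-$. Suppose not, and pick $\alpha$ strictly between $\theta^+$ and $\theta^-$ on the gap side. By Part $(ii)$ of Theorem~\ref{HN1} there is a semi-infinite path with asymptotic direction $\alpha$; for $r$ large its crossing of $S(O,r)$ lies strictly between $\phi^+(r)$ and $\phi^-(r)$, hence inside the open arc $a(\theta,\theta')$. This crossing cannot be of color $i$ (it would lie counterclockwise of $\overline{\gamma}_U$, contradicting the confinement above) nor of color $j$, so it would be of a third color. But then $a(\theta,\theta')\cap\mathbf{T}_r\neq\emptyset$ and $\theta_r(i,j)$ would fail to exist, contradicting the fact that on $\Omega(i,j)$ the interface is alive, i.e.\ $\theta_r(i,j)$ exists, for every $r>\beta(i,j)$. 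Therefore $\theta^+=\theta^-=:\theta(i,j)$.

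Finally, since $\theta_r(i,j)$ is the bisecting angle of the arc $a(\theta,\theta')=a(\phi^+(r),\phi^-(r))$, it satisfies $\phi^+(r)\le\theta_r(i,j)\le\phi^-(r)$ (modulo $2\pi$), and both bounds converge to the common limit $\theta(i,j)$. By the squeeze, $\theta_r(i,j)\to\theta(i,j)$ almost surely on $\Omega(i,j)$, which is the claim.

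I expect the equality $\theta^+=\theta^-$ of the two boundary directions to be the main obstacle: this is where one must combine the existence statement of Theorem~\ref{HN1}$(ii)$ with planarity and with the survival of the interface throughout $(\beta(i,j),+\infty)$ on $\Omega(i,j)$. The remaining work is bookkeeping about crossings, but it requires some care to justify that the extreme colored crossings genuinely sit on $\overline{\gamma}_U$ and $\underline{\gamma}_V$, and that each of these paths meets every sufficiently large sphere exactly once so that $\phi^\pm(r)$ are well defined and converge to $\theta^\pm$.
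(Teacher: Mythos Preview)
Your approach coincides with the paper's: sandwich the interface between $\overline{\gamma}_U$ and $\underline{\gamma}_V$, show via Theorem~\ref{HN1}(i)--(ii) that these share an asymptotic direction, and squeeze. There is, however, a false intermediate claim. The assertion that ``the whole subtree $\T(i)$ is confined to the angular sector delimited by $\underline{\gamma}_U$ and $\overline{\gamma}_U$'' does not hold: in the construction of $\overline{\gamma}_U$ one selects at each step the child \emph{with infinite descendance} of largest angle, so a child with only finite progeny may well branch off on the counterclockwise side. Such finite color-$i$ branches can protrude beyond $\overline{\gamma}_U$, and hence the counterclockwise-most point $e^{\i\theta}$ of $\mathbf{T}_r(i)$ need not lie on $\overline{\gamma}_U$; the identity $\theta=\phi^+(r)$ is in general false. (A minor aside: the ``exactly once'' crossing claim is also not quite right---vertices along a branch have increasing moduli, but an edge segment may dip---though this is harmless for the limit.)

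Fortunately the argument is easily repaired along the paper's lines. For the squeeze one only needs $\phi^+(r)\le\theta_r(i,j)\le\phi^-(r)$, which follows because $e^{\i\phi^+(r)}\in\mathbf{T}_r(i)$ and $e^{\i\phi^-(r)}\in\mathbf{T}_r(j)$ cannot lie in the empty gap $a(\theta,\theta')$, together with the ``gathered'' property. For the key step $\theta^+=\theta^-$, your deduction that the $\alpha$-crossing lies in $a(\theta,\theta')$ is unjustified, but the conclusion survives: the semi-infinite path of direction $\alpha$ is not of color $i$ or $j$ (here confinement \emph{does} apply, since semi-infinite paths of $\mathcal{T}_U$ are trapped between the extreme ones), so it has a third color $k$; for large $r$ its crossing lies strictly between $\phi^+(r)$ and $\phi^-(r)$, and by the cyclic colour arrangement on $S(O,r)$ this places color $k$ between $i$ and $j$, so no direct $i$-to-$j$ gap exists and $\theta_r(i,j)$ is undefined---contradicting $\Omega(i,j)$. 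This is exactly the paper's route: it states that the region between $\overline{\gamma}_{X(i)}$ and $\underline{\gamma}_{X(j)}$ contains only finite paths (possibly including vertices of a third color), invokes Theorem~\ref{HN1}(i)--(ii), and then observes that the interface is trapped between the two boundary paths.
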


\begin{proof}
Let us consider the event $\Omega(i,j)$ satisfied. Let $X(i)$ and $X(j)$ be the children of the origin of color $i$ and $j$. On $\Omega(i,j)$, both subtrees $\mathcal{T}_{X(i)}$ and $\mathcal{T}_{X(j)}$ are unbounded. Recall that $\overline{\gamma}_{X(i)}$ denotes the highest (in the trigonometric sense) semi-infinite path in $\mathcal{T}_{X(i)}$ (see the proof of Proposition \ref{prop:<2}). In the same way, $\underline{\gamma}_{X(j)}$ is the lowest one in $\mathcal{T}_{X(j)}$. On $\Omega(i,j)$, the region delimited by $\overline{\gamma}_{X(i)}$ and $\underline{\gamma}_{X(j)}$ (in the trigonometric sense) only contains finite paths. It may also contain some vertices of a third color (different from $i$ and $j$). Then, by Parts $(i)$ and $(ii)$ of Theorem \ref{HN1}, $\overline{\gamma}_{X(i)}$ and $\underline{\gamma}_{X(j)}$ have the same asymptotic direction, say $\theta(i,j)$. To conclude it suffices to remark that the competition interface $\varphi(i,j)$ is trapped between !
 $\overline{\gamma}_{X(i)}$ and $\underline{\gamma}_{X(j)}$. It then admits the same direction.
\hfill $\Box$ \end{proof}

Proposition \ref{prop:cvps} says that every competition interface that separates the colors $i$ and $j$ has an asymptotic direction $\theta(i,j)$. The next proposition states a result on the distribution of the asymptotic directions, which remains however partial. Recall that we use the labels $\underline{1},\dots \underline{i}\dots$ when the subtrees rooted at $O$ are labeled randomly. If the marginal distributions of the $\theta(\underline{i},\underline{i+1})$'s are easy to obtain, it is not the case for the distributions of the $\theta(i,i+1)$'s which necessitate the knowledge of the joint distributions of the asymptotic directions (or equivalently, the distribution of the sectors between the competition interfaces). Section \ref{section:conjectures} provides numerical simulations and conjectures.

\begin{prop}\label{prop_unif}
Conditionally on having $m$ infinite trees, and when the tree with color 1 is drawn randomly, the asymptotic directions $\theta(\underline{i},\underline{i+1})$ are uniformly distributed on $[0,2\pi)$.\\
Moreover the distribution of $\theta(i,j)$, on $\Omega(i,j)$, admits a density with respect to the Lebesgue measure on $[0,2\pi)$.
\end{prop}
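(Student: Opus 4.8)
The plan is to reduce both assertions to a single isotropy property of the finite point process of interface directions on the circle. Let $M$ denote the number of unbounded subtrees of $\mathcal{T}$ (so $M\leq 5$ a.s., and $M$ equals the number of unbounded competition interfaces as soon as $M\geq 2$, by Theorem \ref{arbresinfinis}), and introduce the random measure
$$
\Xi = \sum_{\substack{(i,j)\\ i\neq j}} \ind_{\Omega(i,j)}\,\delta_{\theta(i,j)}
$$
on $[0,2\pi)$, which charges exactly the asymptotic directions of the unbounded interfaces (well defined by Proposition \ref{prop:cvps}). The first step is to observe that $\Xi$ is rotation-equivariant: rotating $N$ by an angle $\phi$ rotates the RST, its colored subtrees and hence all the interfaces by $\phi$, so the \emph{set} of unbounded interface directions is sent to itself shifted by $\phi$. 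Since $N$ is isotropic, $\Xi$ is therefore equal in law to its image under the rotation of angle $\phi$, for every $\phi$. Consequently $\E[\Xi]$ is a rotation-invariant measure on $[0,2\pi)$ of finite total mass ($\leq 5$), whence $\E[\Xi]=c\,\lambda$, where $\lambda$ is the Lebesgue measure on $[0,2\pi)$ and $c\geq 0$ a constant.

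For the second assertion I would then simply dominate. For a fixed ordered pair of colours $(i,j)$, the map $A\mapsto \P(\Omega(i,j),\,\theta(i,j)\in A)$ is one of the summands defining $\E[\Xi]$, so
$$
\P\big(\Omega(i,j),\,\theta(i,j)\in A\big)\leq \E[\Xi](A)=c\,\lambda(A)
$$
for every Borel set $A$. This sub-probability measure is thus absolutely continuous with respect to $\lambda$; when $\P(\Omega(i,j))>0$, dividing by $\P(\Omega(i,j))$ shows that the law of $\theta(i,j)$ on $\Omega(i,j)$ has a density (bounded by $c/\P(\Omega(i,j))$) with respect to $\lambda$, which is the claim.

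For the first assertion I would work conditionally on $\{M=m\}$ with $m\geq 2$, an event which is itself rotation invariant, so the isotropy above persists: $\E[\Xi\mid M=m]$ is again a rotation-invariant measure, now of total mass $m$, hence equal to $\tfrac{m}{2\pi}\lambda$. It remains to identify the law of $\theta(\underline{i},\underline{i+1})$. Because the random labelling attaches i.i.d. uniform marks to the trees and, restricted to the infinite ones, these marks remain i.i.d. uniform, colour $\underline 1$ is a uniformly random infinite tree and the remaining infinite trees are labelled in the trigonometric order, independently of $N$. Hence, for each fixed $i$, $\theta(\underline i,\underline{i+1})$ is the direction of a uniformly chosen one among the $m$ unbounded interfaces, the choice being independent of their directions. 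Its conditional law given $\{M=m\}$ is therefore $\tfrac1m\,\E[\Xi\mid M=m]=\tfrac{1}{2\pi}\lambda$, i.e. the uniform distribution on $[0,2\pi)$.

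The delicate points, on which I would spend most of the care, are the rigorous justification of the isotropy of $\Xi$ — namely that the family of unbounded interface directions is a measurable, rotation-equivariant functional of $N$, and that conditioning on the rotation-invariant event $\{M=m\}$ leaves the argument intact — together with the precise matching between the random-labelling mechanism and the statement that the selected interface is uniform over the $m$ candidates and independent of their directions. Both rest on the fact, guaranteed by Theorem \ref{arbresinfinis} and the planar non-crossing structure, that $m$ unbounded subtrees are separated by exactly $m$ unbounded interfaces arranged cyclically around $O$; this cyclic structure is what makes \emph{shifting the starting label by a uniform amount} act as a uniform choice of interface. The analytic content beyond isotropy is minimal (domination of a non-negative measure by a multiple of Lebesgue), so the equivariance bookkeeping is the real work.
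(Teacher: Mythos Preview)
Your argument is correct and rests on the same isotropy principle as the paper, but you organise it differently. The paper proves the first assertion directly from rotation invariance (in one sentence), and then deduces the second assertion \emph{from the first}: it first notes, via Proposition~\ref{prop:<2}, that $\{\theta(i,j)=\alpha\}$ forces two semi-infinite paths in the deterministic direction $\alpha$ and hence has probability zero (no atoms); it then bounds $\P(\Omega(i,j),\theta(i,j)\in A)$ by a sum of probabilities $\P(\theta(\underline k,\underline l)\in A,\,M=m)$ over random labels, each of which vanishes when $\lambda(A)=0$ by the first assertion. You instead package everything through the intensity measure $\E[\Xi]$ and its conditional version $\E[\Xi\mid M=m]$, which lets you obtain absolute continuity of $\theta(i,j)$ by a one-line domination, independently of the first assertion and without invoking Proposition~\ref{prop:<2}. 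This is a genuine, if modest, streamlining: the paper's appeal to Proposition~\ref{prop:<2} is in fact redundant once absolute continuity is established, and your route also yields the explicit density bound $c/\P(\Omega(i,j))$. Conversely, the paper's path makes the logical dependence explicit (uniformity of the randomly-labelled directions $\Rightarrow$ absolute continuity for the fixed-label ones), which is conceptually natural even if slightly longer.
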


\begin{proof}
The first part results from invariance by translation and from the characterization of the Lebesgue measure on the circle as the unique measure invariant by any rotation.\\
The event $\{\theta(i,j)=\alpha\}$ implies the existence of at least two semi-infinite paths with the deterministic direction $\alpha$. This is forbidden by Proposition \ref{prop:<2}. So, $\theta(i,j)$ has no atom (when it exists). In fact, the distribution of $\theta(i,j)$ is even absolutely continuous with respect to the Lebesgue measure $\lambda$ on $[0,2\pi)$. Let $A$ be a measurable subset of $[0,2\pi)$ such that $\lambda(A)=0$. Let us denote by $M$ the random number of interfaces that exist. Then for $i\not=j$, since $\theta(i,j)$ corresponds to one of the $\theta(\underline{k},\underline{l})$ when we relabel the subtrees rooted at $O$ randomly:
\begin{multline*}
\P\big(\{\theta(i,j)\in A\}\cap \{M\geq 2\} \cap \Omega(i,j)\big)=  \sum_{m=2}^5 \P\big(\{\theta(i,j)\in A\}\cap \{M=m\}\cap \Omega(i,j)\big)\\
\begin{aligned}
\leq &  \P\big(\bigcup_{i\not= j\in \{1,\dots,m\}}\{\theta(\underline{i},\underline{j})\in A\}\cap \{M=m\}\cap \Omega(\underline{i},\underline{j})\big)\\
\leq  & \sum_{m=2}^5 \sum_{i\not= j\in \{1,\dots,m\}}\P\big(\{\theta(\underline{i},\underline{j})\in A\}\cap \{M=m\}\big)\leq 0,
\end{aligned}
\end{multline*}since $\lambda(A)=0$. Radon-Nikodym's theorem concludes the proof.
\hfill $\Box$ \end{proof}

We conclude this section by a corollary that states that the asymptotic directions of competition interfaces and of semi-infinite paths are related.
 \begin{cor}
 The asymptotic direction of the competition interface $\varphi(i,j)$ belongs to the (random) set $D$ of directions with at least two semi-infinite paths. This set is a.s. dense in $[0,2\pi)$ and countable.
 \end{cor}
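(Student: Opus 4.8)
The plan is to treat the three assertions in turn, observing that two of them follow readily from earlier results and that only the countability of $D$ demands a genuine argument. Density of $D$ in $[0,2\pi)$ is nothing but Part $(iii)$ of Theorem \ref{HN1}, the set considered there being exactly $D$. The membership $\theta(i,j)\in D$ is already contained in the proof of Proposition \ref{prop:cvps}: there the asymptotic direction $\theta(i,j)$ is identified with the common direction of the two semi-infinite paths $\overline{\gamma}_{X(i)}$ and $\underline{\gamma}_{X(j)}$, which lie respectively in the distinct colored subtrees $\mathcal{T}_{X(i)}$ and $\mathcal{T}_{X(j)}$ and are therefore two distinct semi-infinite paths with the same asymptotic direction. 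Hence $\theta(i,j)$ is a direction carrying at least two semi-infinite paths, i.e. $\theta(i,j)\in D$.

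It remains to prove that $D$ is a.s. countable. The plan is to exhibit $D$ as a subset of a countable set indexed by the vertices of $N$, namely the set of asymptotic directions of the topmost semi-infinite paths $\overline{\gamma}_{Y}$, where $Y$ ranges over the vertices of $N$ with unbounded subtree $\mathcal{T}_{Y}$. This set is a.s. countable since $N$ is. Fix $\theta\in D$ and pick two semi-infinite paths with direction $\theta$ that are not eventually equal; extending each backward along its ancestry to the root $O$ yields two distinct ancestral rays of the tree $\mathcal{T}$. Being distinct rays in a tree, they agree up to a last common vertex $X\in N$ and thereafter pass through two different children of $X$, say $X^{[a]}$ below and $X^{[b]}$ above in the trigonometric order used to define $\underline{\gamma}$ and $\overline{\gamma}$; both have unbounded subtrees.

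The key step is to show that $\theta$ coincides with the asymptotic direction of $\overline{\gamma}_{X^{[a]}}$, and this is where planarity and the non-crossing property (Lemma \ref{lemm:croisement}) are used. On one hand, the first of our two paths lies, beyond $X$, inside $\mathcal{T}_{X^{[a]}}$ and has direction $\theta$; since $\overline{\gamma}_{X^{[a]}}$ is by construction the highest semi-infinite path of $\mathcal{T}_{X^{[a]}}$, its direction is at least $\theta$. On the other hand, the second path lies inside $\mathcal{T}_{X^{[b]}}$ with $X^{[b]}$ strictly above $X^{[a]}$; because branches stemming from different children of $X$ cannot cross, every semi-infinite path of $\mathcal{T}_{X^{[a]}}$ stays below every semi-infinite path of $\mathcal{T}_{X^{[b]}}$, so the direction of $\overline{\gamma}_{X^{[a]}}$ is at most that of $\underline{\gamma}_{X^{[b]}}$, which is itself at most $\theta$. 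Combining the two bounds forces $\theta$ to equal the direction of $\overline{\gamma}_{X^{[a]}}$, placing $\theta$ in the announced countable set. The last clause of the statement then follows: $D$ is dense by Theorem \ref{HN1}$(iii)$ and countable by the above.

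The main obstacle is precisely this countability step, and inside it the angular comparison argument. One must justify rigorously, from Lemma \ref{lemm:croisement} and the planar embedding of $\mathcal{T}$, the monotonicity of asymptotic directions across the children of a fixed vertex, i.e. that for two children ordered trigonometrically the whole bundle of directions of the lower subtree sits below that of the higher one; this is what pins the shared direction $\theta$ to the single value $\mathrm{dir}(\overline{\gamma}_{X^{[a]}})$. A preliminary point worth fixing is the reading of ``at least two semi-infinite paths'': it should mean two rays that are not eventually equal, which is exactly what guarantees the existence of a genuine finite bifurcation vertex $X$ and hence the well-defined association used above.
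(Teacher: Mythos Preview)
Your proof is correct and follows essentially the same approach as the paper. For countability, the paper introduces the set $\Gamma$ of pairs $(\gamma_1,\gamma_2)$ of distinct semi-infinite paths with only finite paths in the region between them, injects $\Gamma$ into $N$ via the child of the bifurcation point lying on $\gamma_1$, and surjects $\Gamma$ onto $D$ using Parts~(i) and~(ii) of Theorem~\ref{HN1}; your version---showing directly that each $\theta\in D$ equals the direction of $\overline{\gamma}_{X^{[a]}}$ for some child $X^{[a]}$ of the bifurcation vertex---is a streamlined variant of the same idea, with the trapping argument you flag playing exactly the role of the paper's (implicit) surjectivity step.
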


 \begin{proof}The fact that $D$ is dense in $[0,2\pi)$ follows from Part $(iii)$ of Theorem \ref{HN1}. It is also a.s. countable. Indeed, let us consider the set $\Gamma$ of couples $(\gamma_{1},\gamma_{2})$ of different semi-infinite paths of the RST such that the region they delimit (in the trigonometric sense) contains only finite paths. Associating to each element $(\gamma_{1},\gamma_{2})$ of $\Gamma$ the child in $\gamma_{1}$ of their bifurcation point, we get an injective function from $\Gamma$ to the PPP $N$. Consequently, $\Gamma$ is a.s. countable. Moreover, Parts $(i)$ and $(ii)$ of Theorem \ref{HN1} allow to associate to each element $(\gamma_{1},\gamma_{2})$ of $\Gamma$ their common asymptotic direction. This provides a surjective function from $\Gamma$ onto the set $D$. Hence $D$ is a.s. countable.
\hfill $\Box$ \end{proof}

\section{Distribution of the $\theta(i,j)$'s and conjectures}\label{section:conjectures}

In this section, we provide some clues and conjectures that may help understanding the distribution of the vector $(\theta(1,2),\dots,\theta(m-1,m),\theta(m,1))$ of asymptotic directions of the interfaces, given that there are $m$ unbounded trees and assuming that the latter are labeled by following the trigonometric sense.
%In the sequel, we shall work conditionally on the number $m\in \{2,3,4,5\}$ of unbounded trees. We proved in Section \ref{section:coloredRST} the existence with positive probabilities of $m$ unbounded colored trees.

%We provide in Tab. \ref{fig3} the numerical computation of these probabilities and compare them to the probabilities that $O$ has $m$ offspring.
%Cases $m\in \{4,5\}$ are very rare (less than 2\% of the simulations) compared with Cases $m\in \{2,3\}$. The latter occur quite easily (more than 93\% of the simulations); the support of the angle between $\theta(1,2)$ and $\theta(2,1)$ is large and can give rise to various configurations. We believe that the cases $m\in \{4,5\}$ require less flexible configurations of points in the neighborhood of $O$ and imply less important deviations from the case where every tree is of equal width. This explains heuristically why the $\theta(j,j+1)'s$ cannot be independent.
%\subsection{Distribution of the colored sectors}

%We now turn to the question of the joint distribution of the asymptotic directions $\theta(i,j)$ conditionally on the number $m>1$ of unbounded subtrees.
For this purpose, it is equivalent to study the distribution of the sectors $(\phi(i+1):=\theta(i+1,i+2)-\theta(i,i+1),\, i\in \{1,\dots,m\})$ (with the convention that $\theta(m,m+1)=\theta(m,1)$ and $\theta(m+1,m+2)=\theta(1,2)$), which characterize the asymptotic width of the unbounded trees.
\begin{prop}\label{lemm:conj2}
Conditionally on having $m$ unbounded trees, the angles between two interfaces are identically distributed with expectation $2\pi/m$.
\end{prop}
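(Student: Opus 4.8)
The plan is to reduce the statement to two ingredients: (a) conditionally on $\{M=m\}$ (having exactly $m$ unbounded trees), the asymptotic sectors tile the circle, and (b) these $m$ sectors are exchangeable. First I would fix the geometry. On $\{M=m\}$, Proposition \ref{prop:cvps} guarantees that each of the competition interfaces separating two consecutive unbounded colored subtrees has an almost sure asymptotic direction; ordering these directions trigonometrically gives $\theta(1,2),\theta(2,3),\dots,\theta(m,1)$, and $\phi(i)=\theta(i,i+1)-\theta(i-1,i)$ (mod $2\pi$, with the cyclic convention of the section) is the asymptotic angular width of the $i$-th tree. Because every deterministic direction carries a semi-infinite path by Part $(ii)$ of Theorem \ref{HN1}, and such a path is trapped between the two interfaces bordering the colored cone it belongs to (by the non-crossing property, Lemma \ref{lemm:croisement}), the sectors neither overlap nor leave a gap. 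Hence they partition $[0,2\pi)$ and $\sum_{i=1}^{m}\phi(i)=2\pi$ almost surely on $\{M=m\}$. This tiling identity is the structural fact that pins the expectation.

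Next I would establish exchangeability of the $m$ unbounded trees, and this is where I would invoke the random labeling introduced just before Theorem \ref{arbresinfinis}: attach i.i.d. uniform marks to the children of $O$ and relabel the unbounded subtrees $\underline{1},\dots,\underline{m}$ by increasing mark. Since the marks are independent of $N$, this amounts to applying an independent uniform random permutation to the unordered family of $m$ trees, so the resulting ordered family — and in particular the family of asymptotic widths $\phi(\underline{1}),\dots,\phi(\underline{m})$ — is exchangeable conditionally on $\{M=m\}$. Rotation invariance of the PPP $N$ is what makes this relabeling legitimate, exactly as in the proof of Proposition \ref{prop_unif}. Exchangeability immediately yields that the widths are identically distributed, and, combined with the tiling identity, it gives the expectation: by linearity, $m\,\E[\phi(\underline{1})\mid M=m]=\E\big[\sum_{i}\phi(\underline{i})\mid M=m\big]=2\pi$, hence $\E[\phi(\underline{i})\mid M=m]=2\pi/m$.

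The hard part, and the step I would spend most care on, is reconciling the trigonometric labeling $\phi(i)$ with the randomly labeled sectors $\phi(\underline{i})$. Exchangeability is transparent for the random labeling, but the reference direction (the angle $0$) used by the trigonometric labeling introduces a size-biasing effect: for a merely rotation-invariant tessellation of the circle, the arc "first encountered after angle $0$" does not have the law of a typical arc, so rotation invariance alone does not give that $\phi(1),\dots,\phi(m)$ are identically distributed. Reflection invariance of $N$ does yield the partial symmetry $\phi(i)\stackrel{d}{=}\phi(m+1-i)$, which already settles the case $m=2$ but is insufficient for $m\ge 3$. I would therefore phrase the argument in terms of \emph{cyclic} exchangeability of the sector vector conditionally on $\{M=m\}$ — the property one genuinely needs — and derive the identical-distribution and expectation claims from it, the exchangeable random labeling being the cleanest device to package the required symmetry. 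A secondary, more routine point to check is that the sectors are genuinely well defined, i.e. that no unbounded interface degenerates and that the $m$ colored cones are nonempty; this rests on Proposition \ref{prop:cvps} together with the planarity and non-crossing property of the branches.
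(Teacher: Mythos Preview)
The paper states this proposition without proof in its conjectures section, so there is no explicit argument to compare against. Your two-step plan --- (a) the sectors partition $[0,2\pi)$, hence $\sum_i\phi(i)=2\pi$ almost surely on $\{M=m\}$, and (b) exchangeability of the randomly labeled family $(\phi(\underline{1}),\dots,\phi(\underline{m}))$ --- is the natural argument and is certainly what the authors have in mind. For the random labeling it is complete: identical marginals plus the tiling identity give $\E[\phi(\underline{i})\mid M=m]=2\pi/m$ by linearity.

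Your third paragraph is the genuinely interesting part, and you are right to flag it. Rotation invariance alone does \emph{not} directly yield identical marginals for the trigonometric labeling used in the paper's definition of the $\phi(i)$: rotating by a fixed angle induces a cyclic shift of the labels whose amount is determined by the angular gaps between consecutive \emph{roots} (children of $O$), so the shift is coupled to the configuration rather than uniformly random given $(\phi(1),\dots,\phi(m))$. Your observation that reflection only yields $\phi(i)\stackrel{d}{=}\phi(m+1-i)$ is correct and, as you say, settles $m=2$ but not $m\ge 3$. The paper simply does not address this distinction; the proposition sits in an informal discussion and the authors move on. So your choice to phrase the result for the randomly labeled sectors --- or equivalently to isolate cyclic exchangeability as the property one actually needs and to be explicit that it requires justification beyond bare isotropy --- is not a gap in your argument but rather a point on which you are more careful than the paper itself.
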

Notice first that this rules out the possibility that the asymptotic directions $\theta(i,j)$'s are independent uniform r.v. on $[0,2\pi)$. Else, the distributions of the sectors would be Beta distributions $\mathbf{B}(1,m)$ which expectation is $2\pi/(m+1)$. There is thus interaction between the $\theta(i,j)$'s.\\

\par Our conjecture is as follows:
\begin{conj}\label{conj1}Conditionally on $m\in \{2,3,4,5\}$, the vector $(\phi(1),\dots,\phi(m))$ has a distribution close to a symmetric Dirichlet distribution of order $m$ on $[0,2\pi)$ with parameter $\alpha\not= 1$, $\mbox{Dir}(m,[0,2\pi),\alpha)$.
\end{conj}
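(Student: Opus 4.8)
The plan is to attack the statement in two stages: first to constrain the joint law of the sector vector $(\phi(1),\dots,\phi(m))$ to the symmetric Dirichlet family by exploiting the symmetries of the colored RST together with a characterization of Dirichlet laws, and then to identify the parameter $\alpha$ from a second moment. I emphasize at the outset that a fully rigorous identification is almost certainly beyond the tools available here --- this is exactly why the statement is a \emph{conjecture} qualified by ``close to'' --- for the reason stressed in the introduction: the local selection rule \eqref{defRST} induces long-range spatial dependences that obstruct any exact computation of joint laws. What follows is therefore a program whose initial steps are rigorous and whose final step concentrates the real difficulty.

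First I would record the structural constraints that the target law must obey and that the Dirichlet family does obey. Under the random labeling convention ($\underline 1,\dots,\underline m$), the invariance of the PPP $N$ by rotation and translation, together with the a.s. uniqueness of the semi-infinite path in each deterministic direction (Proposition \ref{prop:<2}), make $(\phi(1),\dots,\phi(m))$ \emph{exchangeable}. Its coordinates are positive and sum to $2\pi$, so its law is carried by the scaled simplex $\{\sum_i\phi(i)=2\pi,\ \phi(i)>0\}$, and Proposition \ref{lemm:conj2} fixes the common marginal mean to $2\pi/m$. The symmetric $\mathrm{Dir}(m,[0,2\pi),\alpha)$ laws form a natural one-parameter family of exchangeable laws on this simplex, which already explains why it is the natural candidate; note that each such law has mean $2\pi/m$ for every $\alpha$, so the mean alone does not discriminate between them.

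To single out the Dirichlet family among exchangeable laws on the simplex I would target the classical neutrality characterization: a law on the open simplex is Dirichlet precisely when one coordinate, say $\phi(1)/2\pi$, is independent of the vector of the remaining coordinates renormalized to sum to one, and this holds recursively. In the colored RST this independence has a transparent geometric reading --- conditionally on the asymptotic angular width $\phi(1)$ of one colored subtree, the \emph{relative} widths of the other colored subtrees should carry no extra information about $\phi(1)$. Color merging provides a convenient handle here: repainting two adjacent subtrees $\T(i)$ and $\T(i+1)$ with a single color erases the interface $\varphi(i,i+1)$ and fuses $\phi(i)+\phi(i+1)$ into one sector, while the underlying process $N$ and the tree $\mathcal{T}$ are untouched, so under the Dirichlet hypothesis the fused sectors must again be Dirichlet, now with parameters $(2\alpha,\alpha,\dots,\alpha)$. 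Verifying these conditional independences (equivalently, this merged-parameter consistency) is the \emph{main obstacle}: it is exactly what the one-dimensional symmetries behind Propositions \ref{prop:<2} and \ref{prop_unif} cannot reach, and it demands genuine control of the joint law of several interfaces --- control denied by the absence of any Markov structure in the RST.

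Granting the Dirichlet form, the final step identifies $\alpha$ and excludes $\alpha=1$. For a symmetric $\mathrm{Dir}(m,[0,2\pi),\alpha)$ vector one has
\[
\Var\big(\phi(i)\big)=4\pi^{2}\,\frac{m-1}{m^{2}\,(m\alpha+1)},
\]
so that $\alpha$ is determined by a single sector variance,
\[
\alpha=\frac{1}{m}\left(\frac{4\pi^{2}(m-1)}{m^{2}\,\Var(\phi(i))}-1\right).
\]
The value $\alpha=1$ is the uniform law on the simplex --- equivalently, the circular spacings generated by $m$ independent uniform interface directions --- for which $\Var(\phi(i))=4\pi^{2}(m-1)/\big(m^{2}(m+1)\big)$; excluding it therefore amounts to showing that the true sector variance differs from this value, which is the quantitative form of the interaction between the $\theta(i,j)$'s already noted above. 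The genuinely open point is the exact evaluation, or even a rigorous two-sided estimate, of $\Var(\phi(i))$ from the geometry of the RST; I expect no closed form to exist, so I would complement the structural program above with the Monte-Carlo estimates of Section \ref{section:conjectures} to read off $\alpha$ empirically and to test the Dirichlet shape --- which is precisely the sense in which the result is asserted as a conjecture rather than a theorem.
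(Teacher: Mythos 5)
The statement you were given is Conjecture \ref{conj1}; the paper itself offers no proof of it and does not claim one. Its entire support consists of Proposition \ref{lemm:conj2} (the sectors are identically distributed with mean $2\pi/m$, obtained from rotational symmetry), the remark that the interface directions cannot be independent, and the Monte-Carlo calibration of Section \ref{section:conjectures}. Your proposal is honest about exactly this, and its rigorous ingredients coincide with the paper's: symmetry of the sector vector, support on the scaled simplex, mean $2\pi/m$ via Proposition \ref{lemm:conj2}, moment identification of $\alpha$, and an empirical fallback. Your added value is the neutrality/aggregation program, which is a correct formalization of what a genuine proof would need and does not appear in the paper. Your moment computations are right: for symmetric $\mathrm{Dir}(m,[0,2\pi),\alpha)$ one indeed has $\Var(\phi(i))=4\pi^{2}(m-1)/(m^{2}(m\alpha+1))$, and your identification of $\alpha=1$ with the circular spacings of $m$ i.i.d.\ uniform directions (mean $2\pi/m$, variance $4\pi^{2}(m-1)/(m^{2}(m+1))$) is in fact more accurate than the paper's own parenthetical comparison with a $\mathbf{B}(1,m)$ law of mean $2\pi/(m+1)$, which is incompatible with the mean forced by Proposition \ref{lemm:conj2}; ruling out $\alpha=1$ does require a variance or test argument, as the paper's likelihood-ratio test for $m=2$ illustrates.

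Two concrete gaps in your program, though. First, your opening exchangeability claim is too strong: isotropy and reflection invariance of $N$ only make the trigonometrically ordered sector vector invariant under the dihedral group (cyclic shifts and order reversal). This equals full exchangeability for $m\leq 3$ but not for $m\in\{4,5\}$, where adjacent sectors may be correlated differently from non-adjacent ones, while the symmetric Dirichlet law is fully exchangeable; uniform random relabeling trivially symmetrizes any vector but destroys the adjacency structure through which $\phi(i)$ is defined, so it is not a substitute. Second, and more importantly, your neutrality step, if it could be carried out, would establish an \emph{exact} Dirichlet law --- but the paper's own simulations reject exactly that: the KS tests on the marginals and the $\chi^{2}$ test on the joint law for $m=3$ reject the Beta/Dirichlet fit with p-values below $2.2\times 10^{-16}$ (Figure \ref{fig5} and the ensuing discussion), which is precisely why the conjecture is hedged as ``close to''. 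So the neutrality consistency should be expected to \emph{fail}, not merely to be hard to verify; a viable program would have to quantify the deviation from neutrality (an approximate Dirichlet statement) rather than verify it. With these two corrections, your proposal is a fair account of why the statement remains a conjecture, and it matches the paper's actual evidence where it is rigorous.
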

Symmetric Dirichlet distributions of order $m$ and parameter $\alpha>0$ on $[0,2\pi)$ are probability distributions on $\R^m$ with a support in $\Lambda=\{\eta=(\eta_1,\dots,\eta_m)\in \R^m,\, \sum_{i=1}^m \eta_i=2\pi\}$ and with the following density with respect to the Lebesgue measure on $\Lambda$:
$$f(\eta_1,\dots,\eta_m ; \alpha)=\frac{1}{\mathbf{B}(\alpha)}\prod_{i=1}^{m}\Big(\frac{\eta_i}{2\pi}\Big)^{\alpha-1},\qquad \mbox{ where }\mathbf{B}(\alpha)=\frac{\Big(\int_0^{+\infty}t^{\alpha-1}e^{-t}dt\Big)^m}{\int_0^{+\infty}t^{m\alpha-1}e^{-t}dt}$$
is the Beta function. If we had a Dirichlet distribution conditionally on $m$, the marginal distribution of the exchangeable sectors would be a Beta distribution $\mathbf{B}(\alpha,(m-1)\alpha)$ on $[0,2\pi)$ with expectation $2\pi/m$.
This would also show that the distributions of the asymptotic directions $\theta(i,j)$'s depend only on the number $m$ of unbounded trees and not on the number of offspring of $O$, which is a local phenomenon that is forgotten at large radii.\\

Let us illustrate the conjecture with simulations. We compute the angle between two interfaces and calibrate Beta distributions. Whereas there are no closed form for the maximum-likelihood estimates, the following moment estimates are as follows:
\begin{align}
\widehat{\alpha}= & \frac{\bar{x}}{2\pi}\Big(\frac{\bar{x}(2\pi-\bar{x})}{\mbox{Var}(x)}-1\Big),\qquad \qquad
\widehat{\beta}=  \frac{2\pi - \bar{x}}{2\pi}\Big(\frac{\bar{x}(2\pi-\bar{x})}{\mbox{Var}(x)}-1\Big).
\end{align}
The different densities and the associated Beta approximations are given in Fig. \ref{fig5}.

\begin{figure}[ht]
\begin{center}
\begin{tabular}{ccc}
(a) & (b)  & (c) \\
\includegraphics[width = 0.3\textwidth,trim=0cm 0cm 0cm 0cm]{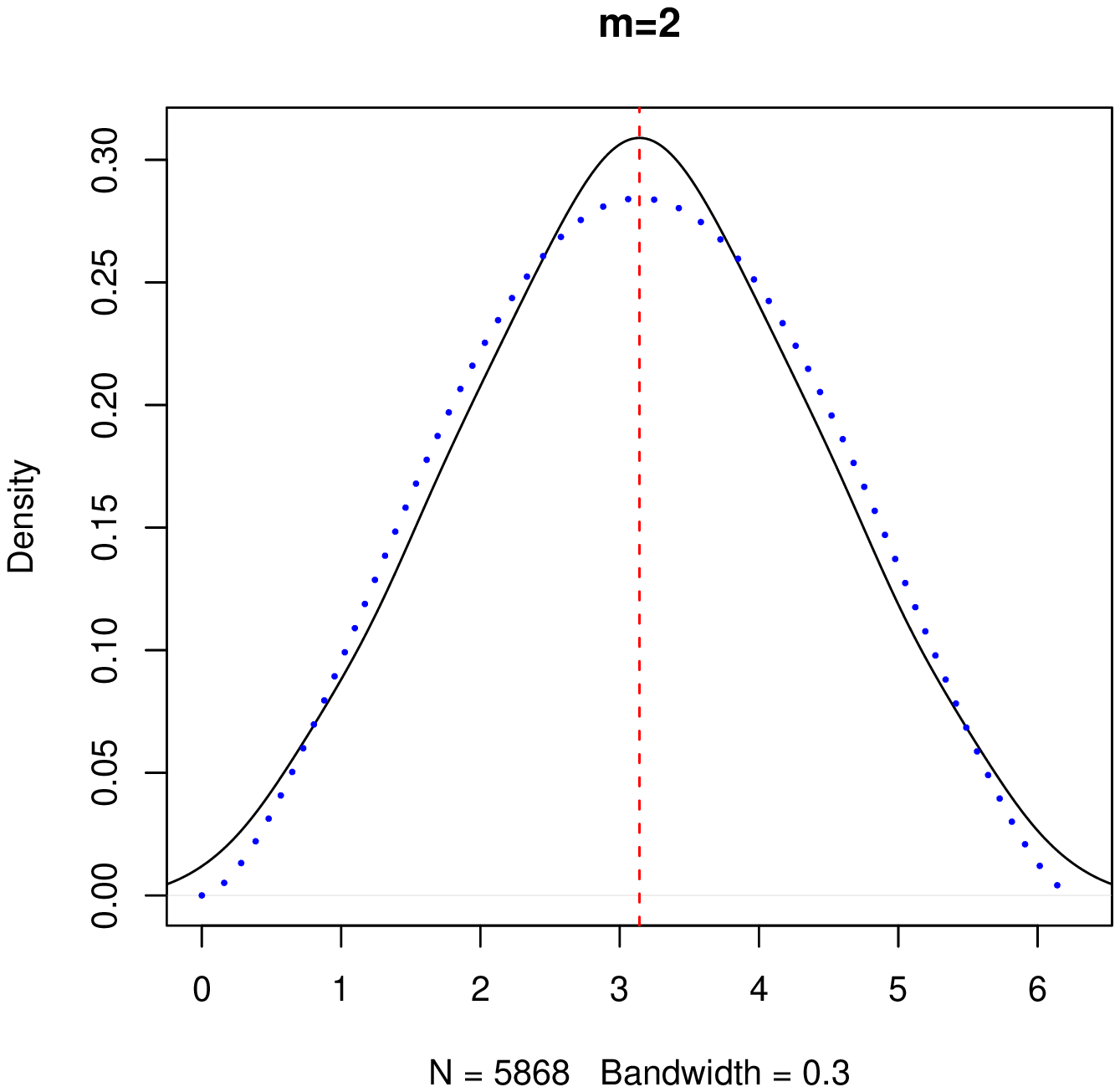} &
\includegraphics[width = 0.3\textwidth,trim=0cm 0cm 0cm 0cm]{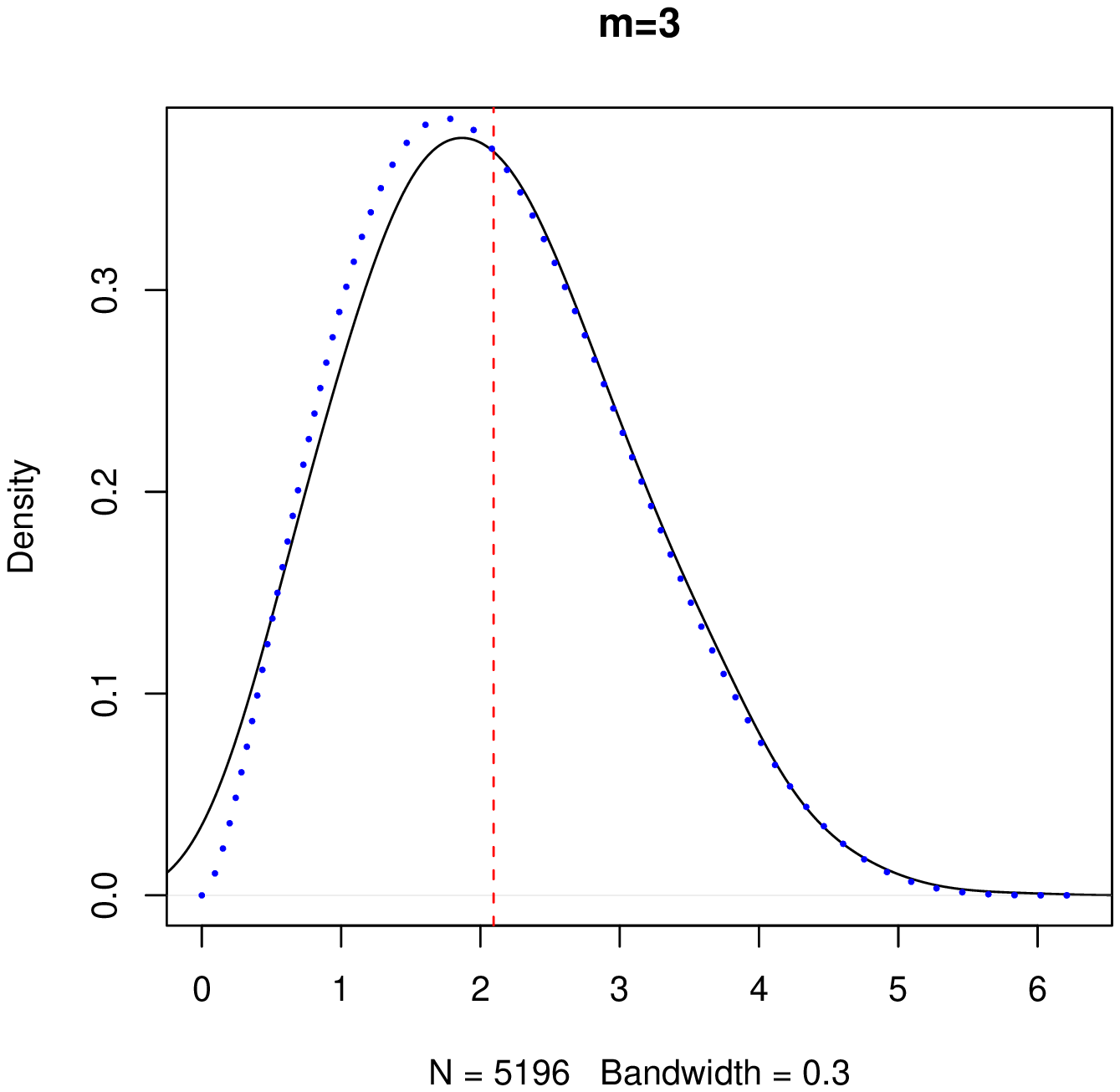} &
\includegraphics[width = 0.3\textwidth,trim=0cm 0cm 0cm 0cm]{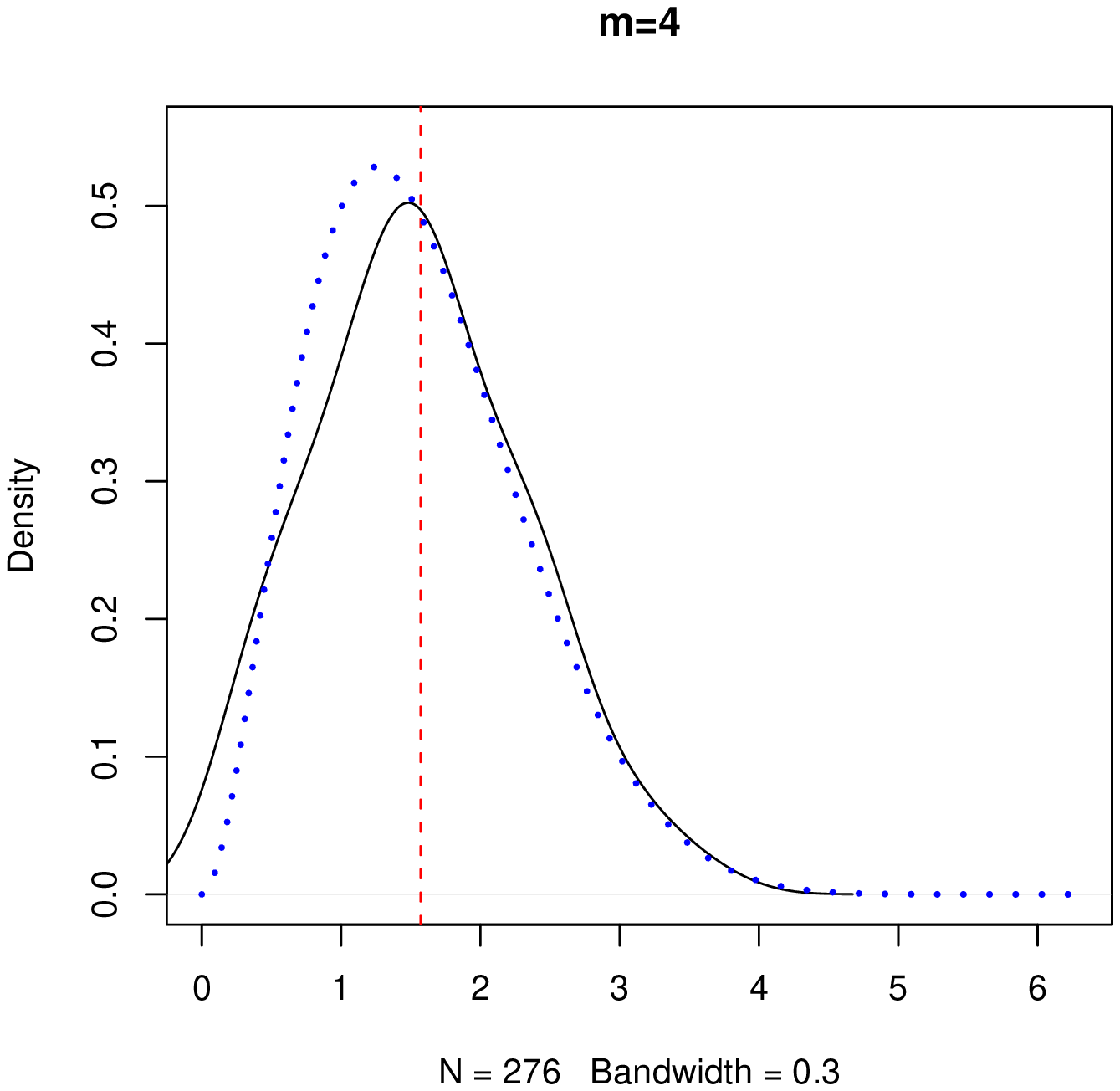} \\
\end{tabular}\end{center}\vspace{-0.5cm}
\caption{{\small \textit{Density estimation for the angles between two interfaces, given the number $m$ of unbounded trees. Given $m$, the distribution concentrates around $2\pi/m$ (red dashed line) and has a smaller variance for greater $m$'s. Calibration with Beta distributions have been carried (blue dotted thick line), but KS test (with test statistic $D$) rejects the null hypothesis of Beta distribution $B(\widehat{\alpha},(m-1)\widehat{\beta})$ in the three cases with p-values smaller than $2.2e-16$ although the distributions look similar graphically. (a) $\widehat{\alpha}=\widehat{\beta}=2.74$, $D = 96\%$; (b) $\widehat{\alpha}=2.69$, $\widehat{\beta}=5.38$, $D = 91.77\%$; (c) $\widehat{\alpha}=2.99$, $\widehat{\beta}=8.99$, $D = 88.79\%$.
}}}
\label{fig5}
\end{figure}

We discuss the case $m=2$ and $m=3$ for which a sufficiently large number of simulations are done to perform statistical tests.\\

\noindent \fbox{$\mathbf{m=2}$} In this case, the joint law of $(\theta(1,2),\theta(2,1))$ is completely described by the distribution of one of the two sectors, say $\phi(1)$. Conditionally on the first interface $\theta(1,2)$, we can wonder whether the other interface is uniformly and independently distributed, \ie whether $\phi(1)$ is a uniform r.v. on $[0,2\pi]$. Testing $H_0$ : $\alpha=\beta=1$ with a likelihood-ratio test provides a test statistic of 2274.93 which leads us to reject the null assumption and hence the independence between the asymptotic direction of the two interfaces. We can easily been convinced of this by looking at Fig. \ref{fig5} (a). As a consequence, the asymptotic directions $\theta(1,2)$ and $\theta(2,1)$ are not independent.\\

\noindent \fbox{$\mathbf{m=3}$} In this case, we performed a $\chi^2$-test for testing the adequation of the joint distribution of the sectors to a Dirichlet distribution. Since the sum of the sectors is equal to $2\pi$, we consider the couple $(\phi(1),\phi(2))$. With our simulations, the $\chi^2$-test statistic is equal to 176.49 and the adequation with the Dirichlet distribution is rejected. However, we can see that as conjectured, the simulated sample looks like a simulated sample from a Dirichlet distribution.

\appendix
\section{Appendix: non-crossing property for the paths of the RST}

\begin{lem}
\label{lemm:croisement}
Any two paths $\gamma$ and $\gamma'$ of the RST (finite or not) cannot cross:
$$
\forall X \in \gamma , \; \forall X' \in \gamma' , \; (X,\A(X)) \cap (X',\A(X')) = \emptyset
$$
(where $(a,b)$ denotes the segment $[a,b]$ in $\mathbb{R}^{2}$ without its endpoints).
\end{lem}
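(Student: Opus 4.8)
The plan is to reduce the statement to two single edges and to rule out their crossing. Since every path of the RST is a concatenation of edges $[\A(X),X]$, two paths cross if and only if two such edges cross; so it suffices to show that almost surely no two edges $e=[A,X]$ and $e'=[A',X']$ (with $A=\A(X)$, $A'=\A(X')$) meet at a point $P$ interior to both segments. I would work on the full-measure event that the points of $N$ are in general position (pairwise distinct moduli, no three collinear), so that all the inequalities below are strict. Assume such a crossing exists and, without loss of generality, that $|X|\le|X'|$; then $X'$ has the largest modulus among $A,X,A',X'$, and in particular $A,X\in B(O,|X'|)$.

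First I would extract two elementary inequalities from the defining minimality of the ancestor. Since $A'=\A(X')$ is the point of $N\cap B(O,|X'|)$ closest to $X'$ and both $A$ and $X$ lie in that ball, we get $|X'-A|\ge r'$ and $|X'-X|\ge r'$ with $r'=|X'-A'|$. Writing $d_1=|P-A|$, $d_2=|P-X|$, $d_1'=|P-A'|$, $d_2'=|P-X'|$ and using that $P$ lies on both segments, the law of cosines turns these into $d_1^2+2d_1d_2'c\ge d_1'^2+2d_1'd_2'$ and $d_2^2-2d_2d_2'c\ge d_1'^2+2d_1'd_2'$, where $c$ is the cosine of the crossing angle. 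Multiplying the first by $d_2$, the second by $d_1$ and adding cancels $c$ and yields the clean bound
\[
(\star)\qquad d_1d_2\ \ge\ d_1'^2+2d_1'd_2'\,.
\]
A triangle inequality through $P$ then gives $|X-A'|\le d_2+d_1'<d_2+d_1=|X-A|$, so $A'$ is strictly closer to $X$ than its own ancestor $A$.

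It remains to check that $A'$ is an \emph{admissible} ancestor of $X$, i.e. $A'\in B(O,|X|)$, equivalently $|A'|<|X|$; this is the real crux. Placing the origin at $O$ and writing $p=\langle P,\hat u\rangle$, $q=\langle P,\hat v\rangle$ for the projections of $P$ on the unit directions $\hat u$ of $e$ (towards $X$) and $\hat v$ of $e'$ (towards $X'$), the ancestor relations $|A|<|X|$ and $|A'|<|X'|$ become $p>(d_1-d_2)/2$ and $q>(d_1'-d_2')/2$. Substituting into $|X|^2-|A'|^2=2d_2p+d_2^2+2d_1'q-d_1'^2$ gives $|X|^2-|A'|^2>d_1d_2-d_1'd_2'$, which is nonnegative by $(\star)$. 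Hence $|A'|<|X|$, so $A'\in N\cap B(O,|X|)$ is strictly closer to $X$ than $A=\A(X)$, contradicting the definition of the ancestor and proving the lemma.

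The step I expect to be delicate is precisely the admissibility $|A'|<|X|$. In the directed spanning forest the analogous quantity is the drift coordinate, which is linear along each edge, so the inclusion of $A'$ in the relevant half-plane is immediate from the ordering $A'_x\le P_x\le X_x$. For the RST the relevant coordinate is the modulus $|\cdot|$, which is only convex, not linear, along a segment; the edge $e'$ may therefore ``dip'' in modulus below its ancestor endpoint, so $|A'|<|X|$ cannot be read off the crossing alone and genuinely requires the quantitative inequality $(\star)$ together with the two projection bounds. Everything else is bookkeeping on strict inequalities, valid on the general-position event of full probability.
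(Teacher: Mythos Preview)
Your argument is correct, though one inequality is asserted without justification: in the line ``$|X-A'|\le d_2+d_1'<d_2+d_1=|X-A|$'' you need $d_1'<d_1$, and this does not follow from $(\star)$ alone. It does follow immediately from the ancestor rule at $X'$: since $A\in N\cap B(O,|X'|)$ we have $|X'-A'|<|X'-A|\le d_2'+d_1$, i.e.\ $d_1'+d_2'<d_2'+d_1$. With this one line added, the proof is complete; the law-of-cosines derivation of $(\star)$ and the projection bound yielding $|X|^2-|A'|^2>d_1d_2-d_1'd_2'>0$ are both fine.

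Your route is genuinely different from the paper's. The paper splits on whether $|A'|<|X|$ or $|A'|>|X|$ (in your notation). In the first case it runs the classical ``crossed matching is shorter'' contradiction: $|X-A|+|X'-A'|<|X-A'|+|X'-A|\le |X-A|+|X'-A'|$ via the triangle inequality through the crossing point. In the second case it argues geometrically that the edge $[A',X']$ must cross $S(O,|X|)$ twice, so one of the two caps it cuts off lies inside $B(X',|X'-A'|)$ and contains either $X$ or $A$, contradicting minimality. Your approach trades this case split and the picture for a single computation: you never assume $|A'|<|X|$ but instead \emph{prove} it from $(\star)$ and the projection identities, reducing everything to the first case. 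The paper's proof is shorter and more visual; yours is uniform and makes explicit the quantitative slack $d_1d_2-d_1'd_2'>0$, which is a nice byproduct.
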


\begin{proof}
Let us assume there exists a point $I$ belonging to both $(X,\A(X))$ and $(Y,\A(Y))$. It is easy to check that this assumption and the construction rule of the RST force $X, Y, \A(X)$ and $\A(Y)$ to be four different points. The same is true for their Euclidean norms with probability one. Moreover, without loss of generality, we can also assume that $|Y|<|X|$. Then, two cases can be distinguished.

\noindent \textit{First case:} If $|\A(X)|<|Y|$ then $Y$ is closer to $\A(Y)$ than $\A(X)$: $|\A(Y) - Y| < |\A(X) - Y|$.
In the same way, the inequality $|\A(Y)|<|Y|<|X|$ implies $
|\A(X) - X| < |\A(Y) - X|$. Now, the triangular inequality leads to a contradiction:
\begin{eqnarray*}
|\A(Y) - Y| + |\A(X) - X| & < & |\A(X) - Y| + |\A(Y) - X| \\
& < & |\A(X) - I| + |I - Y| + |\A(Y) - I| + |I - X| \\
& < & |\A(Y) - Y| + |\A(X) - X| ~.
\end{eqnarray*}

\noindent \textit{Second case:} We now assume that $|Y|<|\A(X)|$ and refer to Fig \ref{fig:croisement2}.
The points $X$ and $\A(X)$ do not belong to the open ball $B(O,|Y|)$ which contains $\A(Y)$ by definition. Hence the existence of the point $I$ forces the segment $(X,\A(X))$ to intersect $S(O,|Y|)$ at two distinct points, say $T_1$ and $T_2$, dividing the closed ball $\overline{B}(O,|Y|)$ in two non overlapping sets, say $U$ and $V$. By hypothesis, each of these two sets contains (exactly) one of the two points $Y$ and $\A(Y)$. Since $|T_1-X|$ and $|T_2-X|$ are smaller than $|X-\A(X)|$ by construction, one of the regions $U$ or $V$ is included in the ball $B(X,|X-\A(X)|)$. So, one of the two points $Y$ and $\A(Y)$ belongs to the ball $B(X,|X-\A(X)|)$. This contradicts the fact that $\A(X)$ is the ancestor of $X$.
\begin{figure}[!ht]
\begin{center}
\psfrag{x}{\small{$X$}}
\psfrag{y}{\small{$Y$}}
\psfrag{o}{\small{$O$}}
\psfrag{I}{\small{$I$}}
\psfrag{Ay}{\small{$\A(Y)$}}
\psfrag{S}{\small{$S(O,|Y|)$}}
\psfrag{SS}{\small{$S(X,|X\!-\!\A(X)|)$}}
\psfrag{t1}{\small{$T_{1}$}}
\psfrag{t2}{\small{$T_{2}$}}
\psfrag{Ax}{\small{$\A(X)$}}
\includegraphics[width=8cm,height=5.5cm]{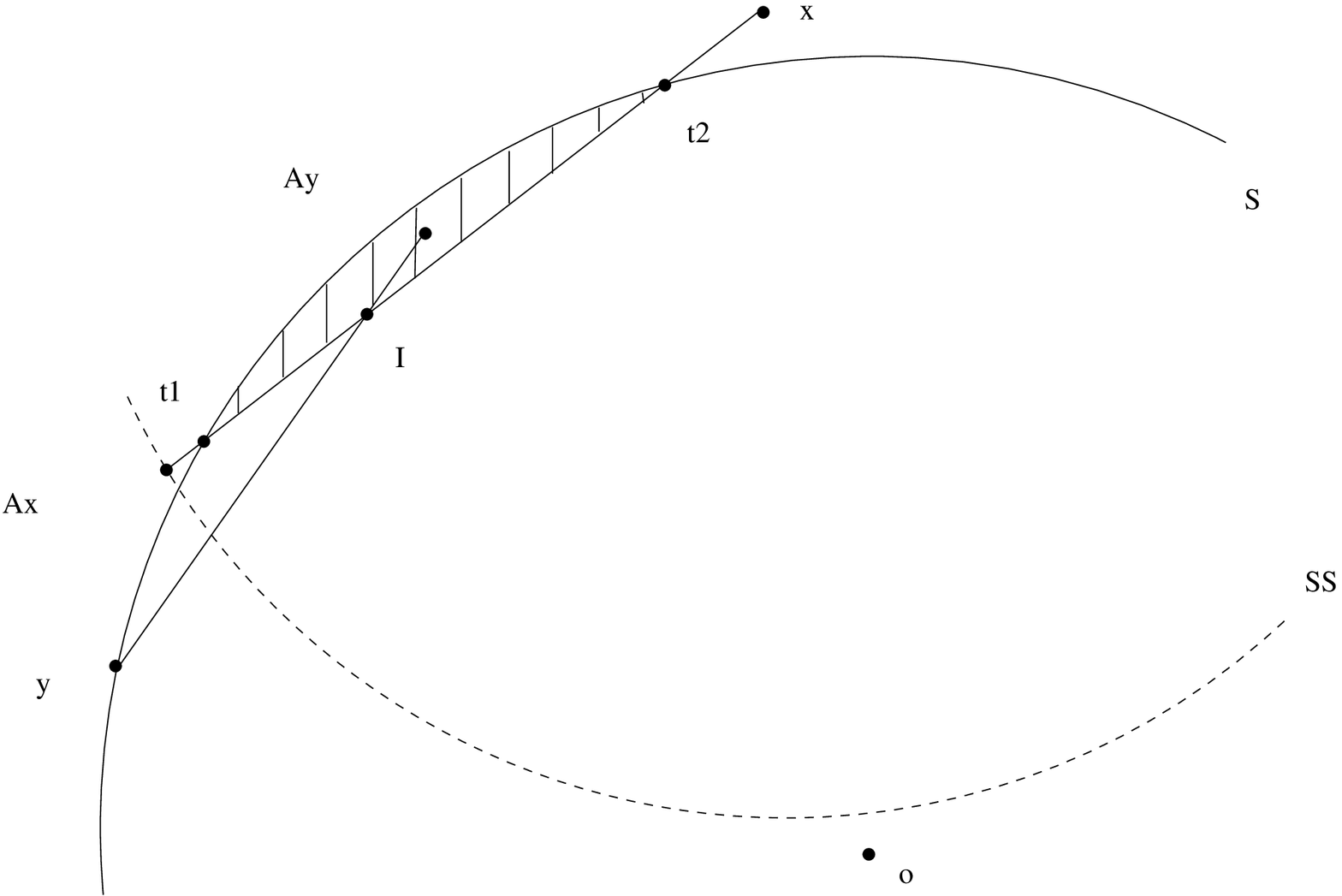}
\end{center}
\caption{\label{fig:croisement2} {\small \textit{The hatched area corresponds to the one of the two sets $U$ and $V$ which is included in the ball $B(X,|X-\A(X)|)$. Here, it contains $\A(Y)$. Besides, let us remark the origin $O$ cannot belong to the ball $B(X,|X-\A(X)|)$.}}}
\end{figure}
\hfill $\Box$ \end{proof}

%\Fig{Caption.}

\noindent \textbf{Acknowledgments:} This work has been financed by the GdR 3477 \textit{G\'eom\'etrie Stochastique}. The authors thank the members of the "Groupe de travail G\'{e}om\'{e}trie Stochastique" of Universit\'{e} Lille 1 and J.-B. Gou\'er\'e for enriching discussions.

% Reference list
%
% References should be in the following form (or the BibTeX file
% apt.bst should be used):
%
% For a journal:
% Surname, Initial (year). Title of paper. {\em Journal title}
% {\bf Vol,} page--range.
%
% For a book:
% Surname, Initial (year). {\em Book title}. Publisher, Address.
%
% Note the following example of a reference list.

%{\footnotesize \bibliographystyle{plain}
%\bibliography{biblio}}

%\end{document}

\providecommand{\noopsort}[1]{}

\end{document}